\newtheorem{satz}{Theorem}[section]
\newtheorem{lemma}[satz]{Lemma}
\newtheorem{kor}[satz]{Corollary}
\theoremstyle{definition}
\newtheorem{bemerkung}[satz]{Remark}
\definecolor{white}{rgb}{1,1,1}
\definecolor{darkred}{rgb}{0.3,0,0}
\definecolor{darkgreen}{rgb}{0,0.3,0}
\definecolor{darkblue}{rgb}{0,0,0.3}
\definecolor{pink}{rgb}{0.78,0.09,0.51}
\definecolor{purple}{rgb}{0.28,0.24,0.55}
\definecolor{orange}{rgb}{1,0.6,0.0}
\definecolor{grey}{rgb}{0.4,0.4,0.4}
\definecolor{aquamarine}{rgb}{0.4,0.8,0.65}
\newcommand{\N}{\mathbb{N}}
\newcommand{\R}{\mathbb{R}}
\newcommand{\norm}[2]{\left\Vert #2 \right\Vert_{L^{#1}(\Omega)}}
\newcommand{\wnorm}[1]{\left\Vert #1 \right\Vert_{W^{1,q}(\Omega)}}
\newcommand{\Norm}[2]{\left\Vert #2 \right\Vert_{#1}}
\newcommand{\tel}[1]{\frac{1}{#1}}
\newcommand{\diff}{\mathop{}\!\mathrm{d}}
\newcommand{\dt}{\frac{\diff}{\diff t}}
\newcommand{\io}{\int_\Omega}
\newcommand{\kreuz}{\nabla\cdot\left(\frac{u}{v}\nabla v\right)}
\newcommand{\euler}{\mathrm{e}}
\newcommand{\ubar}[1]{\underaccent{\bar}{#1}}
\newcommand{\uunten}{\ubar{u}}
\newcommand{\uoben}{\bar{u}}
\newcommand{\uvk}{\underaccent{\bar}{v}_k}
\newcommand{\Tmax}{T_{max}}
\newcommand{\Om}{\Omega}
\newcommand{\Ombar}{\overline{\Omega}}
\newcommand{\Lom}[1]{L^{#1}(\Omega)}
\newcommand{\f}[2]{\frac{#1}{#2}}
\newcommand{\set}[1]{\{#1\}}
\newcommand{\kl}[1]{\left(#1\right)}
\newcommand{\na}{\nabla}
\newcommand{\nn}{\nonumber}
\newcommand{\inttnullt}{\int_{(t-1)_+}^t}
\title{Classical solutions to a logistic chemotaxis model with singular sensitivity and signal absorption}
\author{Elisa Lankeit\\
 {\small Institut f\"ur Mathematik, 
 Universit\"at Paderborn}\\[-0.2cm]
 {\small  Warburger Str. 100}\\[-0.2cm]
 {\small  33098 Paderborn, Germany}\\
{\small elankeit@math.uni-paderborn.de }
 \and
Johannes Lankeit\\
 {\small Institut f\"ur Mathematik, 
 Universit\"at Paderborn}\\[-0.2cm]
 {\small  Warburger Str. 100}\\[-0.2cm]
 {\small  33098 Paderborn, Germany} \\
{\small jlankeit@math.uni-paderborn.de}
}
\begin{document}
\maketitle 

\begin{abstract}
\noindent Assuming that $0<\chi<\sqrt{\frac{2}n}$, $\kappa\ge 0$ and $\mu>\frac{n-2}{n}$, we prove global existence of classical solutions to a chemotaxis system slightly generalizing 
\[ 
\begin{cases} u_t = \Delta u - \chi \nabla\cdot \big( \frac{u}{v} \nabla v \big) + \kappa u -\mu u^2 \\
 v_t = \Delta v - u v \end{cases} 
\]
in a bounded domain $\Omega \subset \mathbb{R}^n$, with homogeneous Neumann boundary conditions and for widely arbitrary positive initial data.  
In the spatially one-dimensional setting, we prove global existence and, moreover, boundedness of the solution for any $χ>0$, $μ>0$, $κ\ge 0$.\\

\noindent\textbf{Keywords:} chemotaxis; classical solution; singular sensitivity; signal consumption; global existence; boundedness; logistic source\\
\noindent\textbf{MSC (2010):} 35Q92; 35K51; 35A01; 92C17
\end{abstract}

\section{Introduction}

In chemotaxis systems with singular sensitivity, signal evolution being gouverned by a consumptive equation can make even the global existence analysis challenging. One particular system of the mentioned type is the following, 
\begin{align}\label{intro:sys1}
 u_t&=Δ u - \chi\nabla\cdot\left(\frac{u}{v}\nabla v\right)+f(u),\\
 v_t&=Δ v - uv\nn,
\end{align}
a slightly more general form of which with $f\equiv 0$ has been introduced in \cite{kellersegel_trav} by Keller and Segel in order to capture the behaviour of \textit{Escherichia coli} (with population density $u$) set on a substrate containing varying amounts of oxygen and an energy source. Bacteria of this species are chemotactically active and partially direct their movement toward higher concentration of the ``signal'' substance (
whose concentration is given by $v$) -- in accordance with the Weber--Fechner law of stimulus perception (see \cite{kellersegel_trav}), the direction and intensity of this movement are given by the gradient of the logarithm of the signal concentration (with a proportionality constant $χ>0$). 

Intuitively, we can imagine that the second equation of \eqref{intro:sys1} pushes $v$ toward zero, whereas in the first equation $v$ being small is exactly what boosts 
the (destabilizing) effects of the cross-diffusive term modelling the chemotaxis.

This is different from the 
more commonly studied chemotaxis systems with signal production, like 
\begin{align}\label{intro:sys-prod}
 u_t&=Δ u - \chi\nabla\cdot\left(\frac{u}{v}\nabla v\right)+f(u),\\
 v_t&=Δ v - v + u\nn,
\end{align}
(see \cite{Biler99, win-ge, fujie, masaakitomomi,newapproach, Zhao_sensitivity,wang-li-yu}), or from the system 
\begin{align} 
\label{intro:sys-classical}  u_t&=Δ u - \chi\nabla\cdot\left(u\nabla v\right)+f(u),\\
 v_t&=Δ v - v + u\nn,
\end{align}
the latter for $f\equiv 0$ also being known as ``the'' classical Keller--Segel system (\cite{kellersegel_slime,horstmann})
. Here the signal is assumed to be produced by the bacteria themselves and the corresponding source term $+u$ in the second equation tends to keep $v$ away from the singularity in the first equation of \eqref{intro:sys-prod}; and \eqref{intro:sys-classical} does not contain such a singularity in the sensitivity function at all. 

As to \eqref{intro:sys-prod} with $f\equiv 0$, it has been shown that the form of taxis inhibition at large densities of the signal 
can prevent blow-up of solutions \cite{Biler99,win-ge} and even lead to their global boundedness \cite{fujie,newapproach}, if $\chi<\chi_0(n)$, where $\chi_0(2)>1.015$ (\cite{newapproach}) and $\chi_0(n)=\sqrt{\f2n}$ for $n\ge 3$ \cite{fujie,masaakitomomi}. If $n\ge 3$ and $\chi>\f{2n}{n-2}$, a corresponding parabolic--elliptic analogue is known to admit solutions exploding in finite time \cite{nagaisenba}. 
Global existence of solutions to the parabolic--parabolic system in parts of the remaining range for $\chi$ was proven in cases where one component diffuses fast if compared to the other (\cite{fujiesenba1,fujiesenba2}) or for certain weaker concepts of solutions, at least excluding Dirac-type singularities (\cite{winkler_singular,stinnerwinkler,lanwin}). Also the coupling to a fluid has been investigated in \cite{BLM2}, yielding global classical solutions whenever $χ<\sqrt{\f2n}$. 
For a multitude of results concerning \eqref{intro:sys-classical} and some of its variants, we refer to the surveys \cite{BBTW,horstmann,hillen-painter}.

That both systems \eqref{intro:sys-prod} and \eqref{intro:sys-classical} do not involve the particular difficulty of combining consumption with singular sensitivity may be the main reason why the knowledge concerning existence of solutions to \eqref{intro:sys1} (for the moment we remain with the case of $f\equiv0$) is much sparser: 

It has long been known that travelling wave solutions (see also \cite{TWS-survey}) exist in a one-dimensional setting; this observation goes back to the work \cite{kellersegel_trav} by Keller and Segel. But only recently, general existence results were obtained: In \cite{wangxiangyu_asymptotic}, it was proven that (for $\Om\in\set{ℝ^2,ℝ^3}$) solutions exist globally whenever a strong smallness condition (involving up to second derivatives) is imposed on the initial data; 
without this condition (and for bounded domains $\Om\subset ℝ^2$), in 
\cite{win_ct_sing_abs}
the existence of generalized solutions was proven 
where these solutions are classical after some waiting time if the initial bacterial mass $\io u_0$ is small \cite{win_ct_sing_abs_eventual}. These results can be carried over also to the situation where \eqref{intro:sys1} is coupled to a Stokes fluid  (\cite{YilongWang2016}, \cite{black}). In higher dimensional situations, however, even existence results (without small-data conditions) seem to be elusive. 
In \cite{win_ct_sing_abs_renormalized}, renormalized solutions could be proven to exist in a radially symmetric setting.  

On the other hand, certain modifications of the system, like introducing stronger, porous-medium type diffusion for the first component (\cite{locallybounded}), or replacing $u$ in the chemotaxis term by functions similar to $u^{α}$ with smaller exponents $α$ (\cite{dongmei}), can enforce global existence of classical solutions. 

In models without singular sensitivity (that is, for example, \eqref{intro:sys-classical} instead of \eqref{intro:sys-prod})
, another such mechanism whose presence is known to help avoid blow-up of solutions, in some cases at least, is given by logistic source terms $f(u)=κu-μu^2$. The biological reasoning behind addition of these terms is a desire to capture effects of population growth, including death effects due to overpopulation. Often, the growth parameter $κ$ plays no important role in the analysis and could be set to $0$ or sometimes even negative (in modelling starving populations); the parameter $μ$, on the other hand, can be decisive in the proofs. Of particular interest are very small, but nevertheless positive, values of $μ$, because population growth occurs on a larger timescale than diffusive or chemotactical motion.  

While logistic sources by no means make the dynamical properties of system \eqref{intro:sys-classical} uninteresting (cf. the numerical experiments of \cite{painterhillen-chaos}, findings on exponential attractors in \cite{otym} or the results on transient growth phenomena in \cite{win_exceed_ell,lan_exceed,win_exceed_par}), they work against the aggregative tendencies of the cross-diffusive term: In two-dimensional domains, system \eqref{intro:sys-classical} with $f(u)=κu-μu^2$ admits global solutions \cite{osaki-yagi} for arbitrary positive $μ$; in higher dimensions, $μ$ must be sufficiently large for the global existence proofs to be applicable (\cite{mygross}), but at least global weak solutions are known to exist for any $μ>0$ (\cite{eventualsmoothness}). An explicit largeness condition on $μ$ for global existence in the parabolic--elliptic counterpart of \eqref{intro:sys-classical} is $μ>\f{n-2}n$ (\cite{tellowin}), which we mention for comparison with 
condition \eqref{conditiononmuchi} of Theorem \ref{thm:main:ge}. Logistic sources 
play a similar role in attraction-repulsion chemotaxis systems \cite{attraction-repulsion} or chemotaxis systems with fractional diffusion \cite{jan-rafael}, and have also been studied with smaller exponents \cite{nakaguchi_osaki}, \cite{giuseppe}, or nonconstant parameters \cite{tahir}; for results on convergence rates see \cite{he_zheng_convergence}.  

Also in consumption models (like \eqref{intro:sys1} without singular sensitivity) that have been studied in the context of chemotaxis-fluid models (see e.g. \cite{lorz}, \cite{win_ctns}, \cite[Sec. 4.1.1]{BBTW}), but also without fluid (e.g. \cite{Tao_consumption}), presence of $f(u)=κu-μu^2$ can help in the derivation of suitable a priori estimates and finally ensure global existence of classical solutions or of weak solutions eventually regularizing (\cite{joh_fluid,yulanjoh}). 

It therefore stands to reason that also in systems with singular sensitivity, like \eqref{intro:sys1} or \eqref{intro:sys-prod}, logistic source terms can help ensuring global existence. Indeed, this can be the case, as shown for two-dimensional domains in \cite{blowupprev} and \cite{aotym,Zhao2016} for the parabolic--elliptic analogue of \eqref{intro:sys-prod} and \eqref{intro:sys-prod} itself, respectively. The boundedness results in \cite{blowupprev} and \cite{Zhao2016} feature a \textit{largeness} assumption on $κ$, indicating that somehow a large bacterial mass makes the signal concentration avoid the singularity in the sensitivity (cf. also \cite[Lemma 2.2]{fujie}). As noted above, the most striking difference between \eqref{intro:sys-prod} and \eqref{intro:sys1} is that just this mechanism is absent in \eqref{intro:sys1}. In \cite{BLM2}, a similar lack of a positive global lower bound for the second component (there caused by transport terms arising from a coupling of \eqref{intro:sys-prod} to a (
Navier--)
Stokes fluid) seemed to make it impossible to find a boundedness result. 

But even if not boundedness, can we at least guarantee global existence? Can we, in doing so, possibly even surpass the restriction on the dimension used in both the logistic results \cite{blowupprev,Zhao2016} concerning \eqref{intro:sys-prod} and in \cite{win_ct_sing_abs} dealing with a source term free variant of \eqref{intro:sys1}?  

We will attempt to pursue these questions, and, more precisely, treat the system 
\begin{align}\label{sys}\begin{array}{rll}
 u_t&=Δ u -\chi\nabla\cdot\left(\frac{u}{v}\nabla v\right)+f(u),\\
 v_t&=Δ v -uv,\\
 \partial_{\nu}u&=\partial_{\nu}v=0,\\
 u(\cdot,0)&=u_0, \quad v(\cdot,0)=v_0,
 \end{array}
\end{align}
posed for positive time and in a bounded spatial domain $\Om\subset ℝ^n$, $n\in ℕ$, with smooth boundary for initial data 
\begin{equation}\label{cond:init}
 u_0 \in C^0(\Ombar), \qquad v_0\in 
 W^{1,∞}(\Om),\;\;\; u_0\ge 0,\; v_0>0 \text{ in } \Ombar. 
\end{equation}
We will assume that there are $κ\ge 0$, $μ>0$ such that 
\begin{equation}\label{cond:f}
 f\in C^1(ℝ), \quad f(0)=0,\quad f(u)\le κu-μu^{2} \text{ for } u\ge 0. 
\end{equation}
\begin{bemerkung}\label{rem:f}
If $f\in C^1(ℝ)$, $f(0)=0$ and $\limsup_{u\to\infty} u^{-2}f(u) \le -μ<0$, then there is $u_*>0$ such that $f(u)\le -μu^2$ for $u>u_*$ and with some $κ\geq \sup_{u\in (0,u_*]} \f{f(u)+μu^{2}}u$ (which exists due to $f(0)=0$ and finiteness of $f'(0)$), condition \eqref{cond:f} is certainly satisfied. 
\end{bemerkung} 

We will, moreover, require that $χ\in\kl{0,\sqrt{\f2n}}$, similar to the above-mentioned results on \eqref{intro:sys-prod}. 

Under these assumptions, we will prove global existence of solutions for sufficiently large values of $μ$, and in particular, notably, for any positive $μ$, if $n=2$: 

\begin{satz}\label{thm:main:ge}
Let $n\in ℕ$, $n\ge2$, and let $\Om\subset ℝ^n$ be a bounded domain with smooth boundary. Assume that 
\begin{equation}\label{conditiononmuchi}
 0<χ<\sqrt{\f 2n}, \qquad μ>\f{n-2}n, 
\end{equation}
$κ\ge 0$, and that $f$ satisfies \eqref{cond:f}. Then for any initial data $(u_0,v_0)$ as in \eqref{cond:init} there is a global classical solution $(u,v)$ to \eqref{sys}. 
\end{satz}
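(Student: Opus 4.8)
The strategy is to produce a bound for $\io u^{p}$ that is uniform in $t<\Tmax$, for some sufficiently large $p$, and then to invoke the extensibility criterion.

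\textbf{Step 1 (local theory, reductions, cheap estimates).} A by now routine fixed-point argument yields a unique maximal classical solution $(u,v)$ on $\Om\times[0,\Tmax)$ with $u\ge0$, $v>0$, together with the dichotomy that $\Tmax<\infty$ entails $\limsup_{t\nearrow\Tmax}\|u(\cdot,t)\|_{L^\infty(\Om)}=\infty$. Here it suffices to monitor $u$: since the second equation is \emph{linear} in $v$, the comparison principle gives $0<v\le\|v_0\|_{L^\infty(\Om)}$ and $v(\cdot,t)\ge(\min_{\Ombar}v_0)\,\euler^{-\int_0^t\|u(\cdot,s)\|_{L^\infty(\Om)}\diff s}$, so a bound on $\|u\|_{L^\infty}$ on $[0,\Tmax)$ together with $\Tmax<\infty$ forces a positive lower bound on $v$, hence control of $\nabla v/v$ out of the $v$-equation (using $v_0\in W^{1,\infty}(\Om)$ from \eqref{cond:init}), hence — via parabolic regularity in the first equation — control of all higher norms up to $\Tmax$. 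It is convenient to pass to $w:=-\ln v$, which is bounded from below, satisfies $w_t=\Delta w-|\nabla w|^2+u$ with $\partial_\nu w=0$, and turns the chemotactic term into $\chi\nabla\cdot(u\nabla w)$. Finally, integrating the first equation and using \eqref{cond:f} gives $\dt\io u\le\kappa\io u-\f{\mu}{|\Om|}(\io u)^2$, whence $\io u(\cdot,t)\le C$ for all $t<\Tmax$ and, after a time integration, $\int_t^{t+1}\io u^2\le C$ uniformly in $t$; together with $0<v\le\|v_0\|_{L^\infty(\Om)}$ these are the only estimates that come for free.

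\textbf{Step 2 (the main estimate).} For a suitable exponent $p$ and a companion exponent $r\ge0$ I would study a coupled functional
\[
 y(t)=\io u^{p}\,v^{-r}\;+\;\lambda\io v^{-s}\qquad\bigl(\text{alternatively, of }\io u^{p}+\lambda\io\f{|\nabla v|^2}{v^2}\text{-type}\bigr),
\]
obtained by testing the first equation against $u^{p-1}v^{-r}$ and adding an estimate extracted from the (linear) second equation. After integration by parts the chemotaxis produces a \emph{singular} cross term, of the type $\io u^{p-1}v^{-r-1}\nabla u\cdot\nabla v$, respectively $\io u^{p}\f{|\nabla v|^2}{v^2}$; the point is to absorb it into the diffusion-induced dissipation $\io u^{p-2}v^{-r}|\nabla u|^2$ together with the dissipation furnished by the $v$-estimate. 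The constants in the relevant quadratic-form / Young's- and Gagliardo--Nirenberg-inequality manipulations close precisely when $\chi<\sqrt{\f2n}$. The superlinear-in-$u$ remainders that survive — in particular a multiple of $\io u^{p+1}v^{-r}$, or, after Gagliardo--Nirenberg interpolation against the gradient term, a power $(\io u^{p})^{\theta}$ with $\theta>1$ — are then dominated by the logistic dissipation $-\mu p\io u^{p+1}v^{-r}$, which is feasible exactly when $\mu>\f{n-2}n$ (the interpolation exponent that must stay below $1$ is governed by this number, cf. \eqref{conditiononmuchi}). One thereby arrives at a differential inequality of the form $y'+ay\le b$ (modulo a term controlled by $\int_t^{t+1}\io u^{2}\le C$), so that $\sup_{t<\Tmax}y(t)<\infty$; in particular $\sup_{t<\Tmax}\io u^{p}<\infty$ for the chosen $p$, which is arranged to be as large as needed (at least $p>\f n2$, if necessary after a short additional iteration).

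\textbf{Step 3 (bootstrap and conclusion).} Suppose $\Tmax<\infty$. From $u\in L^\infty((0,\Tmax);L^{p}(\Om))$ with $p>\f n2$, the variation-of-constants formula $v(\cdot,t)=\euler^{(t-t_0)\Delta}v(\cdot,t_0)-\int_{t_0}^{t}\euler^{(t-s)\Delta}(uv)(\cdot,s)\diff s$ and $L^{p}$--$L^{q}$ estimates for the Neumann heat semigroup bound $\nabla v$, hence $\nabla v/v$, in suitable $L^{q}(\Om)$-spaces on $(0,\Tmax)$, while comparing $\ln v$ with the solution of $\psi_t=\Delta\psi-u$ shows $v\ge c(\Tmax)>0$ there. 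Consequently the first equation is a uniformly parabolic drift--diffusion equation with drift in $L^\infty((0,\Tmax);L^{q}(\Om))$ for some $q>n$ and with zero-order term satisfying \eqref{cond:f}; a standard Moser-type iteration then yields $\sup_{t<\Tmax}\|u(\cdot,t)\|_{L^\infty(\Om)}<\infty$, contradicting the extensibility criterion. Hence $\Tmax=\infty$, and parabolic regularity promotes $(u,v)$ to a global classical solution of \eqref{sys}.

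\textbf{Expected main obstacle.} Step 2 is the heart of the matter. In the signal-\emph{production} system \eqref{intro:sys-prod} the term $+u$ in the second equation supplies, upon differentiating $\io v^{-r}$, a favourable negative contribution $-r\io u\,v^{-r-1}$ that keeps $v$ away from its singularity; here $v_t=\Delta v-uv$ offers nothing of the kind, and $v$ has no a priori positive lower bound whatsoever. The whole estimate must therefore be engineered so that the singular chemotactic contribution is cancelled — up to terms that the quadratic logistic dissipation can swallow — by dissipation already present in the system, and verifying that this cancellation is quantitatively possible is exactly what pins down the restrictions $\chi<\sqrt{\f2n}$ and $\mu>\f{n-2}n$.
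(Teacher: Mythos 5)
Your plan follows essentially the same route as the paper's proof: local existence with the extensibility criterion, basic $L^1$ and $L^\infty$ estimates, the functional $\io u^p v^{-r}$ as the core a priori estimate, and a bootstrap from $L^p$ with $p>\f n2$ to $L^\infty$ of $u$ (the paper iterates Neumann heat-semigroup estimates where you propose Moser iteration, a cosmetic difference since both need the positive lower bound on $v$ on $(0,T)$ that you correctly derive from the $L^p$ bound). The details of your Step 2 are, however, cleaner in the paper than you anticipate. The auxiliary terms $\lambda\io v^{-s}$ or $\lambda\io|\nabla v|^2/v^2$ are unnecessary, and no Gagliardo--Nirenberg interpolation is involved: differentiating $\io u^p v^{-r}$ alone produces, besides the gradient terms, the combination $(r-\mu p)\io u^{p+1}v^{-r}$, where the $+r$ part comes from the $v$-equation and the $-\mu p$ part from the logistic damping, and these two powers of $u$ match \emph{exactly} because the degradation in \eqref{cond:f} is quadratic (this is why $\alpha=2$ is the critical exponent, as the paper points out after Lemma~\ref{upvr}). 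Choosing $r\in\kl{r_-,\min\{r_+,\mu p\}}$ with $r_\pm=\f{p-1}2\kl{1\pm\sqrt{1-p\chi^2}}$, where $\kl{\f n2,\f1{\chi^2}}$ is nonempty iff $\chi<\sqrt{\f2n}$ and $r_-<\mu p$ is achievable for some $p>\f n2$ iff $\mu>\f{n-2}{2n}$, one lands on $\dt\io u^p v^{-r}\le p\kappa\io u^p v^{-r}$. Note that this is only a Gr\"onwall bound giving time-local control, not the form $y'+ay\le b$ you envision; the paper remarks explicitly that $\io u^p v^{-r}$ can tend to infinity as $t\to\infty$ (take $u_0\equiv\kappa/\mu$, $v_0\equiv1$). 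Fortunately a bound on $[0,T]$ for finite $T$ is all that is needed to contradict the blow-up criterion, so this does not derail your argument.
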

\begin{bemerkung}
 In fact, \eqref{sys} with $f(u)=ru-μu^{α}$ has very recently been treated by Zhao and Zheng in \cite{veryrecent}, where they proved global existence if $α>1+\f{n}2$. Hence, in the $2$-dimensional setting, Theorem \ref{thm:main:ge}, which covers $α=2=1+\f n2$, can be seen as a natural extension of their theorem to the boundary case not encompassed in \cite{veryrecent}; in higher dimensions, the condition on $α$ in Theorem \ref{thm:main:ge} is much weaker than in \cite{veryrecent}. On the other hand, an additional restriction on $χ$ is needed. These differences in the results stem from the fact that the proofs are based on different main ideas (for further comments, see below).
\end{bemerkung}

Our second main result is concerned with the question of boundedness, which we can assure if $\Om$ is a one-dimensional domain: 

\begin{satz}\label{thm:main:bd1d}
 Let $\Om\subset ℝ$ be an open, bounded interval. Let $κ\ge 0$, $μ>0$ and $χ>0$. Assume that $(u_0,v_0)$ satisfies \eqref{cond:init} and $f$ fulfils \eqref{cond:f}. Then \eqref{sys} has a global solution which is, furthermore, bounded.
\end{satz}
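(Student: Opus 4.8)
The plan is to deduce both statements from \emph{time-uniform} a priori bounds, the engine being a viscous Burgers-type equation satisfied by $\partial_x\ln v$; global existence then follows because the same estimates, localised to $[0,T]$, preclude blow-up, and boundedness because they are uniform in $t$. I would start with the standard preliminaries: local existence of a classical solution on a maximal interval $[0,\Tmax)$ with $v>0$, together with the extensibility criterion that $\Tmax<\infty$ would force $\|u(t)\|_{L^\infty(\Om)}+\|v(t)\|_{W^{1,\infty}(\Om)}+\|1/v(t)\|_{L^\infty(\Om)}\to\infty$ as $t\uparrow\Tmax$; the mass bound $\io u(t)\le M_0:=\max\{\io u_0,\ \kappa|\Om|/\mu\}$ from integrating the first equation and using $f(u)\le\kappa u-\mu u^{2}$ together with $\io u^2\ge|\Om|^{-1}(\io u)^2$; the pointwise upper bound $v\le\|v_0\|_{L^\infty(\Om)}$ by comparison in the second equation; and, on each finite $[0,T]$, the lower bound $v(\cdot,t)\ge(\min_{\Ombar}v_0)\exp\bigl(-\int_0^t\|u(s)\|_{L^\infty(\Om)}\diff s\bigr)>0$, which --- once $u$ has been shown to be bounded --- rules out the last scenario in the extensibility criterion.

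The key step is the following. Integrating the mass identity over $(t,t+1)$ gives the \emph{unconditional} spacetime bound $\int_t^{t+1}\io u^2\le A:=(1+\kappa)M_0/\mu$, uniformly in $t$. Writing $p:=\partial_x\ln v=v_x/v$, a short computation converts the second equation into $p_t=p_{xx}+2pp_x-u_x$ with homogeneous Dirichlet data $p|_{\partial\Om}=0$ (since $v_x=0$ on $\partial\Om$). Testing with $p$, the Burgers nonlinearity drops out ($\io p^2p_x=0$) and an integration by parts leaves $\dt\io p^2+\io p_x^2\le\io u^2$; combining this with the Poincaré inequality $\io p^2\le C_P\io p_x^2$ and summing over unit time intervals yields a time-uniform bound $\io p^2(t)\le K$, and hence also $\int_t^{t+1}\io p_x^2\le K+A$ uniformly.

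Next I would test the first equation with $u$: the chemotactic term is $-\chi\io u(up)_x=\chi\io uu_xp\le\frac12\io u_x^2+\frac{\chi^2}{2}\io u^2p^2$, and I would estimate $\io u^2p^2\le\|p\|_{L^\infty(\Om)}^2\io u^2\le(\io p_x^2+C_1\io p^2)\io u^2\le(\io p_x^2+C_1K)\io u^2$ via the one-dimensional inequality $\|p\|_{L^\infty(\Om)}^2\le\io p_x^2+C_1\io p^2$. After absorbing $\frac12\io u_x^2$ and using the logistic dissipation in the form $\mu\io u^3\ge\mu|\Om|^{-1/2}(\io u^2)^{3/2}$, the quantity $y(t):=\io u^2$ satisfies $y'\le b(t)y-cy^{3/2}$ with $c:=2\mu|\Om|^{-1/2}>0$ and $b(t):=\chi^2\io p_x^2+\chi^2C_1K+2\kappa$, where $\int_t^{t+1}b\le\bar b$ uniformly. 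An elementary lemma --- substitute $w:=y^{-1/2}$, which obeys $w'+\frac12 bw\ge\frac c2$, whence $w(t)\ge\frac c2 e^{-\bar b/2}$ for $t\ge1$ --- then gives a time-uniform bound on $\|u(t)\|_{L^2(\Om)}$. Repeating with test function $u^{q-1}$ --- the pointwise splitting $\chi(q-1)u^{q-1}u_xp\le\frac{q-1}{2}u^{q-2}u_x^2+\frac{\chi^2(q-1)}{2}u^qp^2$ works for \emph{every} $\chi>0$, and the logistic term provides the super-linear sink $\io u^{q+1}\ge|\Om|^{-1/q}(\io u^q)^{1+1/q}$ --- yields, by the same lemma, time-uniform bounds on $\|u(t)\|_{L^q(\Om)}$ for all finite $q$; a Moser-type iteration (or, alternatively, $L^q$-testing with $q$ large together with $W^{1,2}(\Om)\hookrightarrow L^\infty(\Om)$) upgrades these to a time-uniform bound on $\|u(t)\|_{L^\infty(\Om)}$. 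With $v\le\|v_0\|_{L^\infty(\Om)}$ this establishes boundedness, and, carried out on $[0,\Tmax)$, it excludes blow-up and thereby proves global existence.

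The crux --- and the main obstacle --- is precisely the $L^2$- (and $L^q$-) estimate for $u$. Because the second equation genuinely pushes $v$ towards $0$, there is \emph{no} time-uniform lower bound for $v$, hence no direct handle on $1/v$ or on $\|p\|_{L^\infty(\Om)}$; all control must be channelled through the Burgers equation for $p$ and the bound $\io p^2\le K$, and the chemotactic term, being quadratic in $u$, can be dominated only because the logistic source supplies a \emph{strictly} super-quadratic sink $-\mu\io u^3$ --- this is the single place where $\mu>0$ is needed, and it is exactly the ingredient absent in the source-free system of \cite{win_ct_sing_abs}. Two further points demand care: the ODE comparison lemma ``$y'\le b(t)y-cy^{1+\sigma}$, $\sigma>0$, with $\int_t^{t+1}b$ bounded, implies $\sup_t y<\infty$'', which underlies the whole bootstrap, and the observation that only the \emph{upper} bound $f(u)\le\kappa u-\mu u^{2}$ is ever invoked --- so a possibly very negative $f$ is harmless, since one is bounding the nonnegative quantity $u$ from above and a strong sink only helps.
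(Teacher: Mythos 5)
Your argument agrees with the paper's up to and including the time-uniform $L^2$ bound on $u$: both pass to the (de-singularized) auxiliary variable — you write $p:=v_x/v$, the paper uses $w:=-\log(v/\|v_0\|_{L^\infty(\Om)})$ with $p=-w_x$, which is the same up to sign — exploit $\io p^2p_x=0$, combine the Dirichlet--Poincar\'e inequality for $p$ with the spatio-temporal estimate $\int_t^{t+1}\io u^2\le A$ to get $\sup_t\io p^2$ and $\sup_t\int_t^{t+1}\io p_x^2$, and then test the first equation with $u$. (The paper absorbs $\chi^2\io u^2w_x^2\le\tfrac{\mu}{2}\io u^3+c\io w_x^6$ and uses a \emph{linear} ODI together with $\int_t^{t+1}\io w_x^6$; you instead put $\|p\|_{L^\infty}^2$ into the time-dependent coefficient and invoke a nonlinear ODI lemma $y'\le b(t)y-cy^{3/2}$. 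Both routes are fine.)

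The gap is in the upgrade from $L^2$ to $L^\infty$. Your nonlinear ODI lemma at level $q$ yields, with $\sigma=1/q$, $c_q=q\mu|\Om|^{-1/q}$ and $\bar b_q\sim q^2\sup_t\int_t^{t+1}\|p\|_{L^\infty}^2$, the bound $\sup_t\io u^q\le(\sigma c_q)^{-q}e^{\bar b_q}=\mu^{-q}|\Om|\,e^{\bar b_q}$, i.e.\ $\sup_t\|u\|_{L^q}\lesssim e^{Cq}$; these constants diverge as $q\to\infty$, so $L^q$-for-all-finite-$q$ does not give $L^\infty$ by passage to the limit. The alternative you mention, ``$L^q$-testing together with $W^{1,2}(\Om)\hookrightarrow L^\infty(\Om)$'', would need a \emph{pointwise-in-time} bound on $\io u_x^2$ (or on $\io|(u^{q/2})_x|^2$), which you do not have — only $\int_t^{t+1}\io u_x^2$ is controlled. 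And a genuine Moser--Alikakos iteration is a different argument from iterating your ODI lemma: it must \emph{retain} the gradient term, produce a recursion $M_k\le(Cq_k^{a})^{1/q_k}M_{k-1}$ with at most polynomial $C$, and — in the present problem — this naturally requires a higher integrability estimate on $p$ (or a careful treatment of the factor $\|p\|_{L^\infty}^2$, which is only spatio-temporally $L^1$) that you have not supplied. The paper closes exactly this hole differently: it derives a further, time-uniform bound $\sup_t\io w_x^4\le C$ by testing the equation for $w_x$, and then uses Neumann heat-semigroup estimates (with $\|u\|_{L^2}$ and $\|w_x\|_{L^4}$ as inputs) to obtain $\sup_t\|u\|_{L^\infty(\Om)}<\infty$ directly. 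You should either adopt a step of this kind (establishing $\sup_t\|p\|_{L^4}$ and then using semigroup or a properly executed Moser argument), or work out the iteration explicitly; as written, the $L^\infty$ bound — the whole point of the theorem, since global existence in $n=1$ is already known — is not actually proved.
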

Global existence of solutions in the one-dimensional case may not be very surprising, since it has been proven even for $f\equiv 0$ in \cite{dongmei} and the system with logistic source in \cite{veryrecent}. However, neither of these results is concerned with their boundedness (as one can see from, e.g., \cite[(4.3)]{veryrecent}).

After introducing a local existence result (Theorem \ref{satz:lok}) in Section \ref{kap:lokal}, whose proof we will detail in Appendix \ref{sec:locex-proof}, and ensuring some simple properties of the solution, like nonnegativity of both components, mass conservation for $u$, the spatio-temporal estimate resulting from presence of the logistic source, and boundedness of $v$ in Section \ref{kap:eigenschaften}, in Section \ref{sec:vpos}, we work with a transformed, non-singular system arising from the substitution $w:=-\log \f{v}{\Norm{\Lom\infty}{v_0}}$ in order to derive a positive lower estimate for $v$. For the global existence proof we rely on an iterative procedure (in the proof of Lemma \ref{lm:bd:u:nhalbe}), whose steps are based on Lemma \ref{lm:gradv} and thereby on semigroup estimates for the Neumann heat semigroup, and whose starting point is an estimate for $\Norm{\Lom p}{u(\cdot,t)}$ for some $p>\f n2$. We obtain the latter in Lemma \ref{lm:dasistthm1} from the fact that for suitably chosen $p>0$ 
and 
$r>0$ 
the functional 
\[
 \io u^pv^{-r} 
\]
 satisfies a differential inequality of the form $\f{d}{dt} \io u^pv^{-r}\le C \io u^pv^{-r}$ (proof of Lemma \ref{upvr}), as long as $μ$ and $χ$ fulfil the conditions of Theorem \ref{thm:main:ge}. Use of this functional also marks one of the main differences to the approach in \cite{veryrecent}, where the estimates essentially originate from the absorptive term $-\int_0^T\io u^{α}$, which arises from the source function $f(u)=ru-μu^{α}$ used there with larger exponents $α$.
In Section \ref{kap:1d}, we will restrict ourselves to the one-dimensional setting and, again combining the transformed system with semigroup estimates, prove Theorem \ref{thm:main:bd1d}. 
In Appendix \ref{sec:comparison}, finally, we state and prove a comparison theorem, a useful and, in fact, often-used tool, of which we, nevertheless, did not find a version entirely suitable for application to \eqref{sys} in the literature.

\section{Local existence of a classical solution}\label{kap:lokal}

Before we can investigate global existence or qualitative properties of classical solutions, we have to consider the local-in-time existence and uniqueness of solutions to \eqref{sys}.

Local existence of classical solutions often is obtained from the following useful lemma taken from the survey \cite{BBTW}: 
\begin{lemma}\label{satz:survey}
Let $n\ge 1$ and $\Om\subset ℝ^n$ be a bounded domain with smooth boundary, and let $q>n$. For some $\omega\in(0,1)$ let $S\in C_{\text{loc}}^{1+\omega}(\overline{\Omega}\times[0,\infty)\times\R^2)$, $h\in C^{1-}(\overline{\Omega}\times[0,\infty)\times\R^2)$ and $g\in C_{\text{loc}}^{1-}(\overline{\Omega}\times[0,\infty)\times\R^2)$ with $h(x,t,0,v)\geq 0$ for all $(x,t,v)\in \overline{\Omega}\times[0,\infty)^2$ and $g(x,t,u,0)\geq 0$ for all $(x,t,u)\in \overline{\Omega}\times[0,\infty)^2$.\\
  Then for all non-negative $u_0\in C^0(\Ombar)$ and $v_0\in W^{1,q}(\Om)$ there exist $\Tmax\in(0,\infty]$ and a uniquely determined pair of non-negative functions 
 \begin{align*}
 u&\in C^0(\overline{\Omega}\times[0,\Tmax))\cap C^{2,1}(\overline{\Omega}\times(0,\Tmax))\quad\text{and }\\
 v&\in C^0(\overline{\Omega}\times[0,\Tmax))\cap C^{2,1}(\overline{\Omega}\times(0,\Tmax))\cap L_{\text{loc}}^{\infty}([0,\Tmax);W^{1,q}(\Omega)),
 \end{align*}
 such that for $(u,v)$ in $\Omega\times(0,\Tmax)$ we have:
 \begin{align*}
  \begin{array}{lr}
   u_t=Δ u -\nabla\cdot\left(uS(x,t,u,v)\nabla v\right)+h(x,t,u,v),&x\in\Omega, ~t>0,\\
   v_t=Δ v-v+g(x,t,u,v),&x\in\Omega, ~t>0,\\
   \frac{\partial u}{\partial \nu}=\frac{\partial v}{\partial \nu}=0,&x\in\partial\Omega, ~t>0,\\
   u(x,0)=u_0(x), v(x,0)=v_0(x),&x\in\Omega,
  \end{array}
 \end{align*}
 and $\Tmax=\infty$ or $\norm{\infty}{u(\cdot,t)}+\wnorm{v(\cdot,t)}\rightarrow \infty$ for $t\nearrow \Tmax$.
\end{lemma}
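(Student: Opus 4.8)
The plan is to obtain the solution as a fixed point of a Duhamel-type map and then bootstrap its regularity. Fix $T\in(0,1)$ to be chosen small and, for $R>0$, consider the closed set
\[ \mathcal{X}_{T,R}:=\Bigl\{(u,v):\ u\in C^0(\Ombar\times[0,T]),\ v\in C^0([0,T];W^{1,q}(\Om)),\ \|u\|_{C^0}\le R,\ \sup_{t\in[0,T]}\wnorm{v(\cdot,t)}\le R\Bigr\}, \]
and on it define $\Phi(u,v):=(\widetilde u,\widetilde v)$ via the variation-of-constants formulas
\[ \widetilde u(\cdot,t)=\euler^{t\Delta}u_0-\int_0^t\euler^{(t-s)\Delta}\nabla\cdot\bigl(u(\cdot,s)\,S(\cdot,s,u,v)\,\nabla v(\cdot,s)\bigr)\diff s+\int_0^t\euler^{(t-s)\Delta}h(\cdot,s,u,v)\diff s \]
and $\widetilde v(\cdot,t)=\euler^{t(\Delta-1)}v_0+\int_0^t\euler^{(t-s)(\Delta-1)}g(\cdot,s,u,v)\diff s$, with $(\euler^{t\Delta})_{t\ge0}$ the Neumann heat semigroup on $\Om$; a fixed point of $\Phi$ is exactly a mild solution of the system.

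To produce such a fixed point I would apply Banach's fixed point theorem, using the standard $L^p$--$L^q$ smoothing estimates for the Neumann heat semigroup: for $1\le r\le p\le\infty$ there are $C,\lambda>0$ with $\norm{p}{\euler^{t\Delta}z}\le Ct^{-\frac n2(\frac1r-\frac1p)}\norm{r}{z}$, $\norm{p}{\euler^{t\Delta}\nabla\cdot z}\le C\bigl(1+t^{-\frac12-\frac n2(\frac1r-\frac1p)}\bigr)\euler^{-\lambda t}\norm{r}{z}$ for vector fields $z$, and $\norm{p}{\nabla\euler^{t\Delta}z}\le C\bigl(1+t^{-\frac12-\frac n2(\frac1r-\frac1p)}\bigr)\euler^{-\lambda t}\norm{r}{z}$. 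Since $q>n$, the choices $r=p=q$ (and $r=q$, $p=\infty$) produce only temporal singularities of order strictly below $1$ near $s=t$, hence integrable; combined with the embedding $W^{1,q}(\Om)\hookrightarrow C^0(\Ombar)$, the boundedness of $S,h,g$ on bounded sets, and the local Lipschitz continuity of $h,g$ (membership in $C^{1-}$) and of $S$ (membership in $C^{1+\omega}$), this shows that for $R$ large enough (depending on $\norm\infty{u_0}$ and $\wnorm{v_0}$) and $T$ small enough (depending on $R$), $\Phi$ maps $\mathcal{X}_{T,R}$ into itself and is a contraction there. This yields a unique mild solution $(u,v)$ on $[0,T]$.

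Next I would upgrade this mild solution to a classical one. From $u\in C^0(\Ombar\times[0,T])$ and $\nabla v\in L^\infty((0,T);\Lom q)$, parabolic $L^p$ estimates for the $v$-equation give Hölder continuity of $v$, and then of $\nabla v$, on $\Ombar\times[\tau,T]$ for each $\tau\in(0,T)$; inserting this into the $u$-equation, which now has Hölder continuous coefficients and right-hand side, parabolic Schauder theory gives $u\in C^{2,1}(\Ombar\times(0,T])$, and feeding this back gives $v\in C^{2,1}(\Ombar\times(0,T])$, while continuity up to $t=0$ follows from the Duhamel representation and \eqref{cond:init}. Nonnegativity of both components comes from the sign hypotheses $h(x,t,0,v)\ge0$ and $g(x,t,u,0)\ge0$: once one component is known nonnegative, $0$ is a subsolution of the equation for the other, so a comparison argument (as in Appendix~\ref{sec:comparison}) forces nonnegativity — alternatively this can be built into the iteration scheme. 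Finally, a standard continuation argument provides a maximal existence time $\Tmax\in(0,\infty]$; if $\Tmax<\infty$ but $\norm\infty{u(\cdot,t)}+\wnorm{v(\cdot,t)}$ remained bounded as $t\nearrow\Tmax$, then the local result applied from a time close to $\Tmax$ (with an existence time depending only on such a bound) would extend the solution past $\Tmax$, a contradiction; uniqueness propagates over $[0,\Tmax)$ because the coincidence set of two solutions is both open and closed.

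I expect the fixed-point step for the $u$-component to be the main obstacle: the cross-diffusion term $\nabla\cdot(uS\nabla v)$ has to be estimated in $C^0(\Ombar)$ although $\nabla v$ is only controlled in $\Lom q$, which is exactly where the hypothesis $q>n$ is used (it makes $-\tfrac12-\tfrac n{2q}>-1$, so the relevant Duhamel integral converges), and one must carefully track the interdependence of the ball radius $R$ and the time $T$ to secure self-mapping and contraction simultaneously. The subsequent regularity bootstrap and the nonnegativity are routine once the classical parabolic theory is invoked, though the sign conditions on $h$ and $g$ must be threaded through with some care.
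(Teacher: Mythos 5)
Your proposal is correct and follows the same strategy the paper relies on: the paper's own proof simply refers to \cite[Lemma 3.1]{BBTW}, which establishes local existence of mild solutions via Banach's fixed point theorem applied to the Duhamel formulas, upgrades to classical regularity by parabolic bootstrap, and obtains the extensibility criterion from the fact that the existence time depends only on $\|u_0\|_{L^\infty(\Omega)}$ and $\|v_0\|_{W^{1,q}(\Omega)}$. You have essentially reconstructed that argument, including the key role of $q>n$ in making the temporal singularity in the Duhamel integral integrable.
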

\begin{proof}
 A detailed proof can be found in \cite[Lemma 3.1]{BBTW}. There by Banach's fixed point theorem the existence of mild solutions on an interval $[0,T)$ is shown, where $T$ depends on the initial data $u_0$ and $v_0$. Bootstrap arguments then provide the required regularity. That the solution can be extended up to some maximal $\Tmax$ with the desired property follows from the fact that $T$ depends on $\norm{\infty}{u_0}$ and $\wnorm{v_0}$ only.
\end{proof}

Attempting to bring system \eqref{sys} into the shape assumed in Lemma \ref{satz:survey} results in the choices
\begin{align}\label{susw}
 S(x,t,u,v)=\frac{\chi}{v}, &&  h(x,t,u,v)=f(u), && g(x,t,u,v)=v-uv.
%
\end{align}
Lemma \ref{satz:survey}, however, does not yield existence of solutions to \eqref{sys}, where $S$ is singular at $v=0$ and $h$ is not known to be Lipschitz continuous. With adjustments akin to those in the proof of \cite[Thm. 2.3]{newapproach} we can, nevertheless, find a way to employ Lemma \ref{satz:survey}.

\begin{satz}[Local existence and extensibility]\label{satz:lok}
 Let $n\geq 1$, $\Omega\subseteq\R^n$ a bounded, smooth domain and $q>n$. Then for all non-negative functions $u_0\in C^0(\overline{\Omega})$ and functions $v_0\in W^{1,\infty}(\Omega)$, positive throughout $\Ombar$, there are $\Tmax\in (0,\infty]$ and a unique pair of functions  $(u,v)$ with 
 \begin{align*}
 u&\in C^0(\overline{\Omega}\times[0,\Tmax))\cap C^{2,1}(\overline{\Omega}\times(0,\Tmax))\quad\text{and }\\
 v&\in C^0(\overline{\Omega}\times[0,\Tmax))\cap C^{2,1}(\overline{\Omega}\times(0,\Tmax))\cap L_{\text{loc}}^{\infty}([0,\Tmax);W^{1,q}(\Omega)),
 \end{align*}
solving \eqref{sys} in the classical sense on  $\Omega\times[0,\Tmax)$ and with 
\begin{equation}\label{extensibilitycrit}
\Tmax=\infty \text{ or } \norm{\infty}{u(\cdot,t)}+\wnorm{v(\cdot,t)}\rightarrow \infty\text{ as }t\nearrow \Tmax.
\end{equation}
\end{satz}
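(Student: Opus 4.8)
The plan is to deduce Theorem~\ref{satz:lok} from Lemma~\ref{satz:survey} by a regularization argument in the spirit of \cite[Thm.~2.3]{newapproach}: we replace the singular sensitivity $\chi/v$ and the merely $C^1$ source $f$ by globally well-behaved approximations, apply Lemma~\ref{satz:survey} to the resulting system, and then show that for short times the approximations are ``inactive'', so that the pair produced in fact solves \eqref{sys}.

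Concretely, I would first note that $m_0:=\min_{\Ombar}v_0>0$, since $v_0\in W^{1,\infty}(\Om)\hookrightarrow C^0(\Ombar)$ is positive on the compactum $\Ombar$. Fixing $\delta\in(0,m_0/2)$ I pick a nondecreasing $\rho_\delta\in C^\infty(\R)$ with $\rho_\delta(s)=s$ for $s\ge\delta$ and $\rho_\delta\ge\delta/2$ on $\R$, so that $S_\delta(x,t,u,v):=\chi/\rho_\delta(v)$ is smooth, bounded and globally Lipschitz; fixing $M>\norm{\infty}{u_0}$ I take $f_M\in C^1(\R)$ globally Lipschitz with $f_M=f$ on $[0,M]$ (hence $f_M(0)=0$), and set $h:=f_M(u)$. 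Rewriting the second equation of \eqref{sys} as $v_t=\Delta v-v+g$ with $g(x,t,u,v):=v-uv$ -- a polynomial, hence $C^{1-}_{\mathrm{loc}}$, with $g(x,t,u,0)=0\ge0$, while $h(x,t,0,v)=f_M(0)=0\ge0$ -- and recalling $v_0\in W^{1,\infty}(\Om)\subset W^{1,q}(\Om)$, Lemma~\ref{satz:survey} applies and yields $T^{(\delta)}_{\max}\in(0,\infty]$ together with a unique nonnegative pair $(u,v)$ of the regularity asserted in Theorem~\ref{satz:lok} solving the regularized system, subject to the dichotomy $T^{(\delta)}_{\max}=\infty$ or $\norm{\infty}{u(\cdot,t)}+\wnorm{v(\cdot,t)}\to\infty$.

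The second step establishes a lower bound for $v$ and removes the cut-offs. On the time interval on which $u<M$ -- nonempty since $u(\cdot,0)=u_0$ and $\norm{\infty}{u_0}<M$ -- the second equation gives $v_t-\Delta v=-uv\ge-Mv$, whence the comparison principle of Appendix~\ref{sec:comparison} yields $v(x,t)\ge m_0\euler^{-Mt}$ there. Thus on $[0,T_\delta)$, with $T_\delta>0$ chosen small enough that in addition $u<M$ holds throughout and $m_0\euler^{-MT_\delta}\ge\delta$, we have $v\ge\delta$ and $u<M$, hence $\rho_\delta(v)=v$ and $f_M(u)=f(u)$, so $(u,v)$ solves \eqref{sys} classically on $[0,T_\delta)$. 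Concatenating such local pieces, together with uniqueness in the asserted class -- which I would obtain by subtracting the equations for two solutions with common initial data, controlling the difference of the singular fluxes by means of the pointwise lower bound for $v$, and closing a Gronwall estimate on a short interval -- produces a maximal time $\Tmax$ and a unique solution of the stated regularity; classicality at each stage follows from the mild formulation by the usual parabolic/elliptic $L^p$- and Schauder bootstrap. For the extensibility criterion \eqref{extensibilitycrit} I would argue by contradiction: if $\Tmax<\infty$ but $\norm{\infty}{u(\cdot,t)}+\wnorm{v(\cdot,t)}\le C$ on $[0,\Tmax)$, then $\norm{\infty}{u}\le C$ forces, via $v_t-\Delta v\ge-Cv$ and Appendix~\ref{sec:comparison}, the uniform lower bound $v\ge m_0\euler^{-C\Tmax}>0$; picking a fixed admissible $\delta<m_0\euler^{-C\Tmax}$, the regularized problem with these now-controlled data can be solved with initial time $\Tmax-\eta$ and extends the solution beyond $\Tmax$, contradicting maximality.

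The hard part will be the circular interplay between the two limiting procedures: the cut-off level $\delta$ in the sensitivity must stay below the a priori lower bound for $v$, yet that bound degrades like $\euler^{-\norm{\infty}{u}\,t}$ and hence depends on the very solution under construction. Disentangling this -- by fixing first $M$, then $\delta$, then a sufficiently short existence interval -- and, closely related, proving \eqref{extensibilitycrit} in the stated form, i.e.\ recovering a positive lower bound for $v$ from a bound on $\norm{\infty}{u}$ alone and without any a priori hypothesis on $\inf_{\Om}v$, are the points that require genuine care; the remaining fixed-point and bootstrap steps are routine and are precisely those behind Lemma~\ref{satz:survey}.
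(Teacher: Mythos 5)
Your proposal follows essentially the same route as the paper: truncate the singular sensitivity $\chi/v$ below a small threshold, linearize/cut off $f$ above a large threshold so that $h$ becomes globally Lipschitz, recast the $v$-equation as $v_t=\Delta v-v+g$ with $g=v-uv$, invoke Lemma~\ref{satz:survey}, and then use the comparison theorem of Appendix~\ref{sec:comparison} to show $v\ge \inf v_0\cdot\euler^{-Mt}$ on the time interval where $u<M$, so that for short times the truncations are inactive and the regularized solution solves \eqref{sys}. The one place where the paper's proof is technically cleaner is exactly the ``circular interplay'' you flag: instead of fixing a single $(\delta,M)$ and concatenating pieces by hand, the paper introduces a one-parameter family of truncations indexed by $k$, with cutoff level $\eta_k:=\inf v_0\cdot\euler^{-(k+1)^2}$ linked to the $u$-bound $k$ and the time horizon $k$, and the exit times $t_k:=\min\{k,\inf\{t:\sup u_k>k\text{ or }\wnorm{v_k}>k\}\}$; Claim~1 ($v_k\ge\eta_{k-1}$ on $[0,t_k]$) then guarantees the truncations are inactive there, Claim~2 shows $(u_k,v_k)=(u_{k+1},v_{k+1})$ on the overlap by the \emph{uniqueness already built into} Lemma~\ref{satz:survey} (avoiding a separate Gronwall argument), and $\Tmax:=\lim t_k$ together with the same monotonicity yields the extensibility criterion directly rather than by contradiction. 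Your outline is correct, but working out the concatenation and the proof that the exit times are monotone is precisely where the indexed-sequence device earns its keep.
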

\begin{proof}
We postpone the somewhat technical details of the proof to Appendix \ref{sec:locex-proof}.
\end{proof}


\section{Basic properties of the solution}\label{kap:eigenschaften}
In this section we will collect some basic properties of solutions to \eqref{sys} in $\Om\times[0,\Tmax)$. We will assume that $f$ is a given function fulfilling \eqref{cond:f} for $κ$, $μ$, which we suppose to be arbitrary numbers $κ\ge 0$, $μ>0$ and that $χ>0$ is arbitrary, unless otherwise specified. We will always use $(u,v)$ to denote the unique solution to \eqref{sys} in $\Om\times[0,\Tmax)$ with $\Tmax$ as provided by Theorem \ref{satz:lok} for given, fixed initial data $(u_0,v_0)$ as in Theorem \ref{satz:lok}. The properties we collect here will be fundamental for showing globality of solutions in Chapter \ref{kap:global} under additional conditions. 

First let us note the unsurprising, but nevertheless important, fact that positive initial data ensure positivity of the solution. 
\begin{lemma}\label{geq0}
 We have $u\geq 0$ and $v>0$ on $\Omega\times [0,\Tmax)$.
\end{lemma}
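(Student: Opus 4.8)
The plan is to handle the two components separately: strict positivity of $v$ comes from a comparison argument with a spatially homogeneous subsolution, and nonnegativity of $u$ is (essentially) inherited from the construction, with a maximum-principle argument as a fallback.

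\textbf{Step 1 ($v>0$).} Fix $T\in(0,\Tmax)$. Since $u\in C^0(\Ombar\times[0,\Tmax))$, the quantity $M_T:=\sup_{\Ombar\times[0,T]}|u|$ is finite (its sign is irrelevant here), so the second equation of \eqref{sys} yields $v_t=\Delta v-uv\ge\Delta v-M_Tv$ on $\Om\times(0,T)$, together with $\partial_\nu v=0$ and $v(\cdot,0)=v_0$. The spatially constant function $\ubar{v}(t):=\big(\min_{\Ombar}v_0\big)\,\euler^{-M_Tt}$ solves $\ubar{v}_t=\Delta\ubar{v}-M_T\ubar{v}$ with $\partial_\nu\ubar{v}=0$ and $\ubar{v}(0)=\min_{\Ombar}v_0\le v_0$, so the comparison principle of Appendix \ref{sec:comparison} gives $v\ge\ubar{v}$ on $\Ombar\times[0,T]$. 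As $\min_{\Ombar}v_0>0$ by \eqref{cond:init} and $T<\Tmax$ was arbitrary, we conclude $v>0$ on $\Om\times[0,\Tmax)$ (in fact on $\Ombar\times[0,\Tmax)$). Alternatively, $v\ge0$ from the construction together with $v_0\not\equiv0$ and the strong maximum principle (including its Hopf-type extension to the Neumann boundary) gives the same conclusion, but the explicit subsolution is convenient and will reappear, refined, in Section \ref{sec:vpos}.

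\textbf{Step 2 ($u\ge0$).} Nonnegativity of $u$ (and of $v$) is already part of the local existence statement: in the proof of Theorem \ref{satz:lok} the solution is obtained as a locally uniform limit of nonnegative solutions to regularized problems of the type covered by Lemma \ref{satz:survey}, and nonnegativity is preserved under this limit. If a self-contained argument is preferred, one writes $f(u)=u\,\tilde f(u)$ with $\tilde f(s):=\int_0^1 f'(\sigma s)\,\diff\sigma$ continuous (using $f(0)=0$ and $f\in C^1$), and expands the cross-diffusion term so that the first equation of \eqref{sys} reads
\[
 u_t=\Delta u-\chi\,\frac{\nabla v}{v}\cdot\nabla u+\Big(\tilde f(u)-\chi\,\frac{\Delta v}{v}+\chi\,\frac{|\nabla v|^2}{v^2}\Big)u
\]
on $\Om\times(0,\Tmax)$. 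On any strip $\Ombar\times[\tau,T]$ with $0<\tau<T<\Tmax$ the coefficients are continuous and bounded (by $v\in C^{2,1}(\Ombar\times(0,\Tmax))$ and the positivity from Step 1), $u(\cdot,\tau)\ge0$, and $\partial_\nu u=0$, so the maximum principle yields $u\ge0$ on $\Ombar\times[\tau,T]$; letting $\tau\to0$ and using continuity of $u$ at $t=0$ with $u_0\ge0$ completes the argument.

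\textbf{Expected main difficulty.} There is nothing deep here; the only point needing a little care is that the coefficients of the linear $v$- and $u$-equations degenerate as $t\to0^+$, since a priori $v$ is only $C^{2,1}$ on the open time strip and $v_0$ merely lies in $W^{1,\infty}(\Om)$. This is why Step 2 is run on $[\tau,T]$ before passing to $\tau\to0$, and why in Step 1 one should check that the precise hypotheses of the comparison theorem in Appendix \ref{sec:comparison} are indeed met by the regularity provided by Theorem \ref{satz:lok}.
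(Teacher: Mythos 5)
Your Step 1 matches the paper's argument: Remark \ref{bem:vs} obtains $v>0$ on $\Ombar\times[0,T]$ for each finite $T<\Tmax$ by comparing $v$ against the same spatially constant subsolution $(\inf v_0)\,\euler^{-Ct}$ via Theorem \ref{vs}, and the proof of Lemma \ref{geq0} simply cites this; your use of $M_T=\sup|u|$ instead of the paper's $C=\sup u$ is immaterial. For $u\ge 0$, your primary route (Option A) is a slightly different reading: the paper applies Theorem \ref{vs} to the first equation with the subsolution $\ubar u\equiv 0$, after Remark \ref{bem:vs} has checked that the drift $b=\chi\nabla v/v$ lies in $L^\infty_{\mathrm{loc}}(\Ombar\times[0,T))$ and $C^1$ on the open strip; you instead inherit nonnegativity from the construction, since the solution of Theorem \ref{satz:lok} agrees on each $[0,t_k)$ with the nonnegative pair $(u_k,v_k)$ supplied by Lemma \ref{satz:survey}. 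Both are valid (the construction in Appendix \ref{sec:locex-proof} is a patching along an increasing sequence of times rather than a uniform limit, but the conclusion you want -- $u\ge 0$ on $[0,\Tmax)$ -- survives), and the comparison route has the advantage of being self-contained at the level of this lemma rather than leaning on the interior of the existence proof.

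Your fallback Option B, however, is circular as written: on $\Ombar\times[\tau,T]$ with $\tau>0$ you invoke $u(\cdot,\tau)\ge 0$, which is precisely part of what is to be shown, and pushing $\tau\to 0^+$ does not repair this because the coefficients $\nabla v/v$, $\Delta v/v$, $|\nabla v|^2/v^2$ appearing after you expand the cross-diffusion term are only known to be continuous on $\Ombar\times(0,T)$ and may a priori degenerate as $t\to 0^+$ (indeed $v_0$ is merely $W^{1,\infty}$). A first-crossing-time contradiction argument does not close the gap either, because the first such time could be $0$. This is exactly the difficulty that the energy-type comparison theorem of Appendix \ref{sec:comparison} is built to absorb: it asks only for $C^0$ regularity of $\bar u,\ubar u$ up to $t=0$ and $C^{2,1}$ on the open strip, and needs no pointwise bound on their gradients near $t=0$. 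So if you want a self-contained alternative to Option A, the right move is the paper's comparison with $\ubar u\equiv 0$ via Theorem \ref{vs}, not a pointwise maximum principle on a shrinking time strip.
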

\begin{proof}
 Noting that due to Remark \ref{bem:vs} the comparison theorem \ref{vs} can be applied to the equations of the system \eqref{sys}, from comparison with the subsolution $\ubar{u}\equiv 0$ we immediately obtain nonnegativity of $u$. That $v$ has to be positive in any finite time interval $[0,\hat{t}]$ has already been shown in Remark \ref{bem:vs}.
\end{proof}

System \eqref{sys} does not enjoy a mass conservation property (i.e. $\norm{1}{u}=$ const), in contrast to e.g. the system in \cite{locallybounded}. Nevertheless, boundedness of $u$ in $\Lom1$ can be shown easily: 
\begin{lemma}\label{u}
 There is some $m>0$ such that 
 \begin{equation*}
  \norm{1}{u(\cdot,t)}\leq m \quad\text{for all } t\in[0,\Tmax).
 \end{equation*}
\end{lemma}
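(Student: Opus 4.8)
The plan is to test the first equation of \eqref{sys} against the constant function $1$, i.e.\ to integrate it over $\Om$. Since $\partial_\nu u=\partial_\nu v=0$ on $\partial\Om$, both the diffusion term $\io\Delta u$ and the cross-diffusive term $-\chi\io\nabla\cdot\left(\f{u}{v}\nabla v\right)$ vanish by the divergence theorem. The regularity $u\in C^0(\Ombar\times[0,\Tmax))\cap C^{2,1}(\Ombar\times(0,\Tmax))$ from Theorem \ref{satz:lok} ensures that $y(t):=\io u(\cdot,t)$ is continuous on $[0,\Tmax)$ and continuously differentiable on $(0,\Tmax)$, with
\[
 y'(t)=\io f(u(\cdot,t)), \qquad t\in(0,\Tmax).
\]

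Next I would use nonnegativity of $u$ (Lemma \ref{geq0}) together with assumption \eqref{cond:f}, which gives $f(u)\le \kappa u-\mu u^2$ pointwise, and the Cauchy--Schwarz inequality in the form $\io u^2\ge |\Om|^{-1}\left(\io u\right)^2$, to obtain the logistic differential inequality
\[
 y'(t)\le \kappa\, y(t)-\f{\mu}{|\Om|}\,y(t)^2, \qquad t\in(0,\Tmax).
\]
A comparison argument for this scalar ODE then finishes the proof: if $\kappa=0$ the right-hand side is nonpositive, so $y$ is nonincreasing and $y(t)\le y(0)=\norm{1}{u_0}$; if $\kappa>0$, then whenever $y(t)>\kappa|\Om|/\mu$ the right-hand side is negative, so $y$ can never exceed $\max\{\norm{1}{u_0},\,\kappa|\Om|/\mu\}$. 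In either case the claim holds with $m:=\max\{\norm{1}{u_0},\,\tfrac{\kappa|\Om|}{\mu}\}$.

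This statement is essentially routine and I do not expect a genuine obstacle; the only points deserving a little care are the passage from the differential inequality on $(0,\Tmax)$ to a bound valid also up to $t=0$ (handled by integrating from $\eps$ to $t$ and letting $\eps\to0$, using continuity of $y$ at $0$), making the ODE comparison rigorous (either by an elementary argument as above or by appealing to the comparison theorem of Appendix \ref{sec:comparison}), and the harmless case distinction on the sign of $\kappa$.
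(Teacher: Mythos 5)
Your proof is correct and follows essentially the same route as the paper: integrate the first equation over $\Om$, use the Neumann conditions and \eqref{cond:f} to get $y'\le\kappa y-\mu\io u^2$, apply Hölder/Cauchy--Schwarz to close the inequality in $y$, and finish by ODE comparison. The only difference is that you spell out the comparison argument and the resulting constant $m$ explicitly, which the paper leaves implicit.
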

\begin{proof}
 Due to the homogeneous Neumann boundary conditions and \eqref{cond:f}, the time derivative of $\io u$ satisfies 
 \begin{align*}
  \dt \int_{\Omega} u 
		      &\le \kappa\io u -\mu \io u^{2}\quad\text{on  }(0,\Tmax).
 \end{align*}
 An application of Hölder's inequality hence results in 
 \begin{align*}
  \dt \io u \leq \kappa \io u - \mu|\Omega|^{-1} \left(\io u\right)^{2}\quad\text{on  }(0,\Tmax),
 \end{align*}
 and an ODI comparison argument yields the result. 
\end{proof}

The logistic term in \eqref{sys} entails a spatio-temporal estimate for $u^{2}$, which will prove useful later on (see Section \ref{kap:1d}).
\begin{lemma}\label{u2}
 There is a constant $C>0$ such that
 \begin{align*}
  \inttnullt \!\!\io u^{2} \leq C 
 \end{align*}
for all $t\in [0,\Tmax)$.
\end{lemma}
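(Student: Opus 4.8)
The plan is to integrate the first equation of \eqref{sys} over $\Om$ and exploit the logistic absorption term $-\mu u^2$ to extract a bound on $\int u^2$ over unit-length time intervals, combining it with the already-established $L^1$-bound on $u$ from Lemma \ref{u}. First I would recall that, by the Neumann boundary conditions and \eqref{cond:f},
\begin{align*}
 \dt \io u \le \kappa\io u - \mu\io u^2 \quad\text{on }(0,\Tmax),
\end{align*}
so that rearranging gives $\mu\io u^2 \le -\dt\io u + \kappa\io u$ pointwise in time.

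Next I would integrate this inequality in time over the interval $((t-1)_+,t)$ for a fixed $t\in[0,\Tmax)$. The term $-\int_{(t-1)_+}^t \dt\io u$ telescopes to $\io u(\cdot,(t-1)_+) - \io u(\cdot,t)$, which is bounded above by $\io u(\cdot,(t-1)_+) \le m$ using Lemma \ref{u} and nonnegativity of $u$ from Lemma \ref{geq0}. The term $\kappa\int_{(t-1)_+}^t \io u$ is bounded by $\kappa m$ since the time interval has length at most $1$ and $\io u(\cdot,s)\le m$ for all $s$. Hence
\begin{align*}
 \mu\inttnullt\!\!\io u^2 \le m + \kappa m,
\end{align*}
and dividing by $\mu>0$ yields the claim with $C := \frac{(1+\kappa)m}{\mu}$.

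There is no real obstacle here; the only points requiring a little care are that the solution is only $C^{2,1}$ on $\overline\Om\times(0,\Tmax)$ rather than up to $t=0$, so for $t\le 1$ one works on $(0,t)$ instead of $(t-1,t)$ and uses continuity of $\io u$ up to $t=0$ (from $u\in C^0(\overline\Om\times[0,\Tmax))$ together with $u_0\in C^0(\Ombar)$) to make sense of the boundary term, and that the constant $C$ must be chosen uniformly in $t$, which is exactly what the use of the time-independent mass bound $m$ delivers. The argument is essentially a standard ``integrate the dissipation'' step, and the whole proof is a few lines.
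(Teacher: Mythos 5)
Your proof is correct and takes essentially the same approach as the paper: integrate the first equation over $\Omega$, use \eqref{cond:f} and the Neumann boundary conditions to obtain the differential inequality, integrate in time over $((t-1)_+,t)$, and bound the resulting terms by the $L^1$-bound from Lemma \ref{u}. Even the final constant $C=\frac{(\kappa+1)m}{\mu}$ matches.
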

\begin{proof} 
Inserting \eqref{cond:f} into the first equation of \eqref{sys}, solving for $u^{2}$ and integrating over $\Om\times((t-1)_+,t)$, we obtain 
 \begin{align*}
  \inttnullt \!\!\io u^{2} 
  \le & \tel{\mu} \left( \kappa \inttnullt\io u -\io u(\cdot, t) + \io u(\cdot, t_0)\right)
  \leq \frac{(\kappa+1)}{\mu}m=:C,
 \end{align*}
where $m$ is taken from Lemma \ref{u}.
\end{proof}

One of the main tools for making use of such spatio-temporal estimates is given by the following simple lemma: 
\begin{lemma}\label{odi-spatiotemporal}
 For some $T\in (0,\infty]$ let $y\in C^1((0,T))\cap C^0([0,T))$, $h\in C^0([0,T))$, $C>0$, $a>0$ satisfy 
\[
 y'(t)+ay(t) \le h(t), \qquad \int_{(t-1)_+}^t h(s)\diff s\le C
\]
for all $t\in(0,T)$. Then $y\le y(0)+\f{C}{1-e^{-a}}$ throughout $(0,T)$.
\end{lemma}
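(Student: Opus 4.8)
The plan is to pass from the differential inequality to its variation-of-constants (Duhamel) form and then to turn the ``windowed'' bound on $h$ into a bound on the arising convolution integral by splitting $(0,t]$ into unit-length intervals and summing a geometric series. First I would multiply $y'+ay\le h$ by the integrating factor $e^{at}$ to obtain $\frac{\diff}{\diff t}\bigl(e^{at}y(t)\bigr)\le e^{at}h(t)$ on $(0,T)$. Since $y\in C^1((0,T))\cap C^0([0,T))$ and $h\in C^0([0,T))$, integrating this over $(\eps,t)$ for $0<\eps<t<T$ and letting $\eps\downarrow 0$ (the boundary term $e^{a\eps}y(\eps)$ converges to $y(0)$ by continuity of $y$ at $0$, and the integrand is continuous up to $s=0$) yields
\[
 y(t)\le e^{-at}y(0)+\int_0^t e^{-a(t-s)}h(s)\,\diff s\qquad\text{for all }t\in(0,T).
\]
(Equivalently, one may compare $y$ with the solution $z$ of $z'+az=h$, $z(0)=y(0)$, noting $y\le z$ on $(0,T)$ because $(y-z)'+a(y-z)\le 0$ and $(y-z)(0)=0$, and then estimate the explicit $z$.) Since $e^{-at}y(0)\le y(0)$ — the case $y(0)\ge 0$ being the one relevant in the applications — it remains to bound the convolution integral by $\frac{C}{1-e^{-a}}$.

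For this, I would fix $t\in(0,T)$ and decompose $(0,t]=\bigcup_{k=0}^{\lceil t\rceil-1} I_k$ into the pairwise disjoint intervals $I_k:=\bigl((t-k-1)_+,\,t-k\bigr]$, each of length at most $1$ and with left endpoint exactly $\bigl((t-k)-1\bigr)_+$. On $I_k$ one has $t-s\ge k$, hence $e^{-a(t-s)}\le e^{-ak}$, while $\int_{I_k}h(s)\,\diff s\le C$ by the hypothesis applied at the time $t-k\in(0,T)$; therefore
\[
 \int_0^t e^{-a(t-s)}h(s)\,\diff s=\sum_{k=0}^{\lceil t\rceil-1}\int_{I_k}e^{-a(t-s)}h(s)\,\diff s\le C\sum_{k=0}^{\infty}e^{-ak}=\frac{C}{1-e^{-a}},
\]
which, combined with the Duhamel estimate, gives $y(t)\le y(0)+\frac{C}{1-e^{-a}}$, as claimed.

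\textbf{Expected difficulty.} I do not expect a substantial obstacle; this is genuinely an elementary lemma. The two points that need a little attention are the justification of the variation-of-constants formula up to the initial time under the minimal regularity assumed for $y$ (handled by the limiting argument above), and making sure the unit-interval decomposition is aligned with the windows $\bigl((t-k)-1\bigr)_+\le s\le t-k$ appearing in the hypothesis, so that $\int_{I_k}h\le C$ can legitimately be invoked for \emph{every} $k$ — including the last, possibly shorter, interval, where it reads $\int_0^{t-k}h\le C$ and still follows from the hypothesis since there $(t-k-1)_+=0$.
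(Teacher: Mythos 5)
Your proof is correct and follows essentially the same route as the paper's: variation of constants (Duhamel) to obtain $y(t)\le e^{-at}y(0)+\int_0^t e^{-a(t-s)}h(s)\,\diff s$, then a decomposition of the convolution integral into unit-length windows on which the hypothesis on $h$ applies, followed by summing the geometric series $\sum e^{-ak}$. The paper phrases the decomposition after the substitution $s\mapsto t-s$ (splitting $[0,\infty)$ into $[k,k+1]$ with $h$ extended by zero) while you split $(0,t]$ directly into $((t-k-1)_+,t-k]$, but these are the same estimate; you also explicitly flag the implicit use of $y(0)\ge 0$ in discarding the factor $e^{-at}$, which the paper leaves tacit.
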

\begin{proof}
 From the variation-of-constants formula, we can conclude that
\[
 y(t)\le y(0)e^{-at} + \int_0^t e^{-as} h(t-s) \diff s,\quad  t\in(0,T).
\]
If, for convenience of notation, we let $h(s):=0$ for $s\le 0$, we have $\int_k^{k+1} h(t-s) \diff s\le C$ for any $k\in ℕ_0$ and $t\in(0,T)$, and can estimate 
\[
 y(t)\le y(0) + \sum_{k=0}^{\infty} \int_k^{k+1} e^{-as} h(t-s) \diff s \le y(0) + C \sum_{k=0}^\infty e^{-ak}, \qquad t\in(0,T). \qedhere
\]
\end{proof}

Also for $v$ the differential equation directly entails some decay properties: 
\begin{lemma}\label{v}
 For every $p\in [1,\infty)$ the map $(0,\Tmax)\ni t\mapsto\norm{p}{v(\cdot,t)}^p$ is monotone decreasing. In particular, $\norm{p}{v(\cdot,t)}\leq\norm{p}{v_0}$ for all $t\in[0,\Tmax)$.
\end{lemma}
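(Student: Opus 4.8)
The plan is to test the functional $\io v^p$ against the $v$-equation and exploit the sign of the consumption term $-uv$ together with the homogeneous Neumann boundary condition. Concretely, for $p\in[1,\infty)$ I would differentiate $\io v^p$ in time along the solution on $(0,\Tmax)$, which is legitimate by the regularity provided by Theorem \ref{satz:lok} (in particular $v\in C^{2,1}(\Ombar\times(0,\Tmax))$ and $v>0$ by Lemma \ref{geq0}, so the powers $v^p$ and $v^{p-2}$ are smooth). This gives
\[
\dt \io v^p = p\io v^{p-1} v_t = p\io v^{p-1}\bigl(\Delta v - uv\bigr).
\]

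Next I would integrate the diffusion term by parts: since $\partial_\nu v=0$ on $\partial\Omega$, the boundary contribution vanishes and
\[
p\io v^{p-1}\Delta v = -p(p-1)\io v^{p-2}|\nabla v|^2 \le 0,
\]
using $p\ge 1$ so that $p(p-1)\ge 0$. The remaining term is $-p\io v^{p-1}\cdot uv = -p\io u v^p \le 0$, because $u\ge 0$ and $v>0$ by Lemma \ref{geq0}. Hence $\dt \io v^p \le 0$ on $(0,\Tmax)$, which is exactly the asserted monotone decrease of $t\mapsto \norm{p}{v(\cdot,t)}^p$. The ``in particular'' statement then follows by integrating this inequality from $0$ to $t$ and taking $p$-th roots: $\norm{p}{v(\cdot,t)}^p \le \norm{p}{v_0}^p$, hence $\norm{p}{v(\cdot,t)}\le\norm{p}{v_0}$; here one should note that $v_0\in W^{1,\infty}(\Omega)\subset L^p(\Omega)$ on the bounded domain $\Omega$, and continuity of $v$ up to $t=0$ ensures $\io v(\cdot,t)^p \to \io v_0^p$ as $t\searrow 0$, so the bound extends to $t=0$.

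There is essentially no real obstacle here; the only point requiring a little care is the justification of differentiation under the integral sign and the integration by parts near $t=0$, where one only has $v_0\in W^{1,\infty}$ rather than $C^1$. This is handled by noting that all manipulations are performed on the open interval $(0,\Tmax)$, where $v$ is smooth by parabolic regularity, and then passing to the limit $t\searrow 0$ using only continuity of $v$ in $C^0(\Ombar\times[0,\Tmax))$; alternatively one works on $(t_0,t)$ for arbitrary $0<t_0<t$ and lets $t_0\searrow 0$. Both the nonpositivity of the diffusive contribution (from $p(p-1)\ge 0$) and of the absorptive contribution (from $u\ge 0$, $v>0$) are immediate once Lemma \ref{geq0} is invoked, so the proof is short.
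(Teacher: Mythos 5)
Your proposal is correct and follows exactly the same route as the paper: differentiate $\io v^p$ in time, integrate the diffusion term by parts using the Neumann boundary condition to get $-p(p-1)\io v^{p-2}|\nabla v|^2\le 0$, and observe that $-p\io uv^p\le 0$ by Lemma \ref{geq0}. The extra care you take regarding regularity near $t=0$ is sound but not spelled out in the paper, which works directly on $(0,\Tmax)$.
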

\begin{proof}
 If we consider the derivative of said mapping, integration by parts results in 
 \begin{align*}
  \dt\io v^p&=p\io v^{p-1}v_t=p\io v^{p-1}Δ v-p\io uv^p \\&=-p(p-1)\io v^{p-2}|\nabla v|^2-p\io u v^p\leq 0\quad\text{on  }(0,\Tmax)
 \end{align*}
due to Lemma \ref{geq0}.
\end{proof}
Also $\norm{\infty}{v(\cdot,t)}$ can be controlled by the size of $v_0$: 
\begin{lemma}\label{vinfty}
 We have $\norm{\infty}{v(\cdot,t)}\leq \norm{\infty}{v_0}$ for every $t\in[0,\Tmax)$.
\end{lemma}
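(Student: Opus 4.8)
The plan is to derive an $L^\infty$ bound for $v$ directly from the scalar equation $v_t = \Delta v - uv$ together with Lemma \ref{geq0}. Since $u \ge 0$ on $\Omega \times [0,\Tmax)$, the absorption term $-uv$ is nonpositive wherever $v \ge 0$, so intuitively $v$ cannot exceed its initial supremum. First I would make this rigorous using the comparison principle: let $M := \norm{\infty}{v_0}$ and consider the constant function $\bar v \equiv M$. Then $\bar v_t = 0 = \Delta \bar v$ and $-u\bar v \le 0 = \bar v_t - \Delta \bar v$, so $\bar v$ is a supersolution of the second equation in \eqref{sys}, while the homogeneous Neumann condition $\partial_\nu \bar v = 0$ is trivially satisfied and $\bar v(\cdot,0) = M \ge v_0 = v(\cdot,0)$. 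Invoking the comparison theorem (Theorem \ref{vs}, whose applicability to the equations of \eqref{sys} is ensured by Remark \ref{bem:vs}, as already used in the proof of Lemma \ref{geq0}) on any finite subinterval $[0,\hat t] \subset [0,\Tmax)$ then gives $v \le M$ there, and since $\hat t < \Tmax$ was arbitrary, $\norm{\infty}{v(\cdot,t)} \le \norm{\infty}{v_0}$ for all $t \in [0,\Tmax)$.

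An alternative, self-contained route avoiding the comparison theorem would be a testing argument: for even integers $p$ one has from the computation already carried out in the proof of Lemma \ref{v} that $\dt \io v^p = -p(p-1)\io v^{p-2}|\nabla v|^2 - p\io u v^p \le 0$, hence $\norm{p}{v(\cdot,t)} \le \norm{p}{v_0} \le |\Omega|^{1/p}\norm{\infty}{v_0}$; letting $p \to \infty$ yields $\norm{\infty}{v(\cdot,t)} \le \norm{\infty}{v_0}$. I would probably present the comparison-principle version as the main argument since it is cleanest and the comparison machinery is already in place, but mention that it also follows by passing to the limit in Lemma \ref{v}.

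There is essentially no serious obstacle here — this is a soft a priori bound. The only point requiring a modicum of care is the justification that the comparison theorem genuinely applies to the $v$-equation of \eqref{sys} despite the coupling to $u$; this is exactly the content of Remark \ref{bem:vs}, which treats $u$ as a given nonnegative coefficient and has already been used for Lemma \ref{geq0}, so no new work is needed. I would keep the proof to two or three lines, writing simply that $\bar v \equiv \norm{\infty}{v_0}$ is a supersolution of $v_t = \Delta v - uv$ under homogeneous Neumann boundary conditions by Lemma \ref{geq0}, and that the claim follows from the comparison theorem \ref{vs} (cf.\ Remark \ref{bem:vs}) on arbitrary finite time intervals.
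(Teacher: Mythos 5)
Your main argument (constant supersolution $\bar v \equiv \Norm{L^\infty(\Omega)}{v_0}$ combined with Theorem~\ref{vs} and Remark~\ref{bem:vs}) is correct and is precisely the proof given in the paper; the alternative via $p\to\infty$ in Lemma~\ref{v} is also fine but not what the paper uses.
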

\begin{proof}
 Defining $\bar{v}(x,t):=\norm{\infty}{v_0}$, due to the nonnegativity of $u$ we see 
 \begin{align*}
  \bar{v}_t=0&\geq -u\bar{v}=Δ \bar{v}-u\bar{v}\quad\text{in }\Omega\times(0,\Tmax),\\
  \bar{v}(x,0)&\geq v_0(x)\quad\text{for all } x\in\Omega \quad\text{as well as}\\
  \partial_{\nu}\bar{v}(x,t)&=0\quad\text{for all } x\in\Omega, t\in(0,\Tmax).
 \end{align*}
Hence the comparison theorem (Theorem \ref{vs}) implies $v\leq \norm{\infty}{v_0}$ on  $\Omega\times [0,\Tmax)$ and thus in particular $\norm{\infty}{v(\cdot,t)}\leq\norm{\infty}{v_0}$ for every $t\in [0,\Tmax)$.
\end{proof}

\section{A non-singular system}\label{sec:vpos}
The singularity in the PDE system is bothersome in further calculations. It would be helpful to estimate $v$ from below by a positive constant in order to ensure the existence of some $d>0$ with $\tel{v}\leq d$. In contrast to the system \eqref{intro:sys-prod}, which has been considered in \cite{winkler_singular}, it will not be obvious that some $c>0$ with $v\geq c>0$ in $\Omega\times(0,\Tmax)$ exists. Under certain conditions, however, it is possible to show an estimate of this kind locally in time. As in \cite{locallybounded}, we introduce  $w:=-\log\left(\frac{v}{\norm{\infty}{v_0}}\right)$ and then prove time-local boundedness of $w$. The advantage of this approach is that we obtain a new system without singularity. Keeping the notation and general assumptions of Section \ref{kap:eigenschaften}, throughout this section we let $w$ be as just specified.

\begin{lemma}\label{w1}
 For $w:=-\log\left(\frac{v}{\norm{\infty}{v_0}}\right)$ we have $w\geq 0$ and $w_t=Δ w-|\nabla w|^2+u$ on  $\Omega\times (0,\Tmax)$.  
\end{lemma}
\begin{proof}
 According to Lemma \ref{vinfty}, $v\leq \norm{\infty}{v_0}$, hence $\frac{v}{\norm{\infty}{v_0}}\leq 1$ and thus $w\geq 0$. 
 Moreover, on  $\Omega\times(0,\Tmax)$,
 \begin{align*}
  w_t&=-\frac{\norm{\infty}{v_0}v_t}{v\norm{\infty}{v_0}}=-\frac{v_t}{v},
&  \nabla w &= -\frac{\norm{\infty}{v_0}\nabla v}{v\norm{\infty}{v_0}}=-\frac{\nabla v}{v},
 \end{align*}
 which entails 
 \begin{align*}
  Δ w&=\nabla \cdot \nabla w=\nabla\cdot\left(-\frac{\nabla v}{v}\right)=-\frac{Δ v}{v}+\frac{|\nabla v|^2}{v^2}=-\frac{Δ v}{v}+|\nabla w|^2.
 \end{align*}
 Together with $v_t=Δ v-uv$ this proves 
 \begin{align*}
  w_t=-\frac{v_t}{v}=-\frac{Δ v}{v}+u=Δ w-|\nabla w|^2+u
 \end{align*}
 on  $\Omega\times(0,\Tmax)$.
\end{proof}

Accordingly, the pair $(u,w)$ solves the PDE system 
\begin{align}\label{sysw}
 \begin{array}{rlll}
 u_t&=Δ u +\chi\nabla\cdot(u\nabla w)+f(u)&\text{in }\Omega\times (0,\Tmax),\\
 w_t&=Δ w -|\nabla w|^2+u&\text{in }\Omega\times (0,\Tmax),\\
 \partial_{\nu}u&=\partial_{\nu}w=0&\text{in }\partial\Omega\times (0,\Tmax),\\
 u(\cdot,0)&=u_0, \quad w(\cdot,0)=w_0:=-\log\left(\frac{v_0}{\norm{\infty}{v_0}}\right)&\text{in }\Omega.&
 \end{array}
\end{align}
For its solution $(u,w)$ we can show the following proposition: 
\begin{lemma}\label{w}
 If for $T\leq \Tmax$ with $T<\infty$ there are a constant $C=C(T)>0$ and some $p\ge1$, satisfying $p>\frac{n}{2}$, with $\norm{p}{u(\cdot,t)}\leq C$ on  $(0,T)$, then $w$ is bounded on  $\Omega\times(0,T)$.
\end{lemma}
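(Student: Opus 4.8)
The plan is to establish boundedness of $w$ from the second equation of \eqref{sysw} via a variation-of-constants representation against the Neumann heat semigroup $(\euler^{tΔ})_{t\ge0}$, using the assumed $L^p$-bound on $u$ with $p>\f n2$ to control the inhomogeneity $u$, while discarding the favourable term $-|\na w|^2$. More precisely, since $w\ge 0$ by Lemma \ref{w1} and $-|\na w|^2\le 0$, a comparison argument (Theorem \ref{vs}, applicable via Remark \ref{bem:vs}) against the solution $\tilde w$ of $\tilde w_t = Δ\tilde w + u$ with $\tilde w(\cdot,0)=w_0$ gives $0\le w\le \tilde w$ on $\Omega\times(0,T)$; it therefore suffices to bound $\tilde w$. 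For this I would write
\[
 \tilde w(\cdot,t) = \euler^{tΔ} w_0 + \int_0^t \euler^{(t-s)Δ} u(\cdot,s)\diff s, \qquad t\in(0,T),
\]
and estimate both summands in $L^\infty(\Omega)$.

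The first term is harmless: $w_0=-\log\kl{\f{v_0}{\Norm{\Lom\infty}{v_0}}}\in L^\infty(\Omega)$ because $v_0\in W^{1,\infty}(\Omega)$ is positive on the compact set $\Ombar$, so $\Norm{\Lom\infty}{\euler^{tΔ}w_0}\le \Norm{\Lom\infty}{w_0}$ by the maximum principle for the heat semigroup. For the second term I would invoke the standard $L^p$–$L^\infty$ smoothing estimate for the Neumann heat semigroup: there are $C_1>0$ and $\lambda>0$ (one may even take $\lambda=0$ here since $T<\infty$) such that $\Norm{\Lom\infty}{\euler^{\sigma Δ}\varphi}\le C_1(1+\sigma^{-\f n{2p}})\euler^{-\lambda\sigma}\Norm{\Lom p}{\varphi}$ for all $\varphi\in L^p(\Omega)$ and $\sigma>0$. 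Applying this with $\varphi=u(\cdot,s)$ and $\sigma=t-s$, and using $\Norm{\Lom p}{u(\cdot,s)}\le C$ on $(0,T)$, yields
\[
 \Norm{\Lom\infty}{\int_0^t \euler^{(t-s)Δ}u(\cdot,s)\diff s} \le C_1 C\int_0^t \kl{1+(t-s)^{-\f n{2p}}}\diff s \le C_1 C\kl{T + \f{T^{1-\f n{2p}}}{1-\f n{2p}}},
\]
where the last integral is finite precisely because $p>\f n2$ forces the exponent $\f n{2p}<1$, so $(t-s)^{-\f n{2p}}$ is integrable near $s=t$. This is where the hypothesis $p>\f n2$ is essential — and it is really the only delicate point: if $p$ were only equal to $n/2$ the time integral would diverge logarithmically and one would need a more careful argument.

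Combining the two bounds gives $\Norm{\Lom\infty}{\tilde w(\cdot,t)}\le C(T)$ uniformly on $(0,T)$, hence $0\le w\le C(T)$ on $\Omega\times(0,T)$, which is the claim. An essentially equivalent route, avoiding the comparison step, is to test the $w$-equation directly with powers of $w$ (or to note that $\dt\io \euler^{qw}$ can be controlled), but the semigroup-plus-comparison argument is cleanest and keeps the dependence on $T$ transparent. I expect no further obstacles; the only thing to be careful about is that the heat-semigroup estimate as usually stated is for $\Om$ with smooth boundary, which is assumed, and that $u(\cdot,s)$ is genuinely in $L^p(\Om)$ for a.e.\ $s$, which follows from $u\in C^0(\Ombar\times[0,\Tmax))$.
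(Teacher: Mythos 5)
Your proposal follows essentially the same route as the paper — Duhamel representation, discard the sign-definite term $-|\nabla w|^2$, then the $L^p$–$L^\infty$ smoothing estimate with $p>\frac{n}{2}$ giving integrability of $(t-s)^{-n/(2p)}$ — so the estimates and the role of the hypothesis $p>\frac{n}{2}$ are identical. The one point I would flag is the justification of $w\le\tilde w$: you cite Theorem \ref{vs} (via Remark \ref{bem:vs}), but that comparison theorem is formulated for equations of the form $u_t=\Delta u+\nabla\cdot(b u)+f(x,t,u)$ and does not cover the gradient-quadratic term $-|\nabla w|^2$ in the $w$-equation, so it does not strictly apply here. The clean way to obtain the same conclusion — and what the paper does — is to write Duhamel's formula for $w$ itself,
\[
w(\cdot,t)=\euler^{t\Delta}w_0+\int_0^t\euler^{(t-s)\Delta}\bigl(u(\cdot,s)-|\nabla w(\cdot,s)|^2\bigr)\diff s,
\]
and note that order-preservation of $\euler^{\sigma\Delta}$ gives $\euler^{(t-s)\Delta}\bigl(-|\nabla w(\cdot,s)|^2\bigr)\le 0$, so that $w\le\euler^{t\Delta}w_0+\int_0^t\euler^{(t-s)\Delta}u(\cdot,s)\diff s$ directly; equivalently, $z:=\tilde w-w$ solves $z_t=\Delta z+|\nabla w|^2$ with vanishing initial and Neumann data, hence $z\ge 0$ by nonnegativity of the source under $\euler^{\sigma\Delta}$. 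With that repair your argument coincides with the paper's.
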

\begin{proof}
 Let $T\in(0,\Tmax]$, $T<\infty$ and $p>\frac{n}{2}$ with $\norm{p}{u(\cdot,t)}\leq C$ for all $t\in(0,T)$ and some $C=C(T)>0$. 
 By the variation-of-constants formula, $w$ can be represented as 
 \begin{align*}
  w(\cdot,t)&=\euler^{tΔ}w_0+\int_0^t\euler^{(t-s)Δ}\left(u(\cdot,s)-|\nabla w(\cdot,s)|^2\right)\diff s\\
  &\leq \euler^{tΔ}w_0+\int_0^t\euler^{(t-s)Δ}u(\cdot,s)\diff s\quad\text{for } t\in(0,T),
 \end{align*}
 for  $-|\nabla w(\cdot,s)|^2\leq 0$ immediately implies $\euler^{(t-s)Δ}\left(-|\nabla w(\cdot,s)|^2\right)\leq0$. 
 With this representation and semigroup estimates as in \cite[Lemma 1.3 (i)]{lplq}, we obtain $c_1>0$ such that for all $t\in(0,T)$
 \begin{align*}
  \norm{\infty}{w(\cdot,t)}&\leq \norm{\infty}{\euler^{tΔ}w_0}+\int_0^t\norm{\infty}{\euler^{(t-s)Δ}u(\cdot,s)}\diff s\\
  &\leq \norm{\infty}{w_0}+\int_0^t c_1\left(1+(t-s)^{-\frac{n}{2p}}\right)\norm{p}{u(\cdot,s)}\diff s\\
  &\leq \norm{\infty}{w_0}+c_1C\int_0^T \left(1+(t-s)^{-\frac{n}{2p}}\right)\diff s <\infty,
 \end{align*}
 since $p>\frac{n}{2}$ implies that $-\frac{n}{2p}>-1$ and thus finiteness of the integral. 
\end{proof}

In order to return to $v$, we state the following: 
\begin{lemma}\label{vpos}
 If for $T\leq \Tmax$, $T<\infty$ there are a constant $C=C(T)>0$ and some $p>\frac{n}{2}$, $p\ge 1$, with $\norm{p}{u(\cdot,t)}\leq C$ on  $(0,T)$, there are  $d=d(T)>0$, such that $v\geq d$ and in particular $\tel{v}\leq\tel{d}$ on  $\Omega\times (0,T)$.
\end{lemma}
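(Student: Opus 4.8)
The plan is to deduce Lemma~\ref{vpos} directly from Lemma~\ref{w} by unravelling the substitution $w=-\log(v/\Norm{\Lom\infty}{v_0})$. First I would invoke Lemma~\ref{w}: under the stated hypotheses (existence of $C=C(T)>0$ and $p\ge 1$ with $p>\f n2$ such that $\norm p{u(\cdot,t)}\le C$ on $(0,T)$), the function $w$ is bounded on $\Omega\times(0,T)$, say $\norm\infty{w(\cdot,t)}\le M$ for all $t\in(0,T)$ with some $M=M(T)>0$.

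Next I would simply solve the defining relation for $v$. Since $w=-\log(v/\Norm{\Lom\infty}{v_0})$, we have $v=\Norm{\Lom\infty}{v_0}\,\euler^{-w}$ pointwise on $\Omega\times(0,T)$. From $w\le M$ it follows that $\euler^{-w}\ge\euler^{-M}$, hence
\[
 v(x,t)=\Norm{\Lom\infty}{v_0}\,\euler^{-w(x,t)}\ge \Norm{\Lom\infty}{v_0}\,\euler^{-M}=:d>0
\]
for all $(x,t)\in\Omega\times(0,T)$. (The constant $d=d(T)$ is positive because $v_0>0$ in $\Ombar$ guarantees $\Norm{\Lom\infty}{v_0}>0$, and $M<\infty$.) Taking reciprocals gives $\tel v\le\tel d$ on $\Omega\times(0,T)$, which is the assertion.

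There is essentially no obstacle here: the lemma is a routine corollary of Lemma~\ref{w}, and the only points to be careful about are that $\Norm{\Lom\infty}{v_0}$ is genuinely positive (ensured by \eqref{cond:init}) and that the bound on $w$ from Lemma~\ref{w} is a bound on $\norm\infty{w(\cdot,t)}$ uniform in $t\in(0,T)$, so that the resulting lower bound $d$ for $v$ is likewise uniform on the whole cylinder $\Omega\times(0,T)$. The real work has already been done in Lemmas~\ref{w1} and~\ref{w}; this statement merely transfers that information back to the original variable $v$.
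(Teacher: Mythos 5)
Your proposal is correct and follows exactly the paper's own argument: invoke Lemma~\ref{w} to get a uniform upper bound for $w$ on $\Omega\times(0,T)$, then solve $w=-\log(v/\norm{\infty}{v_0})$ for $v$ to obtain $v\ge \norm{\infty}{v_0}\euler^{-M}=:d>0$. Nothing to add.
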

\begin{proof}
 By Lemma \ref{w} there are $D=D(T)>0$ with $w\leq D$ on  $\Omega\times(0,T)$. From the definition $w:=-\log\left(\frac{v}{\norm{\infty}{v_0}}\right)$ we directly obtain $v\geq \norm{\infty}{v_0}\euler^{-D}=:d>0$ on  $\Omega\times(0,T)$.
\end{proof}

Of course, this does not prove that $\tel v$ is bounded on finite time intervals in general. After all, the existence of some $p>\f n2$ such that $\norm{p}u$ is bounded, is non-obvious. 
Thanks to Lemma \ref{u}, however, at least in the one dimensional case 
the uniform positivity of $v$ on finite time intervals can be ensured: 

\begin{kor}\label{v1d}
 If $\Omega\subseteq \R$, for every finite $T\leq \Tmax$ there is $d>0$ satisfying $v\geq d$ on  $\Omega\times(0,T)$.
\end{kor}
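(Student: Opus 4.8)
The plan is to specialize Lemma \ref{vpos} to the case $n=1$, where it becomes essentially automatic. First I would observe that, since $n=1$, the threshold $\f n2=\f12$ is strictly below $1$, so $p:=1$ is an admissible exponent in Lemma \ref{vpos}: it satisfies both $p\ge1$ and $p>\f n2$. Next I would recall from Lemma \ref{u} that there is a constant $m>0$, independent of time, with $\norm1{u(\cdot,t)}\le m$ for all $t\in[0,\Tmax)$; restricting to $(0,T)$ for the given finite $T\le\Tmax$, this is precisely an estimate of the form $\norm p{u(\cdot,t)}\le C$ on $(0,T)$ with $p=1$ and $C:=m$. Applying Lemma \ref{vpos} with this $p$ and this $C$ then yields $d=d(T)>0$ such that $v\ge d$ (and hence $\tel v\le\tel d$) on $\Om\times(0,T)$, which is the assertion.

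I do not expect any genuine obstacle here: the corollary is nothing more than the remark that in one space dimension the universally available $L^1$-bound of Lemma \ref{u} already lies in a space $L^p$ with $p>\f n2$, so the hypothesis of Lemma \ref{vpos} is met for free. In dimension $n\ge2$ this shortcut fails, since one would then need control of $u$ in $L^p$ for some $p>\f n2\ge1$; securing such an estimate is exactly the work carried out in the later sections (culminating, under \eqref{conditiononmuchi}, in Lemma \ref{lm:dasistthm1}).
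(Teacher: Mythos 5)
Your proposal is correct and is essentially identical to the paper's own argument: both invoke the universal $L^1$-bound of Lemma \ref{u} and note that $1>\f n2=\f12$ when $n=1$, so Lemma \ref{vpos} applies directly with $p=1$.
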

\begin{proof}
 Lemma \ref{u} guarantees the boundedness of $\norm{1}{u(\cdot,t)}$ on  $(0,T)$. Since $1>\frac{n}{2}=\frac{1}{2}$, the claim immediately results from \ref{vpos}.
\end{proof}
Note that this does not imply boundedness of $v$ from below if $\Tmax=\infty$. 


\section{Global existence of classical solutions}\label{kap:global}
This section is concerned with globality of solutions. According to Theorem \ref{satz:lok}, given initial data  $u_0\in C^0(\overline{\Omega})$, $v_0\in W^{1,\infty}(\Omega)$ with $u_0\geq 0$, $v_0>0$ in $\overline{\Omega}$, there is a local solution $(u,v)$ to system \eqref{sys}. This solution is global (i.e. $\Tmax=\infty$), if for some $q>n$ for every $T\in(0,\Tmax)$, $T<\infty$, there is $C=C(T)$ such that 
\begin{equation}
 \norm{\infty}{u(\cdot,t)}+\wnorm{v(\cdot,t)}\leq C~\quad\text{for all } t\in(0,T).
\end{equation}
The existence of such a constant is what we are going to prove in this chapter. It depends on dimension $n$ and the size of the parameters $\chi$ and $\mu$. 
The approach in this chapter is related to that of \cite{winkler_singular}, where global classical solvability of \eqref{intro:sys-prod} with $f\equiv 0$ 
is shown for $0<\chi<\sqrt{\frac{2}{n}}$ in dimensions $n\geq 2$.\\
Unless otherwise specified (in some lemmata we will need further conditions on $χ$ and $μ$), we assume $μ>0$, $χ>0$, $κ\ge 0$ to be fixed and $u_0$, $v_0$ to be given as in \eqref{cond:init} and denote by $(u,v)$ the corresponding local solution and by $\Tmax$ its maximal time of existence, as provided by Theorem \ref{satz:lok}.

For the iterative procedure on which we will base the local-in-time boundedness result we use the following lemma resembling Lemma 2.4 of \cite{winkler_singular}.
\begin{lemma}\label{lm:gradv}
 Let $T\in (0,\Tmax]$, $r, q \in[1,\infty]$ and suppose 
 \begin{align*}
  \tel{2}+\frac{n}{2}\left(\tel{q}-\tel{r}\right)<1.
 \end{align*}
Then there is $C>0$ such that for all $t\in(0,T)$ we have 
 \begin{align*}
  \norm{r}{\nabla v(\cdot,t)}\leq C\left(1+\sup_{s\in (0,t)}\norm{q}{u(\cdot,s)}\right).
 \end{align*}
\end{lemma}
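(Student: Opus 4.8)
The plan is to estimate $\nabla v$ via the variation-of-constants representation $v(\cdot,t) = e^{t\Delta}v_0 + \int_0^t e^{(t-s)\Delta}(-u(\cdot,s)v(\cdot,s))\,\diff s$ together with standard smoothing estimates for the Neumann heat semigroup. The key point is that the source term $-uv$ in the $v$-equation is non-positive, and $v$ is bounded above by $\norm{\infty}{v_0}$ (Lemma \ref{vinfty}), so one can control $\Norm{L^q(\Omega)}{u v}$ by $\norm{\infty}{v_0}\,\norm{q}{u}$. This is what makes the bound linear in $\sup_{s\in(0,t)}\norm{q}{u(\cdot,s)}$.

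First I would recall the gradient estimate for the Neumann heat semigroup in the form (e.g.\ \cite[Lemma 1.3]{lplq}): there is $c>0$ such that $\norm{r}{\nabla e^{\tau\Delta}\varphi} \le c(1+\tau^{-\frac12-\frac n2(\frac1q-\frac1r)})\norm{q}{\varphi}$ for all $\tau>0$ and all $\varphi\in L^q(\Omega)$, and similarly $\norm{r}{\nabla e^{\tau\Delta}\varphi}\le c\Norm{W^{1,r}(\Omega)}{\varphi}$ for $\tau\in(0,1)$ applied to the initial-data term $\nabla e^{t\Delta}v_0$ (using $v_0\in W^{1,\infty}(\Omega)\hookrightarrow W^{1,r}(\Omega)$, so that term is bounded uniformly in $t$ on $(0,T)$, splitting $t$ into $t\le 1$ and $t>1$ if need be). Applying $\nabla$ to the Duhamel formula and taking $L^r$-norms gives
\begin{align*}
 \norm{r}{\nabla v(\cdot,t)} \le \norm{r}{\nabla e^{t\Delta}v_0} + \int_0^t \norm{r}{\nabla e^{(t-s)\Delta}\big(u(\cdot,s)v(\cdot,s)\big)}\,\diff s.
\end{align*}

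Then I would estimate the integrand by $c(1+(t-s)^{-\frac12-\frac n2(\frac1q-\frac1r)})\,\norm{q}{u(\cdot,s)v(\cdot,s)} \le c(1+(t-s)^{-\frac12-\frac n2(\frac1q-\frac1r)})\,\norm{\infty}{v_0}\,\norm{q}{u(\cdot,s)}$, pull the supremum $\sup_{s\in(0,t)}\norm{q}{u(\cdot,s)}$ out of the integral, and observe that the hypothesis $\tel{2}+\frac n2(\tel q-\tel r)<1$ ensures that the singular factor $(t-s)^{-\frac12-\frac n2(\frac1q-\frac1r)}$ is integrable near $s=t$; the resulting integral $\int_0^t(1+(t-s)^{-\frac12-\frac n2(\frac1q-\frac1r)})\,\diff s$ is bounded on $(0,T)$ by a constant depending on $T$ (finite since $T<\infty$; if $T=\infty$ one would instead note $\int_0^\infty e^{-\lambda_1(t-s)}(\cdots)\,\diff s<\infty$ using the Poincaré-type decay, but here we only claim the estimate for $t\in(0,T)$ with the constant allowed to depend on $T$). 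Collecting terms yields the claimed bound with $C=C(T)$. The only mild subtlety — hardly an obstacle — is handling the borderline exponent $r=\infty$ or $q=\infty$, which is covered by the same semigroup estimates, and making sure the initial-data term is absorbed into the constant; the non-positivity of $-uv$ plays no essential role here (unlike in Lemma \ref{w}) since we are not discarding it, only using the pointwise upper bound on $v$.
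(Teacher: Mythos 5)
Your argument follows the same Duhamel-plus-semigroup-smoothing route as the paper's proof, and the main ideas (using $v\le\norm{\infty}{v_0}$ to bound $\norm{q}{uv}$, pulling out the supremum, integrability of the kernel) are exactly right. There is, however, one small gap you have misidentified: the issue is not the ``borderline exponents $r=\infty$ or $q=\infty$,'' but the case $q>r$. The smoothing estimate $\norm{r}{\nabla e^{\tau\Delta}\varphi}\le c\left(1+\tau^{-\frac12-\frac n2(\frac1q-\frac1r)}\right)\norm{q}{\varphi}$ from \cite[Lemma 1.3]{lplq} is stated for $q\le r$; it does not directly apply when $q>r$, so your integrand bound is not justified there. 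The paper closes this by first treating $q\le r$ and then, for $q>r$, applying H\"older's inequality on the bounded domain ($\norm{r}{\cdot}\le |\Omega|^{\frac1r-\frac1q}\norm{q}{\cdot}$) so that the $q\le r$ result with $q$ replaced by $r$ applies; the hypothesis $\frac12+\frac n2(\frac1q-\frac1r)<1$ is automatic in that regime since the exponent is then $\le\frac12$. One more minor point: since the lemma allows $T=\Tmax$, which may be $+\infty$, the constant must not degenerate as $T\to\infty$; the paper achieves this by keeping the $e^{-\lambda(t-s)}$ decay factor from the semigroup estimate so that $\int_0^t(\cdots)e^{-\lambda(t-s)}\,\diff s$ is bounded uniformly in $t$. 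You mention this fix in a parenthetical, so the idea is present, but it should be the main route rather than a fallback, since the lemma as stated does cover $T=\infty$.
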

\begin{proof}
 First let $q\leq r$.\\
 Due to the variation-of-constants formula, for all $t\in(0,T)$ we have 
 \begin{align*}
  \norm{r}{\nabla v(\cdot,t)}\leq \norm{r}{\nabla \euler^{tΔ} v_0}+\int_0^t\norm{r}{\nabla \euler^{(t-s)Δ}(uv)(\cdot,s)}\diff s
 \end{align*}
 and the semigroup estimates of \cite[Lemma 1.3 (iii) and (ii)]{lplq} entail the existence of $c_1>0$ and $λ>0$ such that 
 \begin{align*}
  \norm{\infty}{\nabla \euler^{tΔ} v_0}&\leq c_1\norm{r}{\nabla v_0} \quad\text{and}\\
  \norm{r}{\nabla \euler^{(t-s)Δ}(uv)}&\leq c_1\left(1+(t-s)^{-\tel{2}-\frac{n}{2}\left(\tel{q}-\tel{r}\right)}\right)e^{-λ(t-s)}\norm{q}{uv}\\
  &\leq c_1\left(1+(t-s)^{-\tel{2}-\frac{n}{2}\left(\tel{q}-\tel{r}\right)}\right)e^{-λ(t-s)}\norm{q}{u}\norm{\infty}{v}\\&
  \leq c_1\left(1+(t-s)^{-\tel{2}-\frac{n}{2}\left(\tel{q}-\tel{r}\right)}\right)e^{-λ(t-s)}\norm{q}{u}\norm{\infty}{v_0}
 \end{align*}
 hold for all $t\in(0,T)$, $s\leq t$, where in the last step we have employed Lemma \ref{v}. Together this results in 
 \begin{align*}
   \norm{r}{\nabla v(\cdot,t)}&\leq c_1\norm{\infty}{\nabla v_0} \\
   &+c_1\norm{\infty}{v_0}\sup_{s\in(0,t)}\norm{q}{u(\cdot,s)}\int_0^t\left(1+(t-s)^{-\tel{2}-\frac{n}{2}\left(\tel{q}-\tel{r}\right)}\right)e^{-λ(t-s)}\diff s
 \end{align*}
 for all $t\in(0,T)$ and hence in the claim, because the integral $\int_0^{∞} \kl{1+s^{-\tel{2}-\frac{n}{2}\left(\tel{q}-\tel{r}\right)}}e^{-λs}\diff s$ is finite. \\
For $q>r$ the claim follows from the previous considerations together with Hölder's inequality applied with the exponent $\theta=\frac{q}{r}$: For some $c_2>0$ we have 
 \begin{align*}
  \norm{r}{\nabla v(\cdot,t)}&\leq c_2\left(1+\sup_{s\in (0,t)}\norm{r}{u(\cdot,s)}\right) \leq c_2\left(1+\sup_{s\in (0,t)}\norm{q}{u(\cdot,s)}|\Omega|^{\frac{q-r}{rq}}\right)
 \end{align*}
 for all $t\in(0,T)$.
\end{proof}

Repeated application of this lemma and semigroup estimates in the first equation ensures that boundedness of $\norm{p}{u(\cdot,t)}$ for some $p>\f n2$ is sufficient to guarantee boundedness (and hence, extensibility) of the solution. 

\begin{lemma}\label{lm:bd:u:nhalbe}
 Let $T\in (0,\Tmax]$, $T<\infty$, and suppose that with some $p\ge 1$ satisfying $p>\f n2$, 
\[
 \sup_{t\in(0,T)}\norm{p}{u(\cdot,t)}<\infty. 
\]
Then 
\[
 \sup_{t\in(0,T)}\kl{\norm{∞}{u(\cdot,t)}+\Norm{W^{1,\infty}(\Om)}{v(\cdot,t)}}<\infty.
\]
\end{lemma}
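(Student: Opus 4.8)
The strategy is the classical Neumann-heat-semigroup bootstrap, in the spirit of \cite{winkler_singular}. As a preliminary step, the hypothesis $\sup_{t\in(0,T)}\norm{p}{u(\cdot,t)}<\infty$ with $p>\f n2$ lets us invoke Lemma \ref{vpos}, which furnishes $d>0$ with $\f1v\le\f1d$ on $\Om\times(0,T)$; hence the chemotaxis gradient term in the first equation of \eqref{sys} loses its singularity, since $\norm{m}{(\f uv\na v)(\cdot,t)}\le\f1d\norm{q}{u(\cdot,t)}\,\norm{\rho}{\na v(\cdot,t)}$ whenever $\f1m=\f1q+\f1\rho$, and the $L^\rho$-norms of $\na v$ are controlled through Lemma \ref{lm:gradv} (which in turn rests on Lemma \ref{v}). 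The plan is then to raise the Lebesgue exponent of $u$ in finitely many steps up to $\sup_{t\in(0,T)}\norm{\infty}{u(\cdot,t)}<\infty$, while reading off $\sup_{t\in(0,T)}\norm{\infty}{\na v(\cdot,t)}<\infty$ as a by-product; combined with $\norm{\infty}{v(\cdot,t)}\le\norm{\infty}{v_0}$ from Lemma \ref{vinfty}, this is precisely the asserted $W^{1,\infty}$-bound.

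For the iteration I would fix a strictly increasing sequence $q_0:=p<q_1<q_2<\dots$ and show inductively that $\sup_{t\in(0,T)}\norm{q_k}{u(\cdot,t)}<\infty$ implies $\sup_{t\in(0,T)}\norm{q_{k+1}}{u(\cdot,t)}<\infty$. Given the bound at level $q_k$, Lemma \ref{lm:gradv} yields $\sup_{t\in(0,T)}\norm{\rho_k}{\na v(\cdot,t)}<\infty$ for any $\rho_k$ with $\f1{\rho_k}>\f1{q_k}-\f1n$, and hence $\norm{m_k}{(\f uv\na v)(\cdot,t)}$ is bounded on $(0,T)$ with $\f1{m_k}=\f1{q_k}+\f1{\rho_k}$. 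Writing out the variation-of-constants representation of the first equation of \eqref{sys} and applying $\norm{q_{k+1}}{\cdot}$, the chemotaxis term is estimated by the smoothing bounds for $\euler^{\sigma\Delta}\na\cdot$ from \cite[Lemma 1.3]{lplq}, with $\rho_k$ and $q_{k+1}$ chosen so that the time exponent $\f12+\f n2\big(\f1{m_k}-\f1{q_{k+1}}\big)$ stays below $1$ and the corresponding $s$-integral converges. The source term I would handle by splitting $f(u)=(f(u))_+-(f(u))_-$: since $\euler^{\sigma\Delta}$ is order preserving and $(f(u))_-\ge0$, the latter part only improves a pointwise upper estimate for the nonnegative function $u$ and may be dropped, while $0\le(f(u))_+\le\kappa u$ by \eqref{cond:f} contributes merely $\kappa\int_0^t\norm{q_{k+1}}{u(\cdot,s)}\diff s$, which is absorbed by a Gr\"onwall argument performed on $[0,T']$ for $T'\nearrow T$. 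In the regimes where the plain step would be too lossy (in particular for $p$ close to $\f n2$ in low dimensions), I would, in addition, interpolate $\norm{q_k}{u}$ between the a priori bound $\norm{1}{u}\le m$ from Lemma \ref{u} and the larger target norm, turning the estimate into the self-improving form $y\le C+Cy^{\gamma}$ with some $\gamma\in(0,1)$.

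Since each step increases the exponent by a controlled amount, after finitely many iterations one arrives at some $q_k>n$. Then Lemma \ref{lm:gradv} with $r=\infty$ gives $\sup_{t\in(0,T)}\norm{\infty}{\na v(\cdot,t)}<\infty$, so that $\f uv\na v$ is bounded in $L^{q_k}(\Om)$ uniformly on $(0,T)$, and one last use of the variation-of-constants formula together with the $L^{q_k}\!\to\!L^{\infty}$ smoothing of $\euler^{\sigma\Delta}\na\cdot$ — admissible because $q_k>n$, i.e. $\f12+\f n{2q_k}<1$ — yields $\sup_{t\in(0,T)}\norm{\infty}{u(\cdot,t)}<\infty$. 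Together with Lemma \ref{vinfty} this completes the proof.

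The main obstacle is the exponent bookkeeping in the induction: one has to verify that the admissible windows for $\rho_k$, $m_k$ and $q_{k+1}$ are non-empty at every step, that the gain per step is bounded away from zero so that the loop terminates after finitely many steps, and — most delicately, near the borderline $q_k\approx\f n2$ in dimensions $n=2,3$ — that the interpolation against the $L^1$-bound is both necessary and sufficient; this is exactly where the hypothesis $p>\f n2$ enters. A secondary technical point is to make the treatment of $f$ rigorous when $f$ decays faster than quadratically (so that $\norm{q}{f(u)}$ need not be finite a priori), which forces the argument to run through the pointwise comparison above rather than through a direct estimate of $f(u)$.
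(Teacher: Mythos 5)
Your proposal matches the paper's strategy at every structural level: a Neumann-heat-semigroup bootstrap in the spirit of \cite{winkler_singular}, with Lemma \ref{vpos} supplying the lower bound on $v$, Lemma \ref{lm:gradv} controlling $\nabla v$ in terms of the current $L^{q_k}$-bound on $u$, Hölder's inequality and \cite[Lemma 1.3]{lplq} promoting that to a bound on the next exponent, the order-preserving property of $e^{\sigma\Delta}$ used to discard the unsigned parts of the source, and Lemma \ref{vinfty} plus one last application of Lemma \ref{lm:gradv} delivering the $W^{1,\infty}$-bound on $v$. So the architecture is right, and you also correctly identified that the source must be handled through a pointwise upper estimate rather than a direct $\norm{q}{f(u)}$-bound.

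What you explicitly leave as an outline, however, is the crucial content: the exponent bookkeeping. The paper resolves it via a completely explicit recursion $p_{k+1}:=\varphi(p_k)$ with $\varphi(x)=\frac x2+\frac14\frac{nx}{n-x}$ for $x<n$ and $\varphi(x)=\infty$ for $x>n$, together with the two elementary facts that $x\mapsto\varphi(x)-x$ is positive and increasing on $(\frac n2,n)$ (so the per-step gain is bounded below by $\varphi(p_0)-p_0>0$ and the iteration reaches $\infty$ in finitely many steps) and that $\frac1{\varphi(x)}>\frac2x-\frac2n$ (which guarantees a simultaneous choice of $r$ with $\frac1r>\frac1{p_k}-\frac1n$ for Lemma \ref{lm:gradv} and $\frac12+\frac n2\big(\frac1{p_k}+\frac1r-\frac1{p_{k+1}}\big)<1$ for the smoothing estimate). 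Without some such concrete choice, ``fix a strictly increasing sequence'' plus ``verify the admissible windows are non-empty and the gain is bounded away from zero'' is a statement of what must be proved, not a proof. Your worry that the plain step could be ``too lossy for $p$ close to $\frac n2$'' is in fact unfounded: the gain is strictly positive whenever $p_0>\frac n2$, and the proposed interpolation against the $L^1$-bound from Lemma \ref{u} plays no role in the paper's argument. Finally, your treatment of $f$ via $(f(u))_\pm$ and Grönwall is correct but roundabout, and slightly delicate at $T=\Tmax$ where one would need to run the estimate on $[0,T']$ and let $T'\nearrow T$; the paper instead simply observes that \eqref{cond:f} forces $c_1:=\sup_{\xi>0}f(\xi)\le\frac{\kappa^2}{4\mu}<\infty$, so the order-preserving heat semigroup turns the entire source contribution into the fixed quantity $c_1T$, with no Grönwall and no exhaustion needed.
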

\begin{proof}
Let $φ(x):=\f x2+\f14 \f{nx}{n-x}=\f{x(3n-2x)}{4(n-x)}$ if $x<n$, $φ(x)=\infty$ if $x>n$. Then for positive $x$ one has $φ(x)=x$ if and only if $x=\f n2$, and $\f{d}{dx} [φ(x)-x]=\f14 \f{n(n-x)+nx}{(n-x)^2}-\f12=-\f{(x-\f{2+\sqrt2}{2}n)(x-\f{2-\sqrt2}2 n)}{2(n-x)^2}$, $x\in (0,n)$, which is positive whenever $x\in(\f n2,n)⊂(\f{2-\sqrt2}2n,n)$, so that for these $x$,
\begin{equation}\label{pkpeleq}
 x<φ(x)<φ(x)+(φ(x)-x) = \f12 \f1{\f1x-\f1n}.
\end{equation}
We pick some $p_0>\f n2$ such that $\sup_{s\in(0,T)}\norm{p_0}{u(\cdot,s)}<\infty$ and recursively define $p_{k+1}:=φ(p_k)$ for $k\inℕ_0$. [Should, for some $k_0\inℕ_0$, $p_{k_0}=n$, we instead let $p_{k_0+1}:=\f34n \in (\f{7-\sqrt{17}}4n,p_{k_0})$, which due to monotonicity of $φ$ and $φ(\f{7-\sqrt{17}}4n)=n$ ensures $p_{k_0+2}>n$ and hence $p_{k_0+3}=∞$.] 
From monotonicity of $x\mapsto φ(x)-x$ on $(p_0,n)$ we can conclude that $p_k=\infty$ for some finite $k$. We proceed to show that 
\begin{equation}\label{impl}
 \forall k\in ℕ_0:\quad  \kl{\sup_{s\in(0,T)} \norm{p_k}{u(\cdot,s)}<∞ \implies \sup_{t\in(0,T)} \norm{p_{k+1}}{u(\cdot,t)}<∞}.
\end{equation}
By the definition of $p_{k+1}$, we have that (either the exceptional case $p_k=n$ and thus $p_{k+1}<p_k$ has occured, for which \eqref{impl} is trivial and which shall hence be ignored in the following or that) $\f1{p_{k+1}}>\f2{p_k}-\f2n$ by \eqref{pkpeleq} if $p_k<n$ (and with $\f1{p_{k+1}}=\f1{∞}:=0$ if $p_k>n$). Therefore, apparently, 
\[
 -\f12-\f n2\kl{\f1{p_{k}}+\f1r-\f1{p_{k+1}}} > -\f12 -\f n2\kl{\f1{p_k}+\f1r-\f2{p_k}+\f2n} = 1, 
\]
if $\f1r=\f1{p_k}-\f1n$, and it is hence possible to choose $r\in[1,∞]$ such that 
\begin{align}
 \f1r &>\f1{p_k} - \f1n\label{rleq}\\
\text{and }\quad -\f12-\f n2\kl{\f1{p_k}+\f1r-\f1{p_{k+1}}} & > -1 \label{rgeq}.
\end{align}
From the supposed bound on $\norm{p_k}{u(\cdot,s)}$, $s\in(0,T)$, via Lemma \ref{lm:gradv} and due to \eqref{rgeq} we obtain that also $\sup_{s\in(0,T)}\norm{r}{\nabla v(\cdot,s)}$ is finite and hence by Hölder's inequality so is $\sup_{s\in(0,T)}\Norm{\f1{\f1{p_k}+\f1r}}{u(\cdot,s)\na v(\cdot,s)}\!\!$. 
With $c_1:=\sup_{ξ>0} f(ξ)$ we have 
\begin{align*}
 0 \leq u(\cdot,t) &= e^{tΔ} u_0 + \int_0^t e^{(t-s)Δ} \na \cdot\kl{\f uv \na v (\cdot,s)} ds + \int_0^t e^{(t-s)Δ} f(u(\cdot,s)) ds\\
 &\le \norm{\infty}{u_0} + c_1 T + \int_0^t e^{(t-s)Δ}\na \cdot \kl{\f uv \na v (\cdot,s)} ds \qquad \text{in } \Om \text{ for any } t\in(0,T).
\end{align*}
In order to estimate $\sup_{t\in(0,T)}\Norm{p_{k+1}}{u(\cdot,t)}$, it hence suffices to control $\Norm{p_{k+1}}{\int_0^te^{(t-s)Δ}\na\cdot\kl{\f uv\na v(\cdot,s)}ds}$, which we do by means of semigroup estimates (\cite[Lemma 1.3 (iv)]{lplq}), noting that with $c_2>0$ being the constant given there we have 
\begin{align}\label{hereweneedthelowerboundforv}
 \int_0^t &\norm{p_{k+1}}{e^{(t-s)Δ}\na \cdot\kl{\f uv\na v(\cdot,s)}}ds \nn\\
&\le c_2c_3 \int_0^{t}\kl{1+(t-s)^{-\f12-\f n2(\f1{p_k}+\f1r-\f1{p_{k+1}})}} \norm{\f1{\f1{p_k}+\f1r}}{u(\cdot,s)\na v(\cdot,s)}ds, 
\end{align}
where $c_3>0$ is a constant such that $\f1v<c_3$ in $\Om\times(0,T)$, which exists due to $p_k>\f n2$ and Lemma \ref{vpos}. This concludes the proof of \eqref{impl} due to \eqref{rgeq}. As observed before, $p_k=\infty$ for some finite $k$; the boundedness assertion concerning $\Norm{W^{1,\infty}(\Om)}{v}$ thus results from Lemma \ref{vinfty} and Lemma \ref{lm:gradv}. 
%
%
%
\end{proof}


We are still in need of an estimate of $\norm{p}{u(\cdot,t)}$ for some $p>\f n2$ as starting point for the above iterative procedure. This estimate will be based on the following observation.

\begin{lemma}\label{dtupvq} 
 For all $p,q\in\R$, on  $(0,\Tmax)$ we have 
 \begin{equation*}\begin{split}
  \dt \io u^pv^q&\le -p(p-1)\io u^{p-2}v^q |\nabla u|^2+(p(p-1)\chi-2pq)\io u^{p-1}v^{q-1}\nabla u\cdot \nabla v \\
  &+(pq\chi-q(q-1))\io u^pv^{q-2}|\nabla v|^2+p\kappa\io u^pv^q-(\mu p+q)\io u^{p+1}v^q.
 \end{split}\end{equation*}
\end{lemma}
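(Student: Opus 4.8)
The statement is a pure computation: differentiate the functional $\io u^p v^q$ in time, use the PDEs for $u$ and $v$, integrate by parts, and collect terms. The plan is to compute $\dt \io u^pv^q = \io (p u^{p-1}v^q u_t + q u^p v^{q-1} v_t)$ and substitute $u_t = \Delta u + \chi\nabla\cdot(u\nabla w) + f(u)$ — or equivalently, working directly with the original system, $u_t = \Delta u - \chi\nabla\cdot\kl{\f uv\nabla v} + f(u)$ — and $v_t = \Delta v - uv$. I would work with the original (singular) formulation since all derivatives of $\log v$ expand cleanly once the integrations by parts are performed; the factor $u^{p-1}v^{q-1}$ etc. that appear are integrable without needing the positive lower bound on $v$, because this is a formal identity valid on $(0,\Tmax)$ where $v>0$ by Lemma \ref{geq0}.

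The key steps, in order: (1) The term $p\io u^{p-1}v^q\Delta u$: integrate by parts once to get $-p\io \nabla(u^{p-1}v^q)\cdot\nabla u = -p(p-1)\io u^{p-2}v^q|\nabla u|^2 - pq\io u^{p-1}v^{q-1}\nabla u\cdot\nabla v$. (2) The chemotaxis term $-p\chi\io u^{p-1}v^q\,\nabla\cdot\kl{\f uv\nabla v}$: integrate by parts to get $p\chi\io \nabla(u^{p-1}v^q)\cdot\f uv\nabla v = p\chi(p-1)\io u^{p-1}v^{q-1}\nabla u\cdot\nabla v + p\chi q\io u^p v^{q-2}|\nabla v|^2$. (3) The term $q\io u^p v^{q-1}\Delta v$: integrate by parts to get $-q\io\nabla(u^pv^{q-1})\cdot\nabla v = -pq\io u^{p-1}v^{q-1}\nabla u\cdot\nabla v - q(q-1)\io u^p v^{q-2}|\nabla v|^2$. (4) The remaining zeroth-order terms: $p\io u^{p-1}v^q f(u)$, bounded using \eqref{cond:f} by $p\kappa\io u^p v^q - \mu p\io u^{p+1}v^q$ (here nonnegativity of $u$, Lemma \ref{geq0}, and of $v^q\cdot u^{p-1}$ is used, so the inequality goes the right way), and $-q\io u^p v^{q-1}\cdot uv = -q\io u^{p+1}v^q$. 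Finally, add everything: the $|\nabla u|^2$ coefficient is $-p(p-1)$; the $\nabla u\cdot\nabla v$ coefficient is $-pq + p\chi(p-1) - pq = p(p-1)\chi - 2pq$; the $|\nabla v|^2$ coefficient is $p\chi q - q(q-1)$; the $u^pv^q$ coefficient is $p\kappa$; and the $u^{p+1}v^q$ coefficient is $-\mu p - q$, yielding exactly the claimed inequality.

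All boundary terms from the integrations by parts vanish: on $\partial\Omega$ we have $\partial_\nu u = \partial_\nu v = 0$, so each product $(\text{power of }u,v)\,\partial_\nu u$ or $\partial_\nu v$ is zero there. The only inequality (as opposed to equality) step is the use of \eqref{cond:f} in step (4); every other manipulation is an identity. One should note that in step (4) the sign of $u^{p-1}$ and $v^q$ must be nonnegative for the inequality $f(u)\le\kappa u - \mu u^2$ to be preserved after multiplication and integration — this holds by Lemma \ref{geq0} and continuity of the solution.

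\textbf{Main obstacle.} There is no real obstacle: the proof is bookkeeping of integrations by parts, and the only subtlety is checking that all the integrands (involving possibly negative powers $v^{q-1}$, $v^{q-2}$) are well-defined and that differentiation under the integral is justified — this is fine on any interval $[0,\tau]\subset[0,\Tmax)$ since $u\in C^{2,1}$, $v\in C^{2,1}$ and $v>0$ there, so $v$ is bounded below by a positive constant on $\overline\Omega\times[0,\tau]$ by compactness and continuity. The one place to be careful is tracking the three separate $\nabla u\cdot\nabla v$ contributions (two with coefficient $-pq$ each, one with coefficient $+p\chi(p-1)$) and combining them correctly into $p(p-1)\chi - 2pq$.
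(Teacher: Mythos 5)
Your proof is correct and follows essentially the same route as the paper: differentiate under the integral, substitute the PDEs, integrate each term by parts (using the homogeneous Neumann boundary conditions), bound $f(u)$ via \eqref{cond:f}, and collect coefficients, yielding precisely the same intermediate identities. The subtlety you flag — that $u^{p-1}v^q\ge0$ is needed for the inequality from \eqref{cond:f} to survive multiplication and integration — is indeed the only non-equality step, and your bookkeeping of the three $\nabla u\cdot\nabla v$ contributions matches the paper's.
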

\begin{proof}
A straightforward calculation resting on integration by parts and the Neumann boundary conditions yields
 \begin{align*}
  \dt \io u^pv^q=&\io pu^{p-1}u_tv^q+\io qu^pv^{q-1}v_t\\
  =&p\io u^{p-1}v^qΔ u - p\chi\io u^{p-1}v^q\kreuz+p \io u^{p-1}f(u)v^q\\&+q\io u^pv^{q-1}Δ v-q\io u^{p+1}v^q\\
  \le &-p\io \nabla(u^{p-1}v^q)\cdot\nabla u + p\chi\io  \frac{u}{v}\nabla(u^{p-1}v^q)\cdot\nabla v + p\kappa\io u^p v^q \\&-\mu p \io u^{p+1}v^q-q\io\nabla(u^pv^{q-1})\cdot\nabla v - q\io u^{p+1}{v^q}\\
  =&-p(p-1)\io u^{p-2}v^q|\nabla u|^2-pq\io u^{p-1}v^{q-1}\nabla u \cdot \nabla v \\&+ p(p-1)\chi\io u^{p-1}v^{q-1}\nabla u \cdot \nabla v + pq\chi \io u^pv^{q-2}|\nabla v|^2+p\kappa\io u^p v^q \\&- (\mu p +q)\io u^{p+1}v^q - pq\io u^{p-1}v^{q-1}\nabla u \cdot \nabla v -q(q-1)\io u^p v^{q-2}|\nabla v|^2\\
  =&-p(p-1)\io u^{p-2}v^q|\nabla u|^2+(p(p-1)\chi-2pq)\io u^{p-1}v^{q-1}\nabla u \cdot \nabla v\\ &+(pq\chi-q(q-1))\io u^pv^{q-2}|\nabla v|^2+p\kappa\io u^p v^q- (\mu p +q)\io u^{p+1}v^q
 \end{align*}
 on  $(0,\Tmax)$.
\end{proof}

Next, we transform this differential inequality into a bound on $\io u^pv^q$, where we will, in fact, use a negative exponent $q$. 

\begin{lemma}\label{upvr} 
 If $p>1$ and $r>0$ satisfy $p\in \left(1,\tel{\chi^2}\right)$ and $r\in (r_-,\min\{r_+,\mu p\})$, where  
 \begin{align*}
  r_{\pm}=\frac{p-1}{2}\left(1\pm\sqrt{1-p\chi^2}\right),
 \end{align*}
 and $T\in(0,\Tmax]$, $T<\infty$, then there is $C=C(T)>0$ such that 
 \begin{align*}
  \io u^pv^{-r}\leq C \quad\text{on  }(0,T).
 \end{align*}
\end{lemma}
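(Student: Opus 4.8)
The plan is to track the evolution of $\io u^pv^{-r}$ by invoking Lemma~\ref{dtupvq} with the choice $q=-r$, and to exploit the hypotheses on $p$ and $r$ to arrange that every gradient contribution on the right-hand side becomes nonpositive, leaving only a term that is a multiple of $\io u^pv^{-r}$ itself; a Grönwall argument on the resulting differential inequality then settles the claim on any finite time interval.

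Concretely, setting $y(t):=\io u^pv^{-r}(\cdot,t)$ and substituting $q=-r$ into Lemma~\ref{dtupvq} yields, on $(0,\Tmax)$,
\begin{align*}
 y'(t)\le{}&-p(p-1)\io u^{p-2}v^{-r}|\na u|^2+\bigl(p(p-1)\chi+2pr\bigr)\io u^{p-1}v^{-r-1}\na u\cdot\na v\\
 &-\bigl(pr\chi+r(r+1)\bigr)\io u^pv^{-r-2}|\na v|^2+p\kappa\,y(t)-(\mu p-r)\io u^{p+1}v^{-r}.
\end{align*}
Since $r<\mu p$, the final integral carries a nonpositive prefactor and can simply be dropped; it thus remains to show that the three gradient integrals together contribute something $\le 0$.

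This is the heart of the matter. Pulling the common factor $u^{p-2}v^{-r-2}\ge 0$ out of the three gradient integrands and abbreviating $\xi:=v\na u$, $\eta:=u\na v$, their sum equals $u^{p-2}v^{-r-2}Q(\xi,\eta)$, where
\[
 Q(\xi,\eta)=-p(p-1)|\xi|^2+\bigl(p(p-1)\chi+2pr\bigr)\,\xi\cdot\eta-\bigl(pr\chi+r(r+1)\bigr)|\eta|^2 .
\]
As $p>1$, the form $Q$ is negative semidefinite precisely when its discriminant is $\le 0$, i.e.\ when $\bigl(p(p-1)\chi+2pr\bigr)^2\le 4p(p-1)\bigl(pr\chi+r(r+1)\bigr)$; here the term $4p^2(p-1)\chi r$ cancels on both sides, and what remains reduces to $r^2-(p-1)r+\tfrac14 p(p-1)^2\chi^2\le 0$. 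The roots of this quadratic in $r$ are exactly $r_\pm=\tfrac{p-1}{2}\bigl(1\pm\sqrt{1-p\chi^2}\bigr)$, which are real thanks to $p<1/\chi^2$, so for $r\in(r_-,r_+)$ one has $Q\le 0$ pointwise and the three gradient integrals sum to a nonpositive quantity that may be discarded. I expect this discriminant bookkeeping — routine but to be carried out exactly — to be essentially the only computation the proof requires.

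After discarding those terms we are left with $y'(t)\le p\kappa\,y(t)$ on $(0,\Tmax)$. Here $y$ is finite and of class $C^1$ on $(0,\Tmax)$, since $(u,v)$ is smooth on $\Ombar\times(0,\Tmax)$ with $v>0$ there (Lemma~\ref{geq0}, Remark~\ref{bem:vs}); moreover $y$ extends continuously to $t=0$ with $y(0)=\io u_0^pv_0^{-r}<\infty$, because $u_0\in C^0(\Ombar)$ while $v_0$ is continuous and strictly positive on the compact set $\Ombar$, so $v_0$ (and hence $v$ near $t=0$) is bounded below by a positive constant. Consequently $t\mapsto y(t)\,e^{-p\kappa t}$ is nonincreasing, which gives $y(t)\le y(0)\,e^{p\kappa t}\le y(0)\,e^{p\kappa T}=:C(T)$ for all $t\in(0,T)$, as claimed. (Should $p<2$, the apparent singularity of $u^{p-2}|\na u|^2$ at zeros of $u$ is harmless and is already accounted for in Lemma~\ref{dtupvq}.)
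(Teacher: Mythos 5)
Your proof is correct and follows essentially the same route as the paper: substitute $q=-r$ into Lemma~\ref{dtupvq}, verify that the three gradient terms are collectively nonpositive under the stated hypotheses on $p$ and $r$, drop the $(r-\mu p)\io u^{p+1}v^{-r}$ term, and conclude by Gr\"onwall. Your pointwise quadratic-form viewpoint with the discriminant condition $b^2\le 4ac$ is algebraically identical to the paper's Young's-inequality absorption of the cross term into the $|\nabla u|^2$ term; both reduce to the same inequality $r^2-(p-1)r+\tfrac14 p(p-1)^2\chi^2<0$, whose roots are precisely $r_\pm$.
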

\begin{proof}
Inserting $q=-r$ in Lemma \ref{dtupvq}, on $(0,\Tmax)$ we obtain 
 \begin{align}
  \dt \io u^pv^{-r} \le &-p(p-1)\io u^{p-2}v^{-r} |\nabla u|^2\nn \\
  &+(p(p-1)\chi+2pr)\io u^{p-1}v^{-r-1}\nabla u \cdot \nabla v \nn \\
  &-(pr\chi+r(r+1))\io u^pv^{-r-2}|\nabla v|^2\nn\\
  &+p\kappa\io u^pv^{-r}+(r-\mu p)\io u^{p+1}v^{-r}.\label{hieristalphawichtig}
 \end{align}
By Young's inequality, the second term can be estimated by  
 \begin{align*}
  &\left\lvert (p(p-1)\chi+2pr)\io u^{p-1}v^{-r-1}\nabla u \cdot \nabla v \right\rvert \\
  &\leq p(p-1)\io u^{p-2}v^{-r}|\nabla u|^2+\frac{(p(p-1)\chi+2pr)^2}{4p(p-1)}\io u^pv^{-r-2}|\nabla v|^2
 \end{align*}
 on  $(0,\Tmax)$. Thus, on $(0,\Tmax)$ we have 
 \begin{align*}
  \dt \io u^pv^{-r}\leq& \left(\frac{(p(p-1)\chi+2pr)^2}{4p(p-1)}-(pr\chi+r(r+1))\right)\io u^pv^{-r-2}|\nabla v|^2\\&+p\kappa\io u^pv^{-r}+(r-\mu p)\io u^{p+1}v^{-r}.
 \end{align*}
By choice of $r$, $r-\mu p<0$ and  $\frac{(p(p-1)\chi+2pr)^2}{4p(p-1)}-(pr\chi+r(r+1))<0$, because 
 \begin{align*}
  r\in(r_-,r_+) &\Rightarrow r^2-(p-1)r+\frac{p(p-1)^2\chi^2}{4}<0 \\
  &\Rightarrow p(p-1)\chi^2+4pr\chi +\frac{4r^2p}{p-1}<4pr\chi+4r^2+4r\\
  &\Rightarrow \frac{(\chi+\frac{2r}{p-1})^2p(p-1)}{4(pr\chi+r(r+1))}<1\\
  &\Rightarrow \frac{(p(p-1)\chi+2pr)^2}{4(pr\chi+r(r+1)}<p(p-1)\\
  &\Rightarrow \frac{(p(p-1)\chi+2pr)^2}{4p(p-1)}-(pr\chi+r(r+1))<0.
 \end{align*}
 We may conclude that 
 \begin{align*}
  \dt \io u^pv^{-r}\leq p\kappa\io u^pv^{-r}
 \end{align*}
 on  $(0,\Tmax)$ and hence 
 \begin{align*}
  \io u^p(\cdot,t)v^{-r}(\cdot,t) \leq \euler^{p\kappa t}\io u_0^pv_0^{-r}\quad\text{for every }t\in(0,\Tmax).
 \end{align*}
 Thus for every $T\in(0,\Tmax]$ with $T<\infty$ there is $C:=\euler^{p\kappa T}\io u_0^pv_0^{-r}$, such that 
 \begin{equation*}
  \io u^p(\cdot,t)v^{-r}(\cdot,t) \leq C \quad\text{for all }~t\in(0,T). \qedhere
 \end{equation*}
 \end{proof}

\begin{bemerkung}
The choice of $α=2$ in $f(u)\le κu-μu^{α}$ (cf. condition \eqref{cond:f} on $f$) can be seen to be important in \eqref{hieristalphawichtig}, where the last terms, which for general $α$ would be $r\io u^{p+1}v^{-r} - μp \io u^{p+α-1} v^{-r}$, only cancel in this case. The case of higher exponents $α$ is already covered by Remark \ref{rem:f}.
\end{bemerkung}

Aided by the previous lemma, we now can find a bound for $\norm{p}{u(\cdot,t)}$:
\begin{lemma}\label{up}
 L
et $p\in\kl{1,\tel{χ^2}}$ be such that $μp>\f{p-1}2$ and let $T\in(0,\Tmax], T<\infty$. Then there is ${C=C(T)>0}$ satisfying $\norm{p}{u(\cdot,t)}\leq C$ for all  $t\in(0,T)$.
\end{lemma}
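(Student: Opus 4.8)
\emph{Plan.} The idea is to read this off directly from Lemma~\ref{upvr} combined with the pointwise upper bound $v\le\norm{\infty}{v_0}$ provided by Lemma~\ref{vinfty}; the only genuine point is to check that the present hypotheses permit an admissible choice of the exponent $r$ appearing in Lemma~\ref{upvr}.

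First I would verify compatibility of the parameter ranges. Since $p\in\kl{1,\tel{\chi^2}}$, we have $p\chi^2\in(0,1)$ and hence $\sqrt{1-p\chi^2}\in(0,1)$, so that
\[
 0<r_-=\f{p-1}2\kl{1-\sqrt{1-p\chi^2}}<\f{p-1}2<\f{p-1}2\kl{1+\sqrt{1-p\chi^2}}=r_+.
\]
Combining this with the assumption $\mu p>\f{p-1}2$ yields $r_-<\f{p-1}2<\min\{r_+,\mu p\}$, so the interval $(r_-,\min\{r_+,\mu p\})$ is nonempty; for definiteness set $r:=\f{p-1}2$, which lies in it and is strictly positive. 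With this $p$ and $r$ all hypotheses of Lemma~\ref{upvr} are fulfilled, and for the given $T\in(0,\Tmax]$ with $T<\infty$ it provides $C_1=C_1(T)>0$ with $\io u^pv^{-r}\le C_1$ on $(0,T)$.

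It then remains to pass from this weighted estimate to the desired one. Since $v>0$ on $\Om\times[0,\Tmax)$ by Lemma~\ref{geq0} and $v\le\norm{\infty}{v_0}$ by Lemma~\ref{vinfty}, we have $v^r\le\norm{\infty}{v_0}^r$ pointwise, whence for every $t\in(0,T)$
\[
 \io u^p(\cdot,t)=\io u^p(\cdot,t)\,v^{-r}(\cdot,t)\,v^r(\cdot,t)\le\norm{\infty}{v_0}^r\io u^p(\cdot,t)v^{-r}(\cdot,t)\le\norm{\infty}{v_0}^rC_1.
\]
Taking $p$-th roots gives $\norm{p}{u(\cdot,t)}\le\kl{\norm{\infty}{v_0}^rC_1}^{1/p}=:C$ for all $t\in(0,T)$, which is the claim.

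\emph{Main obstacle.} The substance is entirely contained in Lemma~\ref{upvr} (and, behind it, Lemma~\ref{dtupvq}); here there is nothing harder than the elementary chain $r_-<\f{p-1}2<\min\{r_+,\mu p\}$, which is precisely where the hypotheses $p<\tel{\chi^2}$ and $\mu p>\f{p-1}2$ enter. The only thing to keep an eye on is the requirement $r>0$ in Lemma~\ref{upvr}, which here is automatic from $p>1$.
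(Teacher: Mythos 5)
Your proof is correct and follows the same route as the paper: verify that the parameter hypotheses make the interval $(r_-,\min\{r_+,\mu p\})$ nonempty, invoke Lemma~\ref{upvr}, and then remove the weight $v^{-r}$ using $v\le\norm{\infty}{v_0}$ from Lemma~\ref{vinfty}. The only cosmetic difference is that you make the explicit choice $r=\tfrac{p-1}{2}$ where the paper merely asserts existence of an admissible $r$.
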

\begin{proof}
 Let $r_\pm$ be as in the previous lemma, that is
 \begin{align*}
  r_{\pm}=\frac{p-1}{2}\left(1\pm\sqrt{1-p\chi^2}\right).
 \end{align*}
 Due to $p<\tel{\chi^2}$, apparently we have $1-p\chi^2>0$ and thus $r_-<r_+$.
 Since $μp > \f{p-1}2$, it is, moreover, ensured that $r_-<μp$, because $r_-<\f{p-1}2$.
%
Accordingly, there is some $r\in \left(r_{-},\min\{r_+,\mu p\}\right)$. For such a number $r$ by Lemma \ref{upvr} there is $c_1>0$ satisfying 
 \begin{align*}
  \io u^p(\cdot,t)v^{-r}(\cdot,t)\leq c_1 \quad\text{for all  }t\in(0,T).
 \end{align*}
 For  $t\in (0,T)$ it now holds true that
 \begin{align*}
  \norm{p}{u(\cdot,t)}
&=\left(\io u^p(\cdot,t) v^{-r}(\cdot,t)v^r(\cdot,t)\right)^{\tel{p}}\leq \left(\io u^p(\cdot,t) v^{-r}(\cdot,t) \norm{\infty}{v^r(\cdot,t)}\right)^{\tel{p}}\\
  &\leq \norm{\infty}{v(\cdot,t)}^\frac{r}{p}\left(\io u^p(\cdot,t) v^{-r}(\cdot,t) \right)^{\tel{p}}\leq \norm{\infty}{v_0}^\frac{r}{p} c_1^\tel{p}=:C,
 \end{align*}
 because by Lemma \ref{vinfty}, for every $t\in(0,\Tmax)$ we have $\norm{\infty}{v(\cdot,t)}\leq \norm{\infty}{v_0}$. 
\end{proof}

We can now use this to show global existence.
\begin{lemma}\label{lm:dasistthm1}
For $χ<\sqrt{\f2n}$ and $μ>\f{n-2}{2n}$, system \eqref{sys} has a global solution.  
\end{lemma}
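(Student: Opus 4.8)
The plan is to reduce globality, through the extensibility criterion \eqref{extensibilitycrit} of Theorem \ref{satz:lok}, to a single $L^p$-bound on $u$, and then to invoke the iteration already prepared in Lemma \ref{lm:bd:u:nhalbe}. Recall that if $\Tmax<\infty$ then necessarily $\norm{\infty}{u(\cdot,t)}+\Norm{W^{1,q}(\Om)}{v(\cdot,t)}\to\infty$ as $t\nearrow\Tmax$ for any fixed $q>n$; since $\Om$ is bounded, $W^{1,\infty}(\Om)\hookrightarrow W^{1,q}(\Om)$, so it suffices to rule this out by showing that for every finite $T\le\Tmax$ one has $\sup_{t\in(0,T)}\kl{\norm{\infty}{u(\cdot,t)}+\Norm{W^{1,\infty}(\Om)}{v(\cdot,t)}}<\infty$. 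By Lemma \ref{lm:bd:u:nhalbe} this is guaranteed once we exhibit a single exponent $p\ge1$ with $p>\f n2$ for which $\sup_{t\in(0,T)}\norm{p}{u(\cdot,t)}<\infty$, and Lemma \ref{up} delivers precisely such a bound as soon as $p\in\kl{1,\tel{\chi^2}}$ and $\mu p>\f{p-1}2$. So the whole argument comes down to choosing $p$ with
\[
 p>\f n2,\qquad 1<p<\tel{\chi^2},\qquad \mu p>\f{p-1}2.
\]

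The second step is to verify that the hypotheses $\chi<\sqrt{\f2n}$ and $\mu>\f{n-2}{2n}$ make such a $p$ available (here, as throughout this section, $n\ge2$). From $\chi<\sqrt{\f2n}$ we get $\tel{\chi^2}>\f n2\ge1$, so $\kl{\f n2,\tel{\chi^2}}$ is a nonempty subinterval of $(1,\infty)$ and every $p$ in it already satisfies the first two conditions. The third, $\mu p>\f{p-1}2$, I would rewrite as $p(1-2\mu)<1$. If $\mu\ge\tel2$ this holds for all $p>0$, so any $p\in\kl{\f n2,\tel{\chi^2}}$ does the job. If $\mu<\tel2$ it amounts to $p<\tel{1-2\mu}$, so I need the interval $\kl{\f n2,\min\set{\tel{\chi^2},\tel{1-2\mu}}}$ to be nonempty; since $\f n2<\tel{\chi^2}$ is already known, the only remaining point is $\f n2<\tel{1-2\mu}$, which after rearranging reads $n-2<2n\mu$, i.e.\ $\mu>\f{n-2}{2n}$ --- exactly the standing assumption. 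Picking any $p$ in this interval then meets all three requirements.

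Finally, fixing such a $p$ and an arbitrary finite $T\le\Tmax$: Lemma \ref{up} provides $C(T)>0$ with $\norm{p}{u(\cdot,t)}\le C(T)$ on $(0,T)$; since $p>\f n2$ and $p\ge1$, Lemma \ref{lm:bd:u:nhalbe} upgrades this to $\sup_{t\in(0,T)}\kl{\norm{\infty}{u(\cdot,t)}+\Norm{W^{1,\infty}(\Om)}{v(\cdot,t)}}<\infty$; and the extensibility criterion \eqref{extensibilitycrit} then forces $\Tmax=\infty$, so that $(u,v)$ is global.

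I do not expect a genuine obstacle in this final assembly: all of the analytic substance has been carried out in Lemmas \ref{lm:gradv}, \ref{lm:bd:u:nhalbe}, \ref{vpos}, \ref{upvr} and \ref{up}, and what is left is only the elementary compatibility check of the three conditions on $p$. The single point that wants a little care is the case distinction $\mu\ge\tel2$ versus $\mu<\tel2$, together with the observation that it is precisely the interplay of $\chi<\sqrt{\f2n}$ (which both opens $p$ up from above and keeps $p>\f n2$ feasible) and $\mu>\f{n-2}{2n}$ (which, in the delicate regime $\mu<\tel2$, keeps $\tel{1-2\mu}$ strictly above $\f n2$) that makes the choice of $p$ possible at all.
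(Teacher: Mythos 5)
Your argument is correct and is essentially the paper's own proof: both reduce the matter to Lemma \ref{up} (choice of a common exponent $p\in(\f n2,\tel{\chi^2})$ with $\mu p>\f{p-1}2$) followed by Lemma \ref{lm:bd:u:nhalbe} and the extensibility criterion \eqref{extensibilitycrit}. The only stylistic difference is that you verify the feasibility of $p$ by an explicit case distinction on $\mu\gtrless\tel2$, while the paper observes that $\f{p-1}{2p}$ equals $\f{n-2}{2n}<\mu$ at $p=\f n2$ and argues by continuity; both are equivalent.
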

\begin{proof}
 Because $χ<\sqrt{\f2n}$, the interval $\kl{\f n2,\f1{χ^2}}$ is nonempty. Since moreover $\f{\f n2-1}{2\cdot \f n2}=\f{n-2}{2n}<μ$, it is possible to find $p\in (\f n2,\f1{χ^2})$ such that $\f{p-1}{2p}<μ$, i.e. $μp>\f{p-1}2$. 
By Lemma \ref{up} for every such $p$ and every $T\in (0,\Tmax], T<\infty$ there is $C(T)>0$ with 
 \begin{align*}
  \norm{p}{u(\cdot,t)}\leq C(T) \quad\text{for }t\in(0,T).
 \end{align*}
If we suppose that $\Tmax$ were finite, we could, herein, choose $T=\Tmax$ and from Lemma \ref{lm:bd:u:nhalbe} infer that 
\[
 \sup_{t\in(0,\Tmax)}\kl{\norm{∞}{u(\cdot,t)}+\Norm{W^{1,\infty}(\Om)}{v(\cdot,t)}}<\infty, 
\]
in blatant contradiction to \eqref{extensibilitycrit}.
\end{proof}

\begin{proof}[Proof of Theorem \ref{thm:main:ge}]
 In fact, Theorem \ref{thm:main:ge} is identical to Lemma \ref{lm:dasistthm1}.
\end{proof}

\begin{bemerkung}
Note that this does not yet show that the solution is bounded, because the positive lower bound for $v$, crucial in the estimate  \eqref{hereweneedthelowerboundforv}, was not achieved independently of time, and even \textit{cannot} be obtained in a time-independent fashion, as the example of $u_0\equiv \f{κ}{μ}$, $v_0\equiv 1$ shows. 
\end{bemerkung}

Restricting the problem to the one-dimensional setting, we will deal with boundedness in the next section.

\section{The one-dimensional case}\label{kap:1d}

If we consider \eqref{sys} in a one-dimensional domain, i.e. if $\Om\subseteqℝ$ is an interval, we can prove stronger claims. Then, namely, the solution is not only global, independently of the positive parameters and initial data $u_0\in C^0(\overline{\Omega})$, $v_0\in W^{1,\infty}(\Omega)$ with $u_0\geq0$, $v_0>0$ in $\overline{\Omega}$, but even bounded. The system in this setting is the following: 
\begin{align}\label{sys1dv}
 u_t&=u_{xx}-\chi(\frac{u}{v} v_x)_x+ f(u) &&\text{in } \Om\times(0,\Tmax),\nn\\
 v_t&=v_{xx}-uv&&\text{in } \Om\times(0,\Tmax),\\
 u_x&=0=v_x &&\text{in } \partial\Om\times(0,\Tmax),\nn\\
 u(\cdot,0)&=u_0,\quad v(\cdot,0)=v_0&&\text{in } \Om.\nn
\end{align}
The goal of this section is to prove Theorem \ref{thm:main:bd1d}. 
As in Section \ref{sec:vpos}, we will use $w:=-\log\left(\frac{v}{\norm{\infty}{v_0}}\right)$ for the proof, and hence have to deal with 
\begin{align}\label{sys1d}
 u_t&=u_{xx}+\chi(u w_x)_x+ f(u) &&\text{in } \Om\times(0,\Tmax),\nn\\
 w_t&=w_{xx}-w_x^2+u  &&\text{in } \Om\times(0,\Tmax),\\
 u_x&=0=w_x &&\text{in } \partial \Om\times(0,\Tmax),\nn\\
 u(\cdot,0)&=u_0, \quad w(\cdot,0)=w_0:=-\log\left(\frac{v_0}{\norm{\infty}{v_0}}\right) &&\text{in } \Om.\nn
\end{align}
We will, as before, denote by $(u,w)$ the local solution to \eqref{sys1d} for given, fixed initial data $u_0$, $v_0$ as in \eqref{cond:init}, for a function $f$ satisfying \eqref{cond:f} and for parameters $κ\ge 0$, $\mu>0$, $χ>0$, on which we pose no further conditions. 

Let us prepare the proof of Theorem \ref{thm:main:bd1d}, which will essentially rely on several differential inequalities and, again, semigroup estimates, with the following few lemmata. The first of these relies on Lemma \ref{odi-spatiotemporal} to turn the spatio-temporal estimate resulting from the presence of the logistic source into various pieces of boundedness information concerning derivatives of $w$. 
\begin{lemma}\label{1dw}
 There is $C>0$ such that 
\[\norm{2}{w_x(\cdot,t)}\leq C,\quad \inttnullt \io w_{xx}^2\leq C\quad \text{ and } \inttnullt \io w_x^6\leq C\] hold for all $t\in(0,\Tmax)$.
\end{lemma}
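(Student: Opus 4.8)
The plan is to test the $w$-equation of \eqref{sys1d} with $-w_{xx}$ and exploit the one-dimensionality to close a differential inequality for $\io w_x^2$ whose right-hand side is controlled by the spatio-temporal $L^2$-estimate for $u$ from Lemma \ref{u2}. Multiplying $w_t=w_{xx}-w_x^2+u$ by $-w_{xx}$ and integrating by parts (using $w_x=0$ on $\partial\Om$), I would obtain
\[
 \f12\dt\io w_x^2 + \io w_{xx}^2 = \io w_x^2 w_{xx} - \io u\,w_{xx}.
\]
For the first term on the right, in one space dimension $\io w_x^2 w_{xx} = \f13\io (w_x^3)_x = 0$ by the boundary conditions, so it simply vanishes -- this is the crucial simplification that one dimension affords. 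The second term is absorbed by Young's inequality, $-\io u w_{xx}\le \f12\io w_{xx}^2 + \f12\io u^2$, leaving
\[
 \dt\io w_x^2 + \io w_{xx}^2 \le \io u^2.
\]
Since $\inttnullt\io u^2\le C$ by Lemma \ref{u2}, Lemma \ref{odi-spatiotemporal} (applied with $y(t)=\io w_x^2(\cdot,t)$, $a=1$ after using a Poincaré-type inequality, or more carefully using that $\io w_{xx}^2$ controls $\io w_x^2$ up to the mean of $w_x$, which is zero) yields the bound $\norm{2}{w_x(\cdot,t)}\le C$ for all $t\in(0,\Tmax)$. Integrating the differential inequality over $((t-1)_+,t)$ then gives $\inttnullt\io w_{xx}^2\le \io w_x^2(\cdot,(t-1)_+) + \inttnullt\io u^2 \le C$, the second claimed estimate.

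For the third estimate, I would use the one-dimensional Gagliardo--Nirenberg inequality. Since $\norm{2}{w_x(\cdot,t)}$ is now bounded uniformly in time and $w_x$ has zero mean, GN gives $\norm{6}{w_x}^6 \le C\norm{2}{w_{xx}}^{?}\norm{2}{w_x}^{?}$ with the interpolation exponents chosen so that $\norm{6}{w_x}^6\le C\norm{2}{w_{xx}}^2\norm{2}{w_x}^4 + C\norm{2}{w_x}^6 \le C\kl{\io w_{xx}^2 + 1}$ using the uniform $L^2$-bound on $w_x$. Integrating over $((t-1)_+,t)$ and invoking the just-established space-time bound for $\io w_{xx}^2$ produces $\inttnullt\io w_x^6\le C$.

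The only mild obstacle is bookkeeping in the Gagliardo--Nirenberg step (getting the exponents right and handling the zero-mean normalization so that the full norm, not just the seminorm, is controlled) and justifying that the mean of $w_x$ is indeed zero -- which follows at once from the Neumann condition $w_x=0$ on $\partial\Om$ in the interval case. Everything else is a direct computation; no new ideas beyond the one-dimensional cancellation $\io w_x^2 w_{xx}=0$ and the already-available estimates are needed.
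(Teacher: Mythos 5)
Your proposal follows essentially the same route as the paper's proof: test the $w$-equation with $-w_{xx}$, observe the one-dimensional cancellation $\io w_x^2 w_{xx}=\frac13\io(w_x^3)_x=0$, apply Young's inequality to obtain $\dt\io w_x^2+\io w_{xx}^2\le\io u^2$, invoke a Poincar\'e inequality for $w_x$ to feed the ODI into Lemma~\ref{odi-spatiotemporal} with the spatio-temporal bound from Lemma~\ref{u2}, integrate in time to control $\inttnullt\io w_{xx}^2$, and finish with the one-dimensional Gagliardo--Nirenberg inequality for $\inttnullt\io w_x^6$.

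One small correction to your bookkeeping remark: the mean of $w_x$ is \emph{not} zero in general, since $\io w_x = w(b)-w(a)$ on an interval $(a,b)$, and nothing forces $w$ to agree at the endpoints. What you do have --- and what is actually the right justification for the inequality $\io w_x^2\le C_P\io w_{xx}^2$ --- is that $w_x$ vanishes at both endpoints by the Neumann condition $\partial_\nu w=0$; this puts $w_x$ in $H^1_0(\Omega)$ and the Poincar\'e--Friedrichs inequality applies directly. You cite exactly this boundary fact, so your argument is sound; just avoid framing it as a zero-mean condition, which is a different hypothesis and does not hold here.
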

\begin{proof}
Multiplying the second equation of (\ref{sys1d}) by $-w_{xx}$ and integrating over $\Omega$, from integration by parts we obtain 
 \begin{align*}
  \tel{2}\dt\io w_x^2+\io w_{xx}^2=-\io w_{xx}u\quad\text{on  }(0,\Tmax),
 \end{align*}
 because $-\io w_t w_{xx}=\io w_{xt} w_x=-\tel{2}\dt \io w_x^2$ and $\io w_x^2 w_{xx}=\io \left(\tel{3}w_x^3\right)_x=0$ due to the homogeneous Neumann boundary conditions. From Young's inequality we obtain 
\begin{align*}
 -\io w_{xx}u\leq\io |w_{xx}||u|\leq \tel{2}\io w_{xx}^2+\tel{2}\io u^2 \qquad \text{on } (0,\Tmax),
\end{align*}
hence 
\begin{align}\label{eq:wx}
 \dt \io w_x^2+\io w_{xx}^2\leq\io u^2.
\end{align}
Poincar\'{e}'s inequality yields $C_P>0$ such that 
\begin{align*}
 \io w_x^2\leq C_P \io w_{xx}^2,\quad\text{i.e. }\io w_{xx}^2\geq\tel{C_P}\io w_x^2 \quad\text{on  }(0,\Tmax).
\end{align*}
Therefore, on $(0,\Tmax)$ we have
\begin{align*}
 \dt \io w_x^2+\tel{C_P}\io w_x^2\leq \dt \io w_x^2+\io w_{xx}^2\leq\io u^2.
\end{align*}
Thus we have derived an ordinary differential inequality of the same form as in Lemma \ref{odi-spatiotemporal} if we set $y(t)=\io w_x^2(\cdot,t)$ and $h(t)=\io u^2(\cdot,t)$, where the condition $\inttnullt h(s)\diff s\le c_1$ is satisfied for some $c_1>0$ for all $t\in(0,\Tmax)$ according to Lemma \ref{u2}. 
Hence there is $c_2>0$ with $\io w_x^2\leq c_2$ on  $(0,\Tmax)$. 
Integration of (\ref{eq:wx}) with respect to time shows that for all $t\in(0,\Tmax)$,  
\begin{align*}
 \inttnullt\io w_{xx}^2\leq \inttnullt\io u^2-\io w_x^2(\cdot,t)+\io w_x^2(\cdot,t_0)\leq c_1+0+c_2=:c_3.
\end{align*}
By the Gagliardo--Nirenberg inequality 
 there are $c_4>0$ and $c_5>0$ satisfying 
\begin{align*}
 \io w_x^6 \leq c_4 \left(\io w_{xx}^2\right)\left(\io w_x^2\right)^2+c_5\left(\io w_x^2\right)^3 \;\text{throughout } (0,\Tmax),
\end{align*}
which, together with the previously shown, results in 
\begin{align*}
 \inttnullt\io w_x^6 \leq c_2^2c_3c_4+c_2^3c_5=:c_6\quad\text{for }t\in(0,\Tmax).
\end{align*}
Setting $C:=\max\left\{\sqrt{c_2}, c_3, c_6\right\}$ gives the claim. 
\end{proof}

By similar reasoning, we can derive finiteness of $\sup_t \norm{2}{u(\cdot,t)}$, which gives much stronger information than the previously known bound for $\Norm{L^2(\Om\times(t_0,t))}{u}$. Along the way we collect some further spatio-temporal bounds for $u$ and its derivative.

\begin{lemma}\label{1du}
 There is $C>0$ such that \[\norm{2}{u(\cdot,t)}\leq C\] holds for all $t\in(0,\Tmax)$ and that, furthermore, \[\inttnullt\io u_x^2\leq C\quad \text{ and }\quad \inttnullt\io u^6\leq C\] hold for all $t\in(0,\Tmax)$.
\end{lemma}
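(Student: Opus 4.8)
The plan is to mimic the energy-type argument from Lemma \ref{1dw}, but now testing the first equation of \eqref{sys1d} against $u$ in order to control $\io u^2$. First I would multiply $u_t=u_{xx}+\chi(uw_x)_x+f(u)$ by $u$ and integrate over $\Om$; integration by parts and the Neumann conditions give $\frac12\dt\io u^2+\io u_x^2=-\chi\io u_x u w_x + \io u f(u)$, and by \eqref{cond:f} the last term is bounded above by $\kappa\io u^2-\mu\io u^3$. The cross term $-\chi\io u_x u w_x$ I would absorb with Young's inequality into a fraction of $\io u_x^2$ at the cost of a term $C\io u^2 w_x^2$, which I then estimate by Hölder's inequality (in the one-dimensional setting) as $\le C(\io u^3)^{2/3}(\io w_x^6)^{1/3}$; a further Young's inequality lets me dominate this by $\frac{\mu}2\io u^3 + C\io w_x^6$. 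This yields a differential inequality of the shape $\dt\io u^2+\io u_x^2+\frac{\mu}2\io u^3\le \kappa\io u^2 + C\io w_x^6$ on $(0,\Tmax)$.

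Next I would convert this into the desired bounds. Using $\io u^2\le |\Om|^{1/3}(\io u^3)^{2/3}$ and Young's inequality, the term $\kappa\io u^2$ is also dominated by $\frac{\mu}4\io u^3 + C$, so I arrive at $\dt\io u^2 + \io u_x^2 + \frac{\mu}4\io u^3 \le C(1+\io w_x^6)$. Since $\io u^2$ itself is bounded above by $\varepsilon\io u^3 + C_\varepsilon$, I can split off a term $a\io u^2$ on the left for some $a>0$ and land in the setting of Lemma \ref{odi-spatiotemporal} with $y(t)=\io u^2(\cdot,t)$ and $h(t)=C(1+\io w_x^6(\cdot,t))$; here $\inttnullt h(s)\diff s\le C$ for all $t\in(0,\Tmax)$ by Lemma \ref{1dw} (and the trivial bound $\inttnullt 1\le 1$). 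Lemma \ref{odi-spatiotemporal} then gives $\io u^2\le C$ uniformly in $t\in(0,\Tmax)$, which is the first assertion. Feeding this uniform bound back, integrating the differential inequality over $((t-1)_+,t)$ and using that $\io u^2$ is now bounded at both endpoints, I obtain $\inttnullt\io u_x^2\le C$ and $\inttnullt\io u^3\le C$ for all $t\in(0,\Tmax)$. Finally, the Gagliardo--Nirenberg inequality in dimension one, applied to $u$ in the form $\io u^6\le c(\io u_x^2)(\io u^2)^2 + c(\io u^2)^3$ (using $\io u$ bounded, via Lemma \ref{u}, to control the lower-order piece — or simply the $L^2$ bound just derived), combined with the uniform $L^2$-bound and the spatio-temporal $\io u_x^2$-bound, yields $\inttnullt\io u^6\le C$.

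I expect the main technical obstacle to be handling the chemotaxis cross term $-\chi\io u_x u w_x$: one must balance the Young splittings so that the $\io u_x^2$ part stays on the good side, the $\io u^3$ part is controlled by the absorptive $-\mu\io u^3$ coming from the logistic source, and the leftover is exactly of the integrable type $\io w_x^6$ supplied by Lemma \ref{1dw}. The choice $f(u)\le\kappa u-\mu u^2$ is crucial here: testing against $u$ produces $\io u f(u)\le\kappa\io u^2-\mu\io u^3$, and it is precisely the cubic absorption that matches the cubic term generated by the Hölder estimate of $\io u^2 w_x^2$ in the one-dimensional case. Everything else — the integrations by parts, the Poincaré-type or Young-type lower-order reductions, and the Gagliardo--Nirenberg step — is routine, and I would not belabour those computations.
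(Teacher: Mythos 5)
Your proposal is correct and follows essentially the same route as the paper: testing against $u$, absorbing the cross term via Young (pointwise in the paper, via Hölder-then-Young in yours), using the cubic absorption from the logistic term, reducing to Lemma \ref{odi-spatiotemporal} via Lemma \ref{1dw}, integrating in time for the gradient bound, and closing with Gagliardo--Nirenberg for $\int u^6$. The only deviations are minor and harmless: the paper handles $\kappa\io u^2$ via $\frac{\mu}{2}\io u^3 + \frac{\kappa^2}{2\mu}\io u$ together with the $L^1$-bound of Lemma \ref{u}, and produces the $a\io u^2$ term on the left via Poincar\'e's inequality rather than your interpolation $\io u^2\le\varepsilon\io u^3 + C_\varepsilon$.
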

\begin{proof}
 Multiplying (\ref{sys1d}) by $u$, integrating over $\Omega$, from integration by parts we see that 
 \begin{align*}
  \tel{2}\dt \io u^2\le -\io u_x^2-\chi\io u_xuw_x+\kappa\io u^2-\mu\io u^3\quad\text{on  }(0,\Tmax).
 \end{align*}
Young's inequality shows that 
 \begin{align*}
  \chi \io u_xuw_x\leq \tel{2}\io u_x^2+\frac{\chi^2}{2}\io u^2w_x^2\leq \tel{2}\io u_x^2+\frac{\mu}{2}\io u^3+c_1\io w_x^6
 \end{align*}
with some $c_1>0$, and 
 \begin{align*}
  \kappa\io u^2\leq \frac{\mu}{2}\io u^3+\frac{\kappa^2}{2\mu}\io u.
 \end{align*}
Thus, on  $(0,\Tmax)$ we obtain 
 \begin{align}\label{eq:u}
  \dt \io u^2+\io u_x^2\leq 2c_1\io w_x^6+m\frac{\kappa^2}{\mu}
 \end{align}
 with $m$ as in Lemma \ref{u} 
and, due to Poincar\'e's inequality, hence 
 \begin{align*}
  \dt \io u^2+\tel{2C_P}\io u^2\leq \dt \io u^2+\io u_x^2+\bar{u}^2|\Omega|\leq 2c_1\io w_x^6+m\frac{\kappa^2}{\mu}+m^2|\Omega|.
 \end{align*}
 According to Lemma \ref{odi-spatiotemporal} with $h(t)=2c_1\io w_x^6(\cdot,t)+m\frac{\kappa^2}{\mu}+m^2|\Omega|$, which by Lemma \ref{1dw} satisfies the condition $\inttnullt h(s)\diff s\leq c_2$ for all $t\in(0,\Tmax)$, there is $c_3>0$ satisfying $\io u^2(\cdot,t)\leq c_3$ for every $t\in(0,\Tmax)$. \\
Integrating inequality \eqref{eq:u}, for $t\in(0,\Tmax)$, from Lemma \ref{1dw} and with $c_4$ being the constant taken from Lemma \ref{1dw}, we obtain 
 \begin{align*}
  \inttnullt\io u_x^2&\leq 2c_1\inttnullt\io w_x^2+m\frac{\kappa^2}{\mu}-\io u^2(\cdot, t)+\io u^2(\cdot,t_0)\\&\leq 2c_1c_4+m\frac{\kappa^2}{\mu}+c_3=:c_5.
 \end{align*}
 Application of the Gagliardo--Nirenberg inequality 
 together with the above provides us with a positive number $c_6>0$ such that 
 \begin{align*}
  \inttnullt\io u_x^6\leq c_6\quad\text{for }t\in(0,\Tmax).
 \end{align*}
 The definition $C:=\max\left\{\sqrt{c_3},c_5,c_6\right\}$ finally ensures the validity of the claim.
\end{proof}\vspace{0.1cm}

The next step is to ascertain control on derivatives of $w$ uniformly in time. 

\begin{lemma}
 There is $C>0$ with \[\io w_x^4(\cdot,t)\leq C\] for all $t\in(0,\Tmax)$.
\end{lemma}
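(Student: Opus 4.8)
The plan is to test the second equation of \eqref{sys1d} against a power of $w_x$ — concretely against $-(w_x^3)_x = -3w_x^2 w_{xx}$, or equivalently to differentiate the $w$-equation in $x$ and multiply by $w_x^3$ — in order to produce a differential inequality for $\io w_x^4$. Writing $z:=w_x$, differentiation of $w_t = w_{xx} - w_x^2 + u$ in $x$ gives $z_t = z_{xx} - 2zz_x + u_x$. Multiplying by $z^3$ and integrating over $\Om$, the diffusion term yields $-3\io z^2 z_x^2 = -\tfrac{1}{3}\io ((z^2)_x)^2 \le 0$ after integration by parts (the boundary terms vanish since $w_x=0$ on $\partial\Om$, which in one space dimension forces the relevant boundary contributions to drop out), the transport term $-2\io z^4 z_x = -\tfrac{2}{5}\io (z^5)_x = 0$ by the boundary conditions, and what remains is $\io u_x z^3$. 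Hence
\begin{align*}
 \tel{4}\dt\io w_x^4 + \tfrac{1}{3}\io ((w_x^2)_x)^2 \le \io u_x w_x^3 \qquad \text{on } (0,\Tmax).
\end{align*}

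Next I would absorb the right-hand side. By Young's inequality $\io u_x w_x^3 \le \tel{2}\io u_x^2 + \tel{2}\io w_x^6$, and the term $\io w_x^6$ is controlled by the dissipation term $\io ((w_x^2)_x)^2 = 4\io w_x^2 w_{xx}^2$ together with a lower-order term via the Gagliardo–Nirenberg inequality (in the form $\io w_x^6 = \Norm{L^6}{w_x}^6 \lesssim (\io ((w_x^2)_x)^2)(\io w_x^2)\cdot(\ldots) + (\ldots)$, using the already-known bound $\Norm{L^2(\Om)}{w_x}\le C$ from Lemma \ref{1dw}), so that after choosing constants appropriately one arrives at
\begin{align*}
 \dt\io w_x^4 + \tfrac{1}{3}\io ((w_x^2)_x)^2 \le \io u_x^2 + c_1
\end{align*}
for some $c_1>0$. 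Applying Poincaré's inequality to $\io w_x^4$ (again using $\io w_x^2\le C$ to handle the nonzero mean of $w_x^2$) converts the dissipation term into $\tfrac{1}{c_2}\io w_x^4$, putting the estimate into the shape required by Lemma \ref{odi-spatiotemporal} with $y(t)=\io w_x^4(\cdot,t)$ and $h(t)=\io u_x^2(\cdot,t)+c_1$. The spatio-temporal bound $\inttnullt\io u_x^2\le C$ furnished by Lemma \ref{1du} guarantees the hypothesis $\inttnullt h(s)\diff s\le C'$, and Lemma \ref{odi-spatiotemporal} then yields the claimed uniform-in-time bound $\io w_x^4(\cdot,t)\le C$.

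The main obstacle is the treatment of the cross term $\io u_x w_x^3$: one must be careful that the $\io w_x^6$ contribution is genuinely dominated by the available dissipation $\io w_x^2 w_{xx}^2$ after the Gagliardo–Nirenberg interpolation, and that the residual (non-dissipative) Gagliardo–Nirenberg term is integrable in time — which is exactly where the $\io w_x^2$ bound from Lemma \ref{1dw} and the $\inttnullt\io u_x^2$ bound from Lemma \ref{1du} are needed. Everything else is routine integration by parts exploiting the one-dimensional Neumann conditions and a final appeal to Lemma \ref{odi-spatiotemporal}.
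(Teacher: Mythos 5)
Your argument follows the paper's skeleton: derive the ODI $\dt\io w_x^4 = -12\io w_x^2 w_{xx}^2 + 4\io w_x^3 u_x$ by testing (or differentiating) the $w$-equation, split the source by Young's inequality into $\io u_x^2 + \io w_x^6$, convert the dissipation to $\io w_x^4$ by Poincar\'e, and finish with Lemma~\ref{odi-spatiotemporal}. The one genuine divergence is in the treatment of $\io w_x^6$: you absorb it into the dissipation $\io ((w_x^2)_x)^2 = 4\io w_x^2 w_{xx}^2$ via Gagliardo--Nirenberg using only $\io w_x^2\le C$, whereas the paper keeps $\io w_x^6$ on the right-hand side and invokes the ready-made spatio-temporal bound $\inttnullt\io w_x^6\le C$ from Lemma~\ref{1dw}. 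Both close the argument; the paper's is shorter because that bound was prepared for exactly this purpose, while yours is a bit more self-contained (it makes the $L^6$-in-space-time part of Lemma~\ref{1dw} superfluous, at the cost of an extra interpolation). Two details to fix in your write-up. First, $\io ((z^2)_x)^2 = 4\io z^2 z_x^2$, so the diffusion term is $-3\io z^2 z_x^2 = -\tfrac34 \io ((z^2)_x)^2$, not $-\tfrac13\io((z^2)_x)^2$. Second, and this is the crux of your variant, the Gagliardo--Nirenberg step must be stated with a \emph{sublinear} power on the dissipation factor, e.g.
\begin{align*}
\io w_x^6 \;\le\; c\left(\io ((w_x^2)_x)^2\right)^{2/3}\left(\io w_x^2\right)^{5/3} + c\left(\io w_x^2\right)^{3},
\end{align*}
so that, after bounding $\io w_x^2$, Young's inequality gives $\tfrac12\io w_x^6\le \varepsilon\io ((w_x^2)_x)^2 + C_\varepsilon$ with $\varepsilon$ arbitrarily small. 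As written in your proposal, with exponent one on $\io ((w_x^2)_x)^2$, ``choosing constants appropriately'' would not suffice to absorb the term into the dissipation, so making the exponent $2/3$ explicit is essential.
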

\begin{proof}
We will again derive an ODI. Differentiation of $\io w_x^4$ and integration by parts lead to 
 \begin{align*}
  \dt \io w_x^4=-12\io w_x^2w_{xx}^2-\frac{8}{5}\io (w_x^5)_x+4\io w_x^3u_x=-12\io w_x^2w_{xx}^2+4\io w_x^3u_x 
 \end{align*}
 on  $(0,\Tmax)$. By Young's inequality, we have 
 \begin{align*}
  4\io w_x^3u_x\leq 2\io w_x^6+2\io u_x^2 \qquad \text{on } (0,\Tmax).
 \end{align*}
 Moreover, from Poincar\'e's inequality we may infer 
 \begin{align*}
  \io w_x^4=\io \left(w_x^2\right)^2\leq C_P \io \left((w_x^2)_x\right)^2=4C_P\io w_x^2w_{xx}^2 \qquad \text{on } (0,\Tmax).
 \end{align*}
Combining these, on  $(0,\Tmax)$ we obtain 
\begin{align*}
 \dt \io w_x^4+\frac{3}{C_P}\io w_x^4\leq \dt \io w_x^4+12\io w_x^2w_{xx}^2\leq 2\io w_x^6+2\io u_x^2.
\end{align*}
Due to the estimates for $\inttnullt\io w_x^6$ and $\inttnullt\io u_x^2$ for $t\in(0,\Tmax)$, of Lemma \ref{1dw} and Lemma \ref{1du}, respectively, taken together with Lemma \ref{odi-spatiotemporal}, we can conclude the proof.
\end{proof}
With this, it is possible to prove Theorem \ref{thm:main:bd1d}.
\begin{proof}[Proof of Theorem \ref{thm:main:bd1d}]
 By the variation-of-constants formula and \eqref{cond:f}, for every $t\in(0,\Tmax)$,
 \begin{align*}
  u(\cdot,t)=& \euler^{tΔ}u_0+\chi\int_0^t\euler^{(t-s)Δ}(uw_x)_x(\cdot,s)\diff s\nonumber
+\int_0^t\euler^{(t-s)Δ}f(u(\cdot,s))\diff s\nonumber\\
  \leq& \euler^{tΔ}u_0+\chi\int_0^t\euler^{(t-s)Δ}(uw_x)_x(\cdot,s)\diff s+\kappa\int_0^t\euler^{(t-s)Δ}u(\cdot,s)\diff s\;\text{in } \Om 
 \end{align*} 
 and hence 
 \begin{align}\label{1dinfty}
 \norm{\infty}{u(\cdot,t)}\leq&\norm{\infty}{\euler^{tΔ}u_0}+\chi\int_0^t\norm{\infty}{\euler^{(t-s)Δ}(uw_x)_x(\cdot,s)}\diff s
+\kappa\int_0^t\norm{\infty}{\euler^{(t-s)Δ}u(\cdot,s)}\diff s.
 \end{align} 
 Now we want to use the preceding two lemmata to prove boundedness of the map $(0,\Tmax)\!\ni t\mapsto \norm{\infty}{u(\cdot,t)}$. They provide constants $c_1, c_2>0$ with $\norm{2}{u(\cdot,t)}\leq c_1$ and $\norm{4}{w_x(\cdot,t)}\leq c_2$ for all $t\in(0,\Tmax)$.
Semigroup estimates (cf. \cite[Lemma 1.3]{lplq}) imply the existence of some constant $c_3>0$ (independent of $t$), such that the terms in \eqref{1dinfty} fulfil 
\begin{align*}
 \norm{\infty}{\euler^{tΔ}u_0}&\leq c_3\norm{\infty}{u_0},\\
 \norm{\infty}{\euler^{(t-s)Δ}u}&\leq c_3\left(1+(t-s)^{-\tel{4}}\right)\norm{2}{u}\leq c_1c_3 \left(1+(t-s)^{-\tel{4}}\right),\\
 \norm{\infty}{\euler^{(t-s)Δ}(uw_x)_x}&\leq c_3\left(1+(t-s)^{-\tel{2}-\frac{3}{8}}\right)\norm{\frac{4}{3}}{uw_x}\\
 &\leq c_3\left(1+(t-s)^{-\frac{7}{8}}\right)\norm{2}{u}\norm{4}{w_x}\\
 &\leq c_1c_2c_3\left(1+(t-s)^{-\frac{7}{8}}\right)\qquad\qquad\text{for }t\in(0,\Tmax),\; s\in(0,t], 
 \end{align*}
where in the penultimate step we used Hölder's inequality with exponent $\frac{3}{2}$. For $t\in (0,\min\left\{1,\Tmax\right\})$ we hence have 
 \begin{align*}
   \norm{\infty}{u(\cdot,t)}\leq&c_3\norm{\infty}{u_0}+\chi c_1c_3\int_0^t \left(1+(t-s)^{-\tel{4}}\right)\diff s + \kappa c_1c_2c_3 \int_0^t \left(1+(t-s)^{-\frac{7}{8}}\right) \diff s\\
   \leq & c_3\norm{\infty}{u_0}+\frac{7}{3}\chi c_1c_3+9=:c_4, 
 \end{align*} 
 because $\int_0^t (1+(t-s))^{-\tel4} ds = t+\frac43t^{\f34} \le \f73$ and $\int_0^t (1+(t-s)^{-\f78} ds=t+8t^{\f18}\le 9$. 
 If $\Tmax>1$, let $\tau\in\left[1,\min\left\{10,\frac{\Tmax}{2}\right\}\right)$, $t\in[0,\Tmax-\tau)$. Then 
 \begin{align*}
 \norm{\infty}{u(\cdot,t+\tau)}\leq&\norm{\infty}{\euler^{\tauΔ}u(\cdot,t)}+\chi\int_0^\tau\norm{\infty}{\euler^{(\tau-s)Δ}(uw_x)_x(\cdot,t+s)}\diff s \nn \\&+\kappa\int_0^\tau\norm{\infty}{\euler^{(\tau-s)Δ}u(\cdot,t+s)}\diff s.
 \end{align*} 
Lemma \ref{u} provides a constant $m$ such that, according to the semigroup estimates of Lemma 1.3 (i) of \cite{lplq} we have 
 \begin{align*}
  \norm{\infty}{\euler^{\tauΔ}u(\cdot,t)}\leq c_3(1+\tau^{-\tel{2}})\norm{1}{u(\cdot,t)}\leq 2c_3m.
 \end{align*}
The other terms can be estimated as above and we obtain 
 \begin{align*}
  \norm{\infty}{u(\cdot,t+\tau)}\leq& 2c_3m++\chi c_1c_3\int_0^\tau (1+(\tau-s)^{-\tel{4}})\diff s\\
   &+ \kappa c_1c_2c_3 \int_0^\tau (1+(\tau-s)^{-\frac{7}{8}}) \diff s
=:c_5
 \end{align*}
  and therefore $\norm{\infty}{u(\cdot,t)}\leq c_5$ for all $t\in [1,\Tmax).$

  If we define $C:=\max\{c_4,c_5\}$, we accordingly obtain $\norm{\infty}{u(\cdot,t)}\leq C$ for every $t\in(0,\Tmax)$; hence $u$ is bounded. 

  Since, furthermore, by Lemma \ref{v} for $q>n$ we have 
  \begin{align*}
  \norm{q}{v(\cdot,t)}\leq\norm{q}{v_0}\quad\text{for all }t\in(0,\Tmax), 
  \end{align*}
  and, due to Lemma \ref{lm:gradv}, moreover 
  \begin{align*}
  \norm{q}{\nabla v(\cdot,t)}\leq c_6\left(1+\sup_{s\in(0,\Tmax)}\norm{q}{u(\cdot,s)}\right)
  \end{align*}
  holds for some $c_6>0$, which ensures that for any $q>n$ also the function 
  \begin{align*}
  (0,\Tmax)\ni t\mapsto \wnorm{v(\cdot,t)}
  \end{align*}
 is bounded, the extensibility property from Theorem \ref{satz:lok} shows that the solution $(u,v)$ is global.
\end{proof}
%
%
%
%
%
%
%
%


\begin{appendix}
\section{Proof of the local existence theorem (Theorem \ref{satz:lok})}\label{sec:locex-proof}
\begin{proof}
 Because $v_0\in W^{1,\infty}(\Omega)$ and $\Omega$ is bounded, we apparently have $v_0\in W^{1,q}(\Omega)$. 
 We let $S$, $f$ and $g$ be as in (\ref{susw}). Then $g\in C_{\text{loc}}^{1-}(\overline{\Omega}\times[0,\infty)\times\R^2)$ and $g(x,t,u,0)=0$. 

 For every \mbox{$k\in\N\cap(\max\left\{\norm{\infty}{u_0},\wnorm{v_0}\right\},\infty)$} we define 
 \begin{align*}
 \eta_k:=\inf_{x\in\Omega}v_0(x)\euler^{-(k+1)^2}.
 \end{align*}
Moreover, we let $ζ_k\colon \R\rightarrow [0,1]$ be a smooth, monotone decreasing function with $ζ_k(\frac{\eta_k}{2})=1$ and $ζ_k(\eta_k)=0$. Furthermore, we set 
 \begin{align*}
  S_k&\colon \overline{\Omega}\times[0,\infty)\times\R^2\rightarrow \R, \qquad& S_k(x,t,u,v)&:=\begin{cases}
                                                                                      \frac{2\chi}{\eta_k},&v\leq\frac{\eta_k}{2},\\
                                                                                      ζ_k(v)\frac{2\chi}{\eta_k}+(1-ζ_k(v))\frac{\chi}{v},&v>\frac{\eta_k}{2},
                                                                                     \end{cases}
 \end{align*}
 and
 \begin{align*}
  h_k&\colon\overline{\Omega}\times[0,\infty)\times\R^2\rightarrow \R, \qquad & h_k(x,t,u,v)&:=\begin{cases}
										     f'(0) u,&u<0,\\ 
                                                                                     f(u),&0\leq u\leq k+1,\\
                                                                                     \kappa(k+1)-\mu(k+1)^2\\\quad+f'(k+1)(u-(k+1)),&u>k+1.
                                                                                    \end{cases}
 \end{align*}
Then by definition we have $S_k\in C_{\text{loc}}^{2}(\overline{\Omega}\times[0,\infty)\times\R^2)$, $h_k(x,t,0,v)=0$ and $h_k\in C^{1-}(\overline{\Omega}\times[0,\infty)\times\R^2)$, since $h_k\in C^1(\overline{\Omega}\times[0,\infty)\times\R^2)$ with bounded derivative. 
Hence  $S_k$, $h_k$ and $g$ satisfy the conditions of Lemma  \ref{satz:survey}. 
Accordingly, there are $T_{\max,k}\in(0,\infty]$ and a unique pair of functions $(u_k,v_k)$ with $u_k \in C^0(\overline{\Omega}\times[0,T_{\max,k}))\cap C^{2,1}(\overline{\Omega}\times(0,T_{\max,k}))$ and $v_k\in C^0(\overline{\Omega}\times[0,T_{\max,k}))\cap C^{2,1}(\overline{\Omega}\times(0,T_{\max,k}))\cap L_{\text{loc}}^{\infty}([0,T_{\max,k});W^{1,q}(\Omega))$, such that $(u_k,v_k)$ is a classical solution to 
\begin{align*}
  \left.\begin{array}{lr}
   u_t=Δ u -\nabla\cdot\left(uS_k(x,t,u,v)\nabla v\right)+h_k(x,t,u,v),&x\in\Omega, ~t>0,\\
   v_t=Δ v-v+g(x,t,u,v),&x\in\Omega, ~t>0,\\
   \frac{\partial u}{\partial \nu}=\frac{\partial v}{\partial \nu}=0,&x\in\partial\Omega, ~t>0,\\
   u(x,0)=u_0(x), v(x,0)=v_0(x),&x\in\Omega,
  \end{array}\right\}(P_k)
 \end{align*}
and that either $T_{\max,k}=\infty$ or $\norm{\infty}{u_k(\cdot,t)}+\wnorm{v_k(\cdot,t)}\rightarrow \infty$ for ${t\nearrow T_{\max,k}}$.\\
Comparison with the lower solution $\ubar{u}=0$ due to the nonnegativity of $u_0$ by the comparison theorem \ref{vs} (note also Remark \ref{bem:vs} concerning its applicability) immediately yields $u_k\geq 0$, so that the definition of $h_k$ for $u<0$ plays no further role. 
We now let 
\begin{align*}
t_k:=\min\left\{k,\inf\{t\in(0,T_{\max,k})\big\vert\sup_{x\in\Omega}u_k(x,t)>k \quad\text{or } \wnorm{v_k(\cdot,t)}>k\}\right\}.
 \end{align*}
 Then $t_k< T_{\max,k}$, for if $T_{\max,k}\leq k$, then by 
\begin{align*}
\norm{\infty}{u_k(\cdot,t)}+\wnorm{v_k(\cdot,t)}\rightarrow \infty\quad\text{for }t\nearrow T_{\max,k}
\end{align*}
it holds that 
\begin{align*}
 \inf&\left\{t\in(0,T_{\max,k})\big\vert\sup_{x\in\Omega}u_k(x,t)>k \quad\text{oder } \wnorm{v_k(\cdot,t)}>k\right\}\\
 &\leq  \inf\left\{t\in(0,T_{\max,k})\vert\norm{\infty}{u_k(\cdot,t)}+\wnorm{v_k(\cdot,t)}>k\right\}< T_{\max,k}.
\end{align*}
By definition of $t_k$, on $\Omega\times[0,t_k]$, we have $u_k\leq k$. \\
\underline{Claim 1:} On $\Omega\times[0,t_k]$ we have $v_k\geq \eta_{k-1}$.\\
\textit{Proof of claim 1:} If we let $\uvk(x,t):=\inf_{y\in\Omega}v_0(y)\euler^{-kt}$, due to $u_k\leq k$ on  $\Omega\times[0,t_k]$ and $Δ \uvk=0$ we have:
\begin{align*}
 \ubar{v}_{k_t}&=-k\uvk\leq Δ \uvk-u_k \uvk&&\text{in }\Omega\times(0,t_k],\\
 \partial_{\nu}\uvk&=0\quad&&\text{on  }\partial\Omega,\\
 \uvk(x,0)&=\inf_{y\in\Omega}v_0(y)\leq v_0(x)\quad&&\text{for all } x\in\Omega.
\end{align*}
Thus by the comparison theorem \ref{vs}, for all  $x\in \Omega$, $t\leq t_k\colon$
\begin{align*}
 v_k(x,t)\geq \uvk(x,t)=\inf_{y\in\Omega}v_0(y)\euler^{-kt}\geq \inf_{y\in\Omega}v_0(y)\euler^{-k^2}=\eta_{k-1}.
\end{align*}
\underline{Claim 2:} On $\Omega\times(0,t_k)$ the function $(u_k,v_k)$ coincides with $(u_{k+1},v_{k+1})$.\\
\textit{Proof of Claim 2:} On $\Omega\times(0,t_{k+1})$ it holds that $v_{k+1}\geq \eta_k\geq\eta_{k+1}$, therefore 
\begin{align*} 
S_{k+1}(x,t,u_{k+1},v_{k+1})=S_k(x,t,u_{k+1},v_{k+1})=S(x,t,u_{k+1},v_{k+1}). 
\end{align*} 
Moreover $u_{k+1}\leq k+1$ and thus 
\begin{align*}
h_{k+1}(x,t,u_{k+1},v_{k+1})=h_k(x,t,u_{k+1},v_{k+1})=h(x,t,u_{k+1},v_{k+1}).
\end{align*}
Consequently, $(u_{k+1},v_{k+1})$ solves $(P_k)$ on  $\Omega\times(0,\min\{t_k,t_{k+1}\})$ and therefore, by uniqueness of the solution, has to satisfy $(u_k,v_k)=(u_{k+1},v_{k+1})$ on  $\Omega\times(0,\min\{t_k,t_{k+1}\})$.\\
For contradiction, we now assume that $t_k>t_{k+1}$. Since $t_k\leq k$, then  $t_{k+1}\neq k+1$ would have to hold and thus $\sup_x u_{k+1}(x,t_{k+1})=k+1$ or $\wnorm{ v_{k+1}(\cdot, t_{k+1})}=k+1$. But this would entail the existence of $\tilde{t}<t_{k+1}$ with $\sup_x u_{k+1}(x,\tilde{t})=k$ or  $\wnorm{ v_{k+1}(\cdot, \tilde{t})}=k$, which would imply $t_k\leq \tilde{t}<t_{k+1}$ in contradiction to the assumption.\\
Hence, $t_k\leq t_{k+1}$ and $(u_k,v_k)=(u_{k+1},v_{k+1})$ on  $\Omega\times(0,t_k)$.\\
Because $(t_k)_k$ is a monotone increasing sequence, the limit $\Tmax:=\lim_{k\to \infty} t_k\in (0,\infty]$ exists. 
Therefore, the functions $u$ and $v$ given by 
\begin{align*}
 u(x,t)=u_k(x,t), &\quad x\in\Om, \text{  for any $k$ such that }t<t_k,\; \\
 v(x,t)=v_k(x,t), &\quad x\in\Om, \text{  for any $k$ such that }t<t_k,\;
\end{align*}
are well-defined on $\Omega\times [0,\Tmax)$. On  $\Omega\times(0,t_k)$ we already know  $S_k=S$ and $h_k=h$,  thus $(u,v)$ is indeed a classical solution of \eqref{sys} on  $\Omega\times(0,\Tmax)$, because the regularity of $u_k$ and $v_k$ directly implies that of $u$ and $v$. 
If, moreover, $\Tmax<\infty$, for all $k>\Tmax$:
\begin{align*}
 \limsup_{t\nearrow \Tmax}\left(\norm{\infty}{u(\cdot,t)}+\wnorm{ v(\cdot,t)}\right)\geq k
\end{align*}
and hence $\limsup_{t\nearrow \Tmax}\kl{\norm{\infty}{u(\cdot,t)}+\wnorm{v(\cdot,t)}}=\infty$. 
If there were a second pair $(\tilde{u},\tilde{v})$ of functions solving \eqref{sys} on $\Omega\times (0,\Tmax)$, by uniqueness of solutions to $(P_k)$, this would have to coincide with $(u_k,v_k)$ on $(0,t_k)$, which proves uniqueness of solutions.
\end{proof}

\section{A comparison theorem}\label{sec:comparison}
 An extremely useful tool for the proof not only of the existence assertion, but also the most basic solution properties of Section \ref{kap:eigenschaften} is the following comparison theorem. Comparison theorems of this form are often used, but seldom proven or referenced -- in fact, we did not find a version applicable to (the first equation of) the present system with Neumann boundary conditions in the literature; \cite[Prop. 52.7]{quittner_souplet}, which seemed the closest match, additionally requires $\nabla \bar u, \nabla \ubar u\in L^{∞}(\Om\times(0,T))$ --, for the sake of completeness, we therefore have decided to include a suitable version and its proof: 

\begin{satz}[Comparison theorem]\label{vs}
 Let $\Omega\subseteq \R^n$ be a bounded domain with smooth boundary, $T>0$ and let $f\in C^0(\overline{\Omega}\times[0,T)\times\R)$ satisfy a local Lipschitz condition in the following sense: For every compact $K\subseteq \R$ there be $L(K)>0$ such that 
 \begin{align*}
  |f(x,t,u)-f(x,t,v)|\leq L(K)|u-v| \quad\text{for all } x\in\overline{\Omega}, t\in [0,T), u,v\in K.
 \end{align*}
 Moreover let  $b\in C^1(\overline{\Omega}\times(0,T),\R^n)\cap L_{\text{loc}}^{\infty}(\overline{\Omega}\times[0,T),\R^n)$ fulfil $b\cdot \nu=0$ on  $\partial\Omega\times(0,T)$. If then $\bar{u}$ and $\ubar{u}$ are an upper and a lower solution, respectively, to the equation 
 \begin{align*}
  u_t=Δ u+\nabla\cdot(b(x,t) u(x,t))+f(x,t,u(x,t)), x\in\Omega, t\in (0,T),
 \end{align*}
 i.e. if $\bar{u}$ and $\ubar{u}$ belong to $C^0(\overline{\Omega}\times[0,T))\cap C^{2,1}(\Omega\times(0,T))$ and satisfy 
 \begin{align*}
  \bar{u}_t\geq Δ \bar{u}+\nabla \cdot(b(x,t)\bar{u}(x,t))+f(x,t,\bar{u}(x,t)) \quad\text{for }(x,t)\in\Omega\times(0,T),\\
  \ubar{u}_t\leq Δ \ubar{u}+\nabla \cdot(b(x,t)\ubar{u}(x,t))+f(x,t,\ubar{u}(x,t)) \quad\text{for }(x,t)\in\Omega\times(0,T),
 \end{align*}
 and if 
 \begin{align*}
  \partial_\nu \bar{u}\geq \partial_\nu \ubar{u} \quad\text{on }\partial\Omega\times(0,T)
 \end{align*}
and
\begin{align*}
  \bar{u}(x,0)\geq\ubar{u}(x,0) \quad\text{for all } x \in\Omega,
 \end{align*}
 are fulfilled, then 
 \begin{align*}
  \bar{u}(x,t)\geq\ubar{u}(x,t) \quad\text{for all } x\in\Omega,~t\in[0,T).
 \end{align*}
\end{satz}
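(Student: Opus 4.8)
The plan is to derive the conclusion from a standard $L^2$-energy estimate for the difference $z:=\ubar u-\bar u$ together with Gronwall's lemma, being careful to use only the hypotheses that are actually available --- in particular boundedness, but not Lipschitz continuity, of the drift $b$ near $t=0$, which is exactly the point that distinguishes this statement from off-the-shelf versions such as \cite[Prop.~52.7]{quittner_souplet}.

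First I would fix $\tau\in(0,T)$ and work on $\overline\Omega\times[0,\tau]$: there $\bar u,\ubar u$ take values in some compact $K\subset\R$ (being continuous), $|b|\le M$ for some $M>0$ by the local $L^\infty$-assumption, and $L:=L(K)$ is the corresponding Lipschitz constant. Subtracting the two differential inequalities shows that $z$ satisfies
\begin{align*}
 z_t\le\Delta z+\nabla\cdot(b(x,t)z)+f(x,t,\ubar u)-f(x,t,\bar u)\qquad\text{in }\Omega\times(0,T),
\end{align*}
together with $\partial_\nu z\le0$ on $\partial\Omega\times(0,T)$ and $z(\cdot,0)\le0$. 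Multiplying by $z_+:=\max\{z,0\}$ (admissible since $s\mapsto(s_+)^2$ is $C^1$) and integrating over $\Omega$, I would treat the right-hand side termwise: the diffusion term becomes, after integration by parts, $-\io|\nabla z_+|^2+\int_{\partial\Omega}z_+\partial_\nu z\le-\io|\nabla z_+|^2$, the boundary integral being nonpositive by the ordering of the normal derivatives; in the transport term one integration by parts gives $\io z_+\nabla\cdot(bz)=-\io z\,(b\cdot\nabla z_+)$, the boundary term vanishing because $b\cdot\nu=0$, and here it is crucial \emph{not} to integrate by parts a second time (which would require a bound on $\nabla\cdot b$, unavailable as $t\to0$), but instead to use Young's inequality to bound this by $\tfrac12\io|\nabla z_+|^2+\tfrac{M^2}{2}\io z_+^2$; finally the local Lipschitz estimate, applied on $\{z>0\}$, yields $\io z_+\,(f(x,t,\ubar u)-f(x,t,\bar u))\le L\io z_+^2$.

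Putting the pieces together produces $\tfrac{d}{dt}\,\tfrac12\io z_+^2\le\bigl(\tfrac{M^2}{2}+L\bigr)\io z_+^2$ on $(0,T)$. Integrating this ODI from $t_0>0$ to $t\le\tau$, then letting $t_0\searrow0$ and using continuity of $z$ at $t=0$ together with $z(\cdot,0)\le0$ to see that $\io z_+^2(\cdot,t_0)\to0$, one arrives at $\io z_+^2(\cdot,t)\le(M^2+2L)\int_0^t\io z_+^2(\cdot,s)\,ds$, whence $z_+\equiv0$ on $\overline\Omega\times[0,\tau]$ by Gronwall's lemma; since $\tau\in(0,T)$ was arbitrary, $\ubar u\le\bar u$ on $\Omega\times[0,T)$.

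I expect the genuine difficulty to lie in the regularity bookkeeping rather than in the estimates: justifying the spatial integration by parts when $\bar u$ and $\ubar u$ are only continuous up to $\partial\Omega$ (so that one must argue that the boundary integral $\int_{\partial\Omega}z_+\partial_\nu z$ is meaningful and nonpositive, using that the normal derivatives are assumed to exist there), and differentiating $\io z_+^2$ in time when $z_t$ is controlled only on the open set $\Omega\times(0,T)$ --- which is why above I integrate the ODI over $[t_0,t]$ and send $t_0\to0$ rather than differentiating at the endpoint. These points are handled by the usual approximation arguments; the one structural observation that really matters is that the merely bounded (not Lipschitz) drift $b$ forces the transport term to be absorbed via Young's inequality rather than by a second integration by parts.
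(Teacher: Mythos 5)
Your proof is essentially the same as the paper's: you test the differential inequality for $z=\ubar u-\bar u$ with $z_+$, integrate by parts using $\partial_\nu z\le 0$ and $b\cdot\nu=0$, absorb the drift term via Young's inequality so that only the $L^\infty_{\mathrm{loc}}$ bound on $b$ is needed, control the reaction term by the local Lipschitz constant, and conclude by Gronwall after sending the lower time endpoint to $0$. The only difference is one of packaging: the paper tests with the smooth convex approximations $\phi_\delta$ of $\xi\mapsto\xi_+$ and passes to the limit by monotone convergence, whereas you test directly with $z_+$ and defer the resulting regularity bookkeeping (integration by parts with $z_+$ only Lipschitz, differentiation of $\io z_+^2$ in time) to a standard approximation argument that you correctly identify but do not carry out.
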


\begin{proof}
 Let $ζ\in C^{\infty}(\R)$ be such that $ζ'\geq 0$ on $\R$, $ζ\equiv 0$ on $(-\infty,\tel{2}]$ and $ζ\equiv 1$ on $[1,\infty)$. For $\delta\in (0,1)$ define  
 \begin{align*}
  \phi_{\delta}(\xi):=\int_0^{\xi}ζ\left(\frac{\sigma}{\delta}\right)\diff \sigma, ~\xi\in\R.
 \end{align*}
 Then $\phi_{\delta}\in C^{\infty}(\R)$ is nonnegative with $\phi_{\delta}'\geq 0$, $\phi_{\delta}''\geq 0$, $\phi_{\delta}'\equiv 1$ on  $[\delta, \infty)$ and $\phi_{\delta}\equiv 0$ on $(-\infty,\frac{\delta}{2})$. In particular, $\phi_{\delta}(\xi)\lvert \xi\rvert=\phi_{\delta}(\xi)\xi$. 
 Moreover, as $\delta \searrow 0$, we have 
 \begin{align*}
  \phi_{\delta}'(\xi)\nearrowζ_{(0,\infty)}(\xi)\quad\text{as well as }\phi_{\delta}(\xi)\nearrow\xi_{+}\quad\text{for all }\xi\in\R,
 \end{align*}
 where $\xi_+:=\max\set{\xi, 0}$. 
 For arbitrary $\hat{T}\in(0,T)$ we define $K:=\ubar{u}(\overline{\Omega}\times[0,\hat{T}])\cup\bar{u}(\overline{\Omega}\times[0,\hat{T}])$. Due to the local Lipschitz continuity of $f$ with respect to $u$, there is $L(\hat{T})>0$ satisfying  
 \begin{align}\label{lip}
  |f(x,t,r)-f(x,t,s)|\leq L(\hat{T})|r-s| \quad\text{for all }r,s\in K, (x,t)\in\overline{\Omega}\times[0,\hat{T}).
 \end{align}
 Furthermore, there is $C(\hat{T})>0$ with $b(x,t)\leq C(\hat{T})$ for $(x,t)\in\Omega\times(0,\hat{T}]$.\\
 In $\Omega\times(0,\hat{T})$, the difference $\ubar{u}-\bar{u}$ obeys  
 \begin{align*}
  \left(\ubar{u}-\bar{u}\right)_t\leq Δ\left(\ubar{u}-\bar{u}\right)+\nabla\cdot(b(x,t)\left(\ubar{u}-\bar{u}\right))+f(x,t,\ubar{u})-f(x,t,\bar{u}).
 \end{align*}
 Multiplication with the nonnegative function $\phi_\delta\left(\ubar{u}-\bar{u}\right)$ and integration over $\Omega$ by (\ref{lip}) leads to 
 \begin{align*}
  \io\phi_\delta\left(\ubar{u}-\bar{u}\right)\left(\ubar{u}-\bar{u}\right)_t\leq& \io \phi_\delta\left(\ubar{u}-\bar{u}\right)Δ\left(\ubar{u}-\bar{u}\right)+\io\phi_\delta\left(\ubar{u}-\bar{u}\right)\nabla\cdot\left(b(x,t)\left(\ubar{u}-\bar{u}\right)\right)\\
  &+L(\hat{T})  \io\underbrace{\phi_\delta\left(\ubar{u}-\bar{u}\right)|\ubar{u}-\bar{u}|}_{=\phi_\delta\left(\ubar{u}-\bar{u}\right)(\ubar{u}-\bar{u})} \quad\text{in }(0,\hat{T}).
 \end{align*}
 Integration by parts shows 
 \begin{align*}
  \io \phi_\delta\left(\ubar{u}-\bar{u}\right)Δ\left(\ubar{u}-\bar{u}\right) &=-\io\nabla(\phi_\delta\left(\ubar{u}-\bar{u}\right))\cdot\nabla\left(\ubar{u}-\bar{u}\right)+\int_{\partial\Omega}\phi_\delta\left(\ubar{u}-\bar{u}\right)(\partial_\nu\ubar{u}-\partial_\nu\bar{u})\\
  &\leq -\io\phi_\delta'\left(\ubar{u}-\bar{u}\right)|\nabla\left(\ubar{u}-\bar{u}\right)|^2\quad\text{in }(0,\hat{T}),
 \end{align*}
 because $\phi_\delta\geq 0$ and $\partial_\nu\ubar{u}-\partial_\nu\bar{u}\leq 0$ in $\partial\Omega$.\\
Additionally, integration by parts, aided by Young's inequality, $b\cdot\nu=0$ on $\partial\Omega$ and the estimate for $b$, has us conclude 
 \begin{align*}
  \io &\phi_\delta\left(\ubar{u}-\bar{u}\right)\nabla\cdot(b(x,t)\left(\ubar{u}-\bar{u}\right))\\
  &= -\io\phi_\delta'\left(\ubar{u}-\bar{u}\right)\nabla\left(\ubar{u}-\bar{u}\right)b(x,t)\left(\ubar{u}-\bar{u}\right) +\int_{\partial\Omega}\phi_\delta\left(\ubar{u}-\bar{u}\right)\left(\ubar{u}-\bar{u}\right)b(x,t)\cdot\nu\\
  &\leq\tel{2}\io|\nabla\left(\ubar{u}-\bar{u}\right)|^2\phi_\delta'\left(\ubar{u}-\bar{u}\right)+ \frac{C(\hat{T})^2}{2}\io \phi_\delta'\left(\ubar{u}-\bar{u}\right)\left(\ubar{u}-\bar{u}\right)^2 \quad\text{in }(0,\hat{T}).
 \end{align*}
Together, in $(0,\hat{T})$ this ensures 
 \begin{align*}
  \io \phi_\delta\left(\ubar{u}-\bar{u}\right)\left(\ubar{u}-\bar{u}\right)_t\leq& -\tel{2}\io |\nabla\left(\ubar{u}-\bar{u}\right)|^2\phi_\delta'\left(\ubar{u}-\bar{u}\right)\\
  &+ \frac{C(\hat{T})^2}{2}\io \phi_\delta'\left(\ubar{u}-\bar{u}\right)\left(\ubar{u}-\bar{u}\right)^2 
  +L(\hat{T})\io \phi_\delta\left(\ubar{u}-\bar{u}\right)\left(\ubar{u}-\bar{u}\right)\\
  \leq &\;\, \frac{C(\hat{T})^2}{2}\io \phi_\delta'\left(\ubar{u}-\bar{u}\right)\left(\ubar{u}-\bar{u}\right)^2+L(\hat{T})\io \phi_\delta\left(\ubar{u}-\bar{u}\right)\left(\ubar{u}-\bar{u}\right)
 \end{align*}
 and by integration over $(0,t)$ for any $t\in(0,\hat{T}]$ we arrive at 
 \begin{align}\label{eq:vs}
  \int_0^t \io \phi_\delta\left(\ubar{u}-\bar{u}\right)\left(\ubar{u}-\bar{u}\right)_t 
  &\leq \frac{C(\hat{T})^2}{2}\int_0^t\!\!\io \phi_\delta'\left(\ubar{u}-\bar{u}\right)\left(\ubar{u}-\bar{u}\right)^2+L(\hat{T})\int_0^t\!\!\io \phi_\delta\left(\ubar{u}-\bar{u}\right)\left(\ubar{u}-\bar{u}\right).
 \end{align}
We now define 
\begin{align*}
  \Phi_{\delta}(y):=\int_0^y\phi_\delta(s)\diff s
 \end{align*}
 for $y\in\R$. Because of $\phi_{\delta}(s)\nearrow s_+$ and monotonicity of the integral, Beppo-Levi's theorem in the limit  $\delta\searrow 0$ shows 
 \begin{align}\label{eq:vs_phi}
  \Phi_{\delta}(y)\nearrow\int_0^y s_+\diff s=\tel{2}y_+^2.
 \end{align}
 With Fubini's theorem and the substitution $y(\cdot,t)=\uunten(\cdot,t)-\uoben(\cdot,t)$ we can rewrite the left-hand side of  (\ref{eq:vs}) as 
 \begin{align*}
   \int_0^t &\io \phi_\delta\left(\ubar{u}-\bar{u}\right)\left(\ubar{u}-\bar{u}\right)_t=\io \int_0^{\uunten(\cdot,t)-\uoben(\cdot,t)} \phi(y)\diff y=\io \Phi_{\delta}(\uunten(\cdot,t)-\uoben(\cdot,t)),
 \end{align*}
 for $t\in(0,\hat{T})$, where by (\ref{eq:vs_phi}) and Beppo-Levi's theorem for all $t\in(0,\hat{T})$ we have 
 \begin{align*}
  \io \Phi_{\delta}(\uunten(\cdot,t)-\uoben(\cdot,t))\to \tel{2}\io \left(\uunten(\cdot,t)-\uoben(\cdot,t)\right)_+^2 \quad\text{as }\delta \searrow 0.
 \end{align*}
Due to 
\begin{align*}
  \phi_\delta\left(\ubar{u}-\bar{u}\right)\nearrow\left(\ubar{u}-\bar{u}\right)_+ \quad\text{and } \phi_\delta'\left(\ubar{u}-\bar{u}\right)\nearrowζ_{(0,\infty)}\left(\ubar{u}-\bar{u}\right)\quad\text{as }\delta\searrow0,
 \end{align*}
i.e.
\begin{align*}
  \phi_\delta'\left(\ubar{u}-\bar{u}\right)(\uunten-\uoben)\nearrow (\uunten-\uoben)_+\quad\text{as }\delta\searrow0,
 \end{align*}
another application of Beppo-Levi's theorem, this time on the right-hand side of (\ref{eq:vs}), shows that in the limit  $\delta\searrow 0$
 \begin{align*}
  \frac{C(\hat{T})^2}{2}\int_0^t\!\!\io \phi_\delta'\left(\ubar{u}-\bar{u}\right)\left(\ubar{u}-\bar{u}\right)^2&+L(\hat{T})\int_0^t\!\!\io \phi_\delta\left(\ubar{u}-\bar{u}\right)\left(\ubar{u}-\bar{u}\right)\\
  \to \frac{C(\hat{T})^2}{2}\int_0^t\!\!\io \left(\ubar{u}-\bar{u}\right)_+^2&+L(\hat{T})\int_0^t\!\!\io \left(\ubar{u}-\bar{u}\right)_+^2.
 \end{align*}
 In conclusion, from (\ref{eq:vs}), for $t\in(0,\hat{T})$ we have obtained 
 \begin{align*}
  \tel{2}\io (\uunten(\cdot,t)-\uoben(\cdot,t))_+^2\leq \frac{C(\hat{T})^2+2L(\hat{T})}{2}\int_0^t\!\!\io \left(\uunten-\uoben\right)_+^2.
 \end{align*}
An application of Grönwall's inequality 
 yields 
\(
  \io (\uunten-\uoben)_+^2\leq 0\) on $(0,\hat{T})$ 
 and hence $(\uunten-\uoben)_+\equiv 0$, that is, $\ubar{u}\leq\bar{u}$. Since $\hat{T}<T$ was arbitrary, we may conclude  $\ubar{u}\leq\bar{u}$ in $\overline{\Omega}\times[0,T)$.
\end{proof}

\begin{bemerkung}\label{bem:vs}
 It is not immediately clear that the comparison theorem \ref{vs} can be applied to the system under consideration, because for the first equation it is not a priori known whether 
 \begin{align*}
 b\in C^1(\overline{\Omega}\times(0,T),\R^n)\cap L_{\text{loc}}^{\infty}(\overline{\Omega}\times[0,T),\R^n).
 \end{align*}
 The following considerations, however, will ensure its applicability and we have hence used Theorem \ref{vs} throughout the article without repeating these arguments explicitly. 
 At first, the equation 
 \begin{align*}
  v_t=Δ v -uv \quad\text{on  }\Omega\times(0,\Tmax)
 \end{align*}
 is studied. Here we interpret $u$ as given function from Theorem \ref{satz:lok} (or, in the proof of Theorem \ref{satz:lok}, from Lemma \ref{satz:survey}), i.e.  $u\in C^0(\overline{\Omega}\times[0,\Tmax))\cap C^{2,1}(\overline{\Omega}\times(0,\Tmax))$. With $b\equiv 0$ and $f(x,t,v):=-u(x,t)v$ the conditions of the comparison theorem are fulfilled due to continuity of $u$. For given $T<\Tmax$ one can find $C>0$ such that $u\leq C$ on $\overline{\Omega}\times[0,T]$, because $u$ is continuous. Hence  $\ubar{v}=(\inf_{x\in\Omega}v_0(x))\euler^{-Ct}$ is a subsolution of the equation, since 
 \begin{align*}
  \ubar{v}_t=-C\ubar{v}=Δ \ubar{v}-C\ubar{v}\leq Δ \ubar{v}-u\ubar{v}.
 \end{align*}
Furthermore we have 
 \begin{align*}
  \partial_{\nu} \ubar{v}=0 \quad\text{on }\partial\Omega\times(0,T) \quad\text{on } \ubar{v}(\cdot,0)\leq v_0 \quad\text{on  }\Omega\times(0,T).
 \end{align*}
 Therefore by comparison with $\bar{v}=v$ it follows that $v(x,t)\geq (\inf_{x\in\Omega}v_0(x))\euler^{-Ct}$ on $\Omega\times(0,T)$.\\
 For $T<\Tmax$ we thus can consider the first equation 
 \begin{align*}
  u_t=Δ u-\chi\kreuz+f(u) \quad\text{in }\Omega\times(0,T)
 \end{align*}
with the given function $v$. Here we now have  
\( 
 f(x,t,u)=f(u)
\) 
with the differentiable function $f$, which apparently is continuous and locally Lipschitz continuous with respect to $u$, and 
 \begin{align*}
 b(x,t)=\chi\frac{\nabla v(x,t)}{v(x,t)}.
 \end{align*}
 Due to $v\geq (\inf_{x\in\Omega}v_0(x))\euler^{-CT}>0$, $b$ is well-defined. 
Since furthermore $v\in C^{2,1}(\overline{\Omega}\times(0,\Tmax))$, even $b\in C^1(\overline{\Omega}\times(0,T),\R^n)$. As  $\partial_\nu v=0$ on $\partial\Omega$, it follows that $b\cdot \nu=0$ on $\partial\Omega$. Due to continuity of $u$ and $v$ on $\overline{\Omega}\times[0,T]$, there is $M>0$ satisfying $\norm{\infty}{uv}\leq M$. Correspondingly, for any $\hat{t}\leq T$,   
 \begin{align*}
  \norm{\infty}{\frac{\nabla v(\cdot,t)}{v(\cdot,t)}}\leq (\inf v_0)^{-1}\euler^{Ct}\norm{\infty}{\nabla v(\cdot,t)}\quad\text{for all } t\in[0,\hat{t}],
 \end{align*}
 where, due to the variation-of-constants formula and semigroup estimates (\cite[Lemma 1.3 (ii)]{lplq}), 
 \begin{align*}
  \norm{\infty}{\nabla v(\cdot,t)}\leq &\norm{\infty}{\nabla \euler^{tΔ}v_0}+\int_0^t\norm{\infty}{\nabla \euler^{(t-s)Δ}(uv)(\cdot,s)}\diff s\\
  \leq & c_1 \norm{\infty}{\nabla v_0}+c_2 \int_0^t\left(1+(t-s)^{-\tel{2}}\right)\norm{\infty}{(uv)(\cdot,s)}\diff s \\
  \leq & c_1 \norm{\infty}{\nabla v_0}+c_2 M (\hat{t}+2\hat{t}^{\tel{2}})=:K<\infty \quad\text{for all } t\in[0,\hat{t}],
 \end{align*}
 because $v_0\in W^{1,\infty}(\Omega)$.\\
 Thereby, indeed, $b\in C^1(\overline{\Omega}\times(0,T),\R^n)\cap L_{\text{loc}}^{\infty}(\overline{\Omega}\times[0,T),\R^n)$, and the comparison theorem becomes applicable to suitable sub- and supersolutions of this equation.
\end{bemerkung}

\end{appendix}

{\footnotesize
\bibliographystyle{abbrv}

}
\end{document}